\documentclass[10pt]{article}
\pdfoutput=1
\usepackage[utf8]{inputenc}
\usepackage[english]{babel}
\usepackage[T1]{fontenc}
\usepackage{framed}
\usepackage{hyperref}
\usepackage{lmodern}
\usepackage{amsmath}
\usepackage{amsfonts}
\usepackage{amssymb} 
\usepackage{amsthm}
\usepackage{mdframed}
\usepackage{mathtools}
\usepackage{tikz-cd}
\usepackage{tikzit}
\usepackage{cmll}
\usepackage{graphicx}
\usepackage{stmaryrd}
\usepackage{adjustbox}
\usepackage{xcolor}
\usepackage{epigraph}
\usepackage{ebproof}
\usepackage{geometry}
\usepackage{enumitem}
\hypersetup{
    colorlinks = true,
    linkcolor = blue,
    citecolor = blue,
    urlcolor = red
    }
    
\usepackage[
        noabbrev,
        capitalise,
        nameinlink,
    ]
    {cleveref}
   
\setlength{\textwidth}{6.5in}
\setlength{\textheight}{8.5in}
\setlength{\footskip}{0.8in}
\setlength{\unitlength}{1mm}
\setlength{\evensidemargin}{0pt}
\setlength{\oddsidemargin}{0pt}
\setlength{\topmargin}{-0.5in}

\usepackage{etoolbox}
\apptocmd{\thebibliography}{\raggedright}{}{}
\usepackage{tikz}
\usepackage{lipsum}


\tikzstyle{blanc}=[fill=white, draw=black, shape=circle]
\tikzstyle{dot}=[draw=black, shape=circle, fill=black, minimum width=0.1cm]
\tikzstyle{box}=[fill=none, draw=black, shape=rectangle, minimum width=4cm, minimum height=2cm]
\tikzstyle{box-less-wide}=[fill=none, draw=black, shape=rectangle, minimum width=2.5cm, minimum height=1cm]
\tikzstyle{little box}=[fill=white, draw=black, shape=rectangle]
\tikzstyle{little box red}=[fill=white, draw=red, shape=rectangle, line width=0.35mm]
\tikzstyle{middle box}=[fill=white, draw=black, shape=rectangle, minimum height=1cm, minimum width=2cm]
\tikzstyle{middlebox2}=[fill=white, draw=black, shape=rectangle, minimum height=1cm, minimum width=4.5cm]
\tikzstyle{7}=[fill=white, draw=black, shape=rectangle, minimum height=1cm, minimum width=7cm]
\tikzstyle{box1}=[fill=none, draw=black, shape=rectangle, minimum width=4.2cm, minimum height=1.5cm]
\tikzstyle{box2}=[fill=none, draw=black, shape=rectangle, minimum width=6cm, minimum height=1.5cm]
\tikzstyle{box3}=[fill=white, draw=black, shape=rectangle, minimum width=3.2cm, minimum height=1.5cm]
\tikzstyle{box4}=[fill=white, draw=black, shape=rectangle, minimum width=7cm, minimum height=1.5cm]

\tikzstyle{dirgreen}=[->, fill=none, draw=green]
\tikzstyle{red}=[-, draw=red, line width=0.35mm]
\tikzstyle{green}=[-, draw=green]
\tikzstyle{redn}=[-, fill=none, draw=red]
\tikzstyle{magenta}=[-, draw=magenta]
\tikzstyle{blue}=[-, draw=blue]
\tikzstyle{dirmagenta}=[->, draw=magenta]
\tikzstyle{dirblue}=[->, draw=blue]
\tikzstyle{dirred}=[->, draw=red]

\newtheorem{theorem}{Theorem}
\newtheorem{corollary}[theorem]{Corollary}
\newtheorem{proposition}[theorem]{Proposition}
\newtheorem{lemma}[theorem]{Lemma}
\newtheorem{definition}[theorem]{Definition}
\newtheorem{remark}[theorem]{Remark}
\newtheorem{example}[theorem]{Example}

\crefname{subsection}{subsection}{subsections}
\Crefname{subsection}{Subsection}{Subsections}

\usepackage[nottoc]{tocbibind}
  
\begin{document}

\title{A bialgebraic characterization of symmetric powers in $\mathbb{Q}_{\ge 0}$-linear symmetric monoidal categories}
\date{}
\author{Jean-Baptiste Vienney}

\maketitle

\abstract{In any symmetric monoidal category, the $n$-th (co)equalizer symmetric power of an object $A$ is the (co)equalizer of all the permutations from $A^{\otimes n}$ to itself. If the symmetric monoidal category is $\mathbb{Q}_{\ge 0}$-linear, that is, enriched over $\mathbb{Q}_{\ge 0}$-modules, the notions of $n$-th equalizer symmetric power and $n$-th coequalizer symmetric power are equivalent. In this context, the $n$-th symmetric power of $A$ can be described as the intermediate object $A_n$ in a splitting of the idempotent $\frac{1}{n!}\underset{\sigma \in S_n}{\sum}\sigma\colon A^{\otimes n} \rightarrow A^{\otimes n}$. We define a permutation splitting as a countable family of such splittings.

The main goal of this paper is to prove two theorems. The first theorem exhibits in any $\mathbb{Q}_{\ge 0}$-linear symmetric monoidal category a bijection between operations making a graded object $(A_n)_{n \ge 0}$ into a permutation splitting and operations making this graded object into a bialgebraic structure that we call a binomial bimonoid. Binomial bimonoids can be defined in any additive symmetric monoidal category. The second theorem shows that, in any $\mathbb{Q}_{\ge 0}$-linear symmetric monoidal category, the biassociativity and bicommutativity axioms may be omitted from the definition of a binomial bimonoid. 

We then show that being a binomial bimonoid in a $\mathbb{Q}_{\ge 0}$-linear symmetric monoidal category is a property: two binomial bimonoids are isomorphic whenever their underlying graded objects are isomorphic. This result does not extend to arbitrary additive symmetric monoidal categories since both the one-variable polynomial algebra and the one-variable divided power polynomial algebra over a field $k$ of positive characteristic are non-isomorphic binomial $k$-bialgebras with isomorphic underlying $\mathbb{N}$-graded vector spaces.}
\tableofcontents
\section{Introduction}
\paragraph{Symmetric powers in $\mathbb{Q}_{\ge 0}$-linear symmetric monoidal categories.} In any symmetric monoidal category $(\mathsf{C},\otimes,I)$, we can define for every permutation $\sigma \in S_n$ a natural transformation
\begin{equation*}
\sigma\colon A^{\otimes n} \rightarrow A^{\otimes n}.
\end{equation*}
Given any object $A \in \mathsf{C}$, we can then look at either the equalizer or the coequalizer of the diagram
\begin{equation*}
\begin{tikzcd}
A^{\otimes n} \arrow[rr, "\sigma", shift left=2] \arrow[rr, "\dots", shift right=2] &  & A^{\otimes n}
\end{tikzcd}
\end{equation*}
where there is one arrow for each $\sigma \in S_n$. We call the equalizer of this diagram the $n$-th coequalizer symmetric power of $A$ and we call the coequalizer of this diagram the $n$-th equalizer symmetric power of $A$. In this paper, we will be interested in the case where the symmetric monoidal category $(\mathsf{C},\otimes,I)$ is a $\mathbb{Q}_{\ge 0}$-linear symmetric monoidal category. This means that the category $\mathsf{C}$ is enriched over $\mathbb{Q}_{\ge 0}$-modules and the tensor product is bilinear on morphisms. In such categories, an $n$-th coequalizer symmetric power of an object becomes the same as an $n$-th equalizer symmetric power of this object. Thus, we can just say an $n$-th symmetric power. Moreover, in a $\mathbb{Q}_{\ge 0}$-linear symmetric monoidal category, we can describe the $n$-th symmetric power of $A$ as an intermediate object $A_n$ in a splitting of the idempotent
\begin{equation} \label{the-idempotent}
\frac{1}{n!}\underset{\sigma \in S_n}{\sum}\sigma\colon A^{\otimes n} \rightarrow A^{\otimes n}.
\end{equation}
The definition of an $n$-th (co)equalizer symmetric power of an object is introduced in \Cref{THREE}. In the same subsection, we prove that the three above definitions of an $n$-th symmetric power are equivalent in any $\mathbb{Q}_{\ge 0}$-linear symmetric monoidal category.
\paragraph{The example of $(k_n[x])_{n \ge 0}$.} In this paper, we will be interested not in a single $n$-th symmetric power of an object, but in families $(A_n)_{n \ge 0}$ where each object $A_n$ is an $n$-th symmetric power of $A_1$. An important example is given by $(k_n[x])_{n \ge 0}$ where $k$ is a field of characteristic $0$ and $k_n[x]$ is the space of all polynomials of the form $a_n x^n$ where $a_n \in k$. The family of objects $(k_n[x])_{n \ge 0} \in (\mathsf{Vec}_k,\otimes,k)$ can be made into a special kind of $\mathbb{N}$-graded bialgebra that we call a binomial bialgebra. More generally, we define the notion of a binomial bimonoid in any additive symmetric monoidal category, that is, a symmetric monoidal category enriched over commutative monoids and such that the tensor product is biadditive on morphisms. We use the terminology ``binomial bialgebra'' when the additive symmetric monoidal category is the category of modules over a commutative rig.

In order to express the axioms in the definition of a binomial bimonoid, we will make use of string diagrams. String diagrams and bicommutative bimonoids are introduced in \cref{SEC-TWO-ONE}. We then focus on the special properties of the bicommutative bialgebra $k[x]$ in \cref{SEC-TWO-TWO}. In this subsection, the notion of a binomial bimonoid is introduced step by step and it is shown that $k[x]$ is an example.

\paragraph{Other examples.} The typical example of a family of symmetric powers in a $\mathbb{Q}_{\ge 0}$-linear symmetric monoidal category is slightly more general that $(k_n[x])_{n \ge 0}$ where $k$ is a field of characteristic $0$. First, we can replace $k$ with any commutative $\mathbb{Q}_{\ge 0}$-algebra, that is, a commutative rig $k$\footnote{We define a \emph{rig} $k$ as a set equipped with two binary operations $+$ and $\cdot$, and with two elements $0,1 \in k$, such that $(k,+,0)$ is a commutative monoid, $(k,\cdot,1)$ is a monoid, $\cdot$ distributes over $+$ and $0\lambda=\lambda0=0$ for every $\lambda \in k$. A \emph{commutative rig} is a rig $k$ such that the monoid $(k,\cdot,1)$ is commutative.} together with a rig homomorphism $\rho\colon\mathbb{Q}_{\ge 0} \rightarrow k$. If $k$ is any commutative $\mathbb{Q}_{\ge 0}$-algebra, then $(k_n[x])_{n \ge 0}$ is still a binomial bialgebra. More importantly, we can generalize this example by taking a coordinate-free approach. Let $k$ be a commutative $\mathbb{Q}_{\ge 0}$-algebra and let $A$ be a $k$-module. For every $n \ge 0$, we define $S^nA\colon=A^{\otimes n}/\sim$ where $\sim$ is the smallest congruence $\sim$ on $A^{\otimes n}$ such that $a_1 \otimes \dots \otimes a_n \sim a_{\sigma(1)} \otimes \dots \otimes a_{\sigma(n)}$ for all $a_1,\dots,a_n \in A$ and $\sigma \in S_n$. We obtain that $(S^nA)_{n \ge 0}$ is a binomial bimonoid. If $A \simeq k$, then the binomial bimonoids $(S^nA)_{n \ge 0}$ and $(k_n[x])_{n \ge 0}$ are isomorphic as $\mathbb{N}$-graded bimonoids. There are similar constructions in other categories that are covered in \cref{THREE,subsec:main-th}.

\paragraph{Main results.} \Cref{subsec:main-th} is devoted to the statement of the two main theorems in this paper. We first introduce in \cref{def-perm-splitting} a permutation splitting as any family of objects $(A_n)_{n \ge 0}$ in a $\mathbb{Q}_{\ge 0}$-linear symmetric monoidal category, together with morphisms $(r_n\colon A_1^{\otimes n} \rightarrow A_n)_{n \ge 0}$ and $(s_n\colon A_n \rightarrow A_1^{\otimes n})_{n \ge 0}$ such that $r_1=s_1=\mathsf{id}_{A_1}$ and the following equations hold for every $n \in \mathbb{N} \backslash \{1\}$:
\begin{equation*}
r_n;s_n=\frac{1}{n!}\underset{\sigma \in S_n}{\sum}\sigma
\end{equation*}
and
\begin{equation*} \label{perm-split-equation-2}
s_n;r_n=\mathsf{id}_{A_n}.
\end{equation*}
Note that these two equations then hold for every $n \in \mathbb{N}$. A permutation splitting is thus a family of objects constituted of an $n$-th symmetric power $A_n$ of $A_1$ for every $n \in \mathbb{N}$, together with morphisms exhibiting each $A_n$ as an intermediate object in a splitting of the idempotent (\ref{the-idempotent}), and such that the splitting of the idempotent (\ref{the-idempotent}) is trivial when $n=1$.

Our two main theorems will be concerned with how in any $\mathbb{Q}_{\ge 0}$-linear symmetric monoidal category, the notion of a permutation splitting is equivalent to the one of a binomial bimonoid and how in any $\mathbb{Q}_{\ge 0}$-linear symmetric monoidal category, the list of axioms in the definition of a binomial bimonoid can be shortened. The notion of a binomial bimonoid, first introduced in \cref{SEC-TWO-TWO}, is recalled in \cref{def:bin-bimonoid}. The $12$ axioms in the definition of a binomial bimonoid are listed in \cref{fig:bialg-axiom}. Starting from a permutation splitting, we can rearrange the maps $r_n$ and $s_n$ into a unit map
\begin{equation*}
\eta\colon I \rightarrow A_0,
\end{equation*}
a counit map
\begin{equation*}
\epsilon\colon A_0 \rightarrow I,
\end{equation*}
multiplication maps
\begin{equation*}
\nabla_{n,p}\colon A_n \otimes A_p \rightarrow A_{n+p}
\end{equation*}
(for all $n,p \ge 0$) and comultiplication maps
\begin{equation*}
\Delta_{n,p}\colon A_{n+p} \rightarrow A_n \otimes A_p
\end{equation*}
(for all $n,p \ge 0$) which satisfy all the equations required to make $(A_n)_{n \ge 0}$ into a binomial bimonoid. Conversely, starting from a binomial bimonoid, we can rearrange the maps $\eta,\epsilon,\nabla_{n,p}$ and $\Delta_{n,p}$ into maps $r_n$ and $s_n$ making $(A_n)_{n \ge 0}$ into a permutation splitting. \cref{main-theorem} states that these two transformations are inverse of each other so that we have a bijection between operations making $(A_n)_{n \ge 0}$ into a permutation splitting and operations making $(A_n)_{n \ge 0}$ into a binomial bimonoid. \cref{main-theorem-2} then states that the associativity, coassociativity, commutativity and cocommutativity axioms can be removed from the definition of a binomial bimonoid in a $\mathbb{Q}_{\ge 0}$-linear symmetric monoidal category. The resulting axioms are listed in \cref{fig:bialg-axiom-short}.

\paragraph{Outline of the rest of the paper.} The rest of the paper, that is, \cref{SIX} to \cref{SEC:LAST}, is devoted to proving the main theorems and presenting some additional material. In \cref{SIX}, we introduce permutation averages, which are natural transformations obtained as convex sums of permutations, of the form $\frac{1}{|M|}\underset{\sigma \in M}{\sum}\sigma\colon A^{\otimes n} \rightarrow A^{\otimes n}$ for some multiset $M \in \mathcal{M}(S_n)$. Permutation averages will be useful in later sections. \cref{SEC:FOUR} is the longest section of the paper and contains the proofs of \cref{main-theorem,main-theorem-2}.

In \cref{SEC:FIVE}, we are interested in whether being a binomial bimonoid is a property or a structure. For this, we first need to introduce morphisms of binomial bimonoids and of permutation splittings. This is done in \cref{SUBSEC:FIVE-ONE}. In this subsection, \cref{main-theorem} is somehow improved from a bijection at the level of objects to an isomorphism of categories. We show that starting from any $\mathbb{Q}_{\ge 0}$-linear symmetric monoidal category, the categories of binomial bimonoids and permutation splittings are isomorphic, and these two categories are isomorphic to two other categories whose objects are still either binomial bimonoids or permutation splittings, but whose morphisms are simpler. We then obtain in \cref{SUBSEC:FIVE-TWO} that in a $\mathbb{Q}_{\ge 0}$-linear symmetric monoidal category, being a binomial bimonoid is a property: if $(A_n)_{n \ge 0}$ and $(B_n)_{n \ge 0}$ are two binomial bimonoids whose underlying objects are isomorphic in $\mathsf{C}^\mathbb{N}$, then these two binomial bimonoids are isomorphic. We have in fact more: it suffices that $A_1$ and $B_1$ are isomorphic for the two binomial bimonoids to be isomorphic. In \cref{SUBSEC:FIVE-THREE}, we show that being a binomial bimonoid in an arbitrary additive symmetric monoidal category is no longer a property but a structure. If $k$ is a field of positive characteristic, then there exist two nonisomorphic binomial bimonoids $(A_n)_{n \ge 0}$ and $(B_n)_{n \ge 0}$ in $\mathsf{Vec}_k$ such that $(A_n)_{n \ge 0}$ and $(B_n)_{n \ge 0}$ are isomorphic in $\mathsf{Vec}_k^\mathbb{N}$. 

Finally, in \cref{SEC:LAST}, we compute binomial bimonoids from permutation splittings in three $\mathbb{Q}_{\ge 0}$-linear symmetric monoidal categories: the category of modules over a commutative $\mathbb{Q}_{\ge 0}$-algebra, the category of sets and relations and the category of suplattices.
\paragraph{Related works.} This paper originates from the work of Lemay and the author on graded differential linear logic \cite{GradedDiffCat}. A model of $\mathbb{N}$-graded differential linear logic (more precisely, the opposite category of such a model) is given by an additive symmetric monoidal category $(\mathsf{C},\otimes,I)$ together with a family of functors $S^n\colon\mathsf{C} \rightarrow \mathsf{C}$ and some natural transformations which in particular make every graded object $(S^nA)_{n \ge 0}$ into a bicommutative $\mathbb{N}$-graded bimonoid (we call graded object any family of objects $(X_n)_{n \ge 0} \in \mathsf{C}^\mathbb{N}$ for some category $\mathsf{C}$). We obtain a model of $\mathbb{N}$-graded differential linear logic by chosing $S^n$ to be the $n$-th symmetric power functor in any $\mathbb{Q}_{\ge 0}$-linear symmetric monoidal category with symmetric powers and adapting to these categories the usual structure of a $k$-bialgebra on a symmetric algebra $\underset{n \ge 0}{\bigoplus}S^nA$ where $A$ is a module over a commutative ring $k$. We wanted to know which conditions on a $\mathbb{Q}_{\ge 0}$-linear model of $\mathbb{N}$-graded differential linear logic can be imposed to ensure that the functors $S^n$ are the $n$-th symmetric power functors and that the graded bialgebra structure is the usual one adapted from symmetric algebras. The present paper answers a similar question: Which conditions can we impose on a bicommutative $\mathbb{N}$-graded bimonoid in a $\mathbb{Q}_{\ge 0}$-linear symmetric monoidal category to ensure that this is an $\mathbb{N}$-graded bimonoid defined on symmetric powers in the usual way? This question was previously answered in \cite{AxelMag} in the case where the $\mathbb{Q}_{\ge 0}$-linear symmetric monoidal category is $(\mathsf{Vec}_k,\otimes,k)$ for $k$ a field of characteristic $0$. In the present paper, we work in the more general context of an arbitrary $\mathbb{Q}_{\ge 0}$-linear symmetric monoidal category, and we also give more detailled results---even when applied to the $\mathbb{Q}_{\ge 0}$-linear symmetric monoidal category $(\mathsf{Vec}_k,\otimes,k)$ for $k$ a field of characteristic $0$---in the form of \cref{main-theorem}  and \cref{main-theorem-2}.

The notion of a divided power polynomial algebra and its structure of an $\mathbb{N}$-graded bialgebra, which appear in \cref{SUBSEC:FIVE-THREE}, are classical concepts which can be found for instance as Example 2.6 in \cite{ABE}. For background on symmetric monoidal categories, see \cite{MAC}. The notion of a symmetric algebra, which is the countable coproduct $\underset{n \ge 0}{\bigoplus}S^nA$ where the $S^nA$ are the $n$-th coequalizer symmetric powers of a module $A$ over a commutative rig $k$, is well-known in abstract algebra, particularly when $k$ is a commutative ring, see for example \cite{BOUR}.
\paragraph{Conventions and notations.}
\begin{itemize}
\item The composition will mainly be written in the diagrammatic order, that is, if $f\colon A \rightarrow B$ and $g\colon B \rightarrow C$ are morphisms in a category, then the composite is written $f;g\colon A \rightarrow C$. However, we will also sometimes use the classical order, so that the previous composite is written $g \circ f\colon A \rightarrow C$. 
\item We will ignore the coherence isomorphisms and work as if the symmetric monoidal categories were strict monoidal, that is, if $(\mathsf{C},\otimes,I)$ is a symmetric monoidal category, we will assume that the unitors and associators are equalities of functors:
\begin{equation*}
I \otimes - = \mathsf{id}_{\mathsf{C}} = - \otimes I\colon\mathsf{C} \rightarrow \mathsf{C},
\end{equation*}
\begin{equation*}
(- \otimes -) \otimes - = - \otimes (- \otimes -)\colon\mathsf{C}^3 \rightarrow \mathsf{C}.
\end{equation*}
In particular, if $k$ is a commutative rig, $M,N,P$ are $k$-modules, $\lambda \in k$ and $(m,n,p) \in M \times N \times P$, we will write
\begin{equation*}
\lambda \otimes m=m \otimes \lambda=\lambda m \in M
\end{equation*}
and
\begin{equation*}
m \otimes n \otimes p =(m \otimes n) \otimes p=m \otimes (n \otimes p).
\end{equation*}
\item The tensor product on morphisms will have higher precedence than the composition which means that an expression such as
\begin{equation*}
f \otimes g;h \otimes i
\end{equation*}
must be interpreted as
\begin{equation*}
(f \otimes g);(h \otimes i).
\end{equation*}
\item If $n \le p \in \mathbb{N}$, we define $[n,p]:=\{n,n+1,\dots,p\}$.
\item $S_n$ is the symmetric group on $n$ elements. We recall that both $S_1$ and $S_0$ are the trivial group.
\item We will write $\gamma_{A,B}\colon A \otimes B \rightarrow B \otimes A$ for the exchange in a symmetric monoidal category $(\mathsf{C},\otimes,I)$.
\item Let $(\mathsf{C},\otimes,I)$ be a symmetric monoidal category. By using the exchange, we can define for all $n \ge 0$ and $\sigma \in S_n$, a natural transformation
\begin{equation*}
\sigma\colon A^{\otimes n} \rightarrow A^{\otimes n}
\end{equation*}
that we will denote by the same symbol. These natural transformations are defined so that if $\sigma,\tau \in S_n$, then the natural transformation associated with $\sigma;\tau \in S_n$ coincides with the composite of the two natural transformations $\sigma,\tau\colon A^{\otimes n} \rightarrow A^{\otimes n}$.\footnote{Let $n \ge 0$. Recall that the adjacent transposition $(a,a+1) \in S_n$ where $1 \le a \le n-1$ is the permutation defined by 
\begin{equation*}
(a,a+1)(k)=
\left\{
\begin{aligned}
k &~\text{if } 1 \le k \le a-1 \text{ or } a+2\le k \le n, \\
a+1 &~\text{if } k=a, \\
a &~\text{if } k=a+1.
\end{aligned}
\right.
\end{equation*}
If $\sigma \in S_n$, we can decompose $\sigma$ as a product of adjacent transpositions: $\sigma=(a_1,a_1+1);\dots;(a_p,a_p+1)$ where $a_1,\dots,a_p \in [1,n]$. We then define the natural transformation $\sigma\colon A^{\otimes n}\rightarrow A^{\otimes n}$ as 
\begin{equation*}
\sigma=\mathsf{id}_{A^{\otimes (a_1-1)}} \otimes \gamma_{A,A} \otimes \mathsf{id}_{A^{\otimes(n-(a_1+1))}};\dots;\mathsf{id}_{A^{\otimes(a_p-1)}} \otimes \gamma_{A,A} \otimes \mathsf{id}_{A^{\otimes(n-(a_p+1))}}.
\end{equation*}
According to MacLane's coherence theorem for symmetric monoidal categories, the natural transformation $\sigma\colon A^{\otimes n} \rightarrow A^{\otimes n}$ does not depend on the choice of the decomposition of $\sigma$ as a product of adjacent transpositions.}
\item The symbol $*$ denotes a one-point set.
\item RHS means ``Right-Hand Side'' and LHS means ``Left-Hand Side''.
\item For all positive integers $n$ and $p$, we denote by $\mathbb{N}^{n \times p}$ the set of all matrices with nonnegative integer coefficients, $n$ rows and $p$ columns. We denote a matrix in $\mathbb{N}^{n \times p}$ by an expression of the form $(m_i^j)$ where $1 \le i \le n$ and $1 \le j \le p$. 
\item The commutative rig of nonnegative rational numbers is denoted by $\mathbb{Q}_{\ge 0}$. 
\end{itemize}
\paragraph{Acknowldegments.} I would like to thank John Baez, Richard Blute, Vikraman Choudhury, Sacha Ikonicoff, Shin-ya Katsumata, Jean-Simon Lemay, Thomas Vandeven, Geoff Vooys and the late Philip Scott for useful discussions and their support of this research. I would like to thank the anonymous referees for their very helpful reports, which significantly improved the exposition. This work was financially supported by Aix-Marseille University and NSERC under the grants awarded to Richard Blute and the late Philip Scott.
\section{String diagrams and the bialgebraic structure of $k[x]$} \label{SEC-TWO}
In this section, we will introduce string diagrams and explain using them what is a binomial bimonoid, one of the main notions in this paper. We will also introduce an important example of a binomial bimonoid which is $k[x]$ for $k$ a field of characteristic $0$ (in fact, as explained at the end of this section, $k[x]$ is a binomial bimonoid for any field $k$ but the case of a field of characteristic $0$ will be more important to us). The style of this section is somewhat informal. Later in the paper, we recall the notion of a binomial bimonoid in a concise and formal manner, prior to stating our main theorems.
\subsection{String diagrams and bicommutative bimonoids} \label{SEC-TWO-ONE}
\paragraph{i. String diagrams for symmetric monoidal categories.}
We will make heavy use of string diagrams to represent morphisms in a symmetric monoidal category $(\mathsf{C},\otimes,I)$. Strings represent objects and morphisms are represented by nodes. A morphism $f$ from $A$ to $B$ is represented as
\begin{equation*}
\resizebox{!}{1.2cm}{\tikzfig{PRES1}}\quad.
\end{equation*}
If $f\colon A \rightarrow B$ and $g\colon B \rightarrow C$ are two morphisms, the composite $f;g$ is then represented as follows:
\begin{equation*}
\resizebox{!}{1.2cm}{\tikzfig{PRES2}}\quad:=\quad\resizebox{!}{1.2cm}{\tikzfig{PRES2A}}\quad.
\end{equation*}
The identity on $A$ is represented as a simple string:
\begin{equation*}
\resizebox{!}{1.2cm}{\tikzfig{PRES3B}}\quad:=\quad\resizebox{!}{1.2cm}{\tikzfig{PRES3A}}\quad.
\end{equation*}
If $A_1,\dots,A_n$ are $n$ objects, the object $A_1 \otimes \dots \otimes A_n$ is represented as
\begin{equation*}
\resizebox{!}{0.17cm}{\tikzfig{PRES4}}\quad.
\end{equation*}
In particular, an empty tensor product, that is, the monoidal unit $I$, is represented as an empty node. The empty string diagram thus represents $\mathsf{id}_I$. 

We will represent a morphism $f\colon A_1 \otimes \dots \otimes A_n \rightarrow B_1 \otimes \dots \otimes B_p$ as follows:
\begin{equation*}
\resizebox{!}{1.2cm}{\tikzfig{PRES8A}}\quad.
\end{equation*}
If $f\colon A \rightarrow B$ and $g\colon C \rightarrow D$ are two morphisms, the tensor product $f \otimes g$ is represented as follows:
\begin{equation*}
\resizebox{!}{1.2cm}{\tikzfig{PRES5B}}\quad:=\quad\resizebox{!}{1.2cm}{\tikzfig{PRES5A}}\quad.
\end{equation*}
Finally, the exchange $\gamma\colon A \otimes B \rightarrow B \otimes A$ is represented as follows:
\begin{equation*}
\resizebox{!}{1.2cm}{\tikzfig{PRES6B}}\quad:=\quad\resizebox{!}{1.2cm}{\tikzfig{PRES6A}}\quad.
\end{equation*}
The naturality of the exchange is drawn as follows:
\begin{equation*}
\resizebox{!}{1.2cm}{\tikzfig{PRES7B}}\quad=\quad\resizebox{!}{1.2cm}{\tikzfig{PRES7A}}\quad.
\end{equation*}
where $f\colon A \rightarrow D$ and $g\colon B \rightarrow C$ are any two morphisms.
\paragraph{ii. String diagrams for additive symmetric monoidal categories.} \label{SDA} In the next sections of this paper, we will only be interested in symmetric monoidal categories which are additive, which means that each homset is a commutative monoid.\footnote{In fact, we will mainly be interested in symmetric monoidal categories which are $\mathbb{Q}_{\ge 0}$-linear, which means that each homset is a $\mathbb{Q}_{\ge 0}$-module, see \cref{def:k-linear}.} More precisely, an \emph{additive symmetric monoidal category} is a symmetric monoidal category $(\mathsf{C},\otimes,I)$ such that each homset $\mathsf{C}[A,B]$ is a commutative monoid, whose binary operation and unit we will denote additively, and such that the following identities are satisfied whenever they make sense:
\begin{equation} \label{eq:additive}
\begin{aligned}
(f+g);h &= (f;h) + (g;h), &\qquad f;(g+h) &= (f;g)+(f;h), \\
0;f &= f;0=0, &\qquad (f+g)\otimes h &= (f\otimes h) + (g\otimes h), \\
f\otimes(g+h) &= (f\otimes g) + (f\otimes h), &\qquad 0\otimes f &= f\otimes 0 = 0.
\end{aligned}
\end{equation}
The main example of an additive symmetric monoidal category is the category $(\mathsf{Vec}_k,\otimes,k)$ of vector spaces over a field $k$ or more generally the category $(\mathsf{Mod}_R,\otimes,R)$ of modules over a commutative rig $R$.\footnote{If $R$ is a rig, we define a \emph{module over $R$} as a commutative monoid $(A,+,0)$, together with an operation $R \times A \rightarrow A$ denoted by $(\lambda,a) \mapsto \lambda a$ such that $\lambda(a+b)=\lambda a+\lambda b$, $(\lambda+\mu)a=\lambda a+\mu a$, $(\lambda \mu)a=\lambda(\mu a)$, $1a=a$ and $0a=\lambda 0=0$.}

To obtain string diagrams for additive symmetric monoidal categories, we must add zero maps to string diagrams for symmetric monoidal categories, represented as
\begin{equation*}
\resizebox{!}{1.2cm}{\tikzfig{PRESADD1}}\quad,
\end{equation*}
and we must add sums of morphisms represented as
\begin{equation*}
\resizebox{!}{1.2cm}{\tikzfig{PRESADD2}}\quad:=\quad\resizebox{!}{1.2cm}{\tikzfig{PRES1}}\quad+\quad\resizebox{!}{1.2cm}{\tikzfig{PRESADD3}}\quad.
\end{equation*}
The equations \eqref{eq:additive} are represented as follows in string diagrams:
\begin{align*}
\resizebox{!}{1.2cm}{\tikzfig{PRESADD4}}\quad&=\quad\resizebox{!}{1.2cm}{\tikzfig{PRESADD5}}\quad+\quad\resizebox{!}{1.2cm}{\tikzfig{PRESADD6}}\quad, &\quad\resizebox{!}{1.2cm}{\tikzfig{PRESADD7}}\quad&=
\quad\resizebox{!}{1.2cm}{\tikzfig{PRESADD8}}\quad+\quad\resizebox{!}{1.2cm}{\tikzfig{PRESADD9}}\quad, \\[1em]
\resizebox{!}{1.2cm}{\tikzfig{PRESADD10}}\quad&=\quad\resizebox{!}{1.2cm}{\tikzfig{PRESADD11}}\quad=\quad\resizebox{!}{1.2cm}{\tikzfig{PRESADD12}}\quad, && 
\end{align*}
\begin{align*}
\resizebox{!}{1.2cm}{\tikzfig{PRESADD13}}\quad\resizebox{!}{1.2cm}{\tikzfig{PRESADD14}}\quad&=\quad\left(\quad\resizebox{!}{1.2cm}{\tikzfig{PRESADD15}}\quad\resizebox{!}{1.2cm}{\tikzfig{PRESADD14}}\quad\right)\quad+\quad\left(\quad\resizebox{!}{1.2cm}{\tikzfig{PRESADD16}}\quad\resizebox{!}{1.2cm}{\tikzfig{PRESADD14}}\quad\right)\quad, \\[1em]
\resizebox{!}{1.2cm}{\tikzfig{PRESADD15}}\quad\resizebox{!}{1.2cm}{\tikzfig{PRESADD17}}\quad&=\quad\left(\quad\resizebox{!}{1.2cm}{\tikzfig{PRESADD15}}\quad\resizebox{!}{1.2cm}{\tikzfig{PRESADD18}}\quad\right)\quad+\quad\left(\quad\resizebox{!}{1.2cm}{\tikzfig{PRESADD15}}\quad\resizebox{!}{1.2cm}{\tikzfig{PRESADD14}}\quad\right)\quad,
\end{align*}
\begin{align*}
\resizebox{!}{1.2cm}{\tikzfig{PRESADD19}}\quad\resizebox{!}{1.2cm}{\tikzfig{PRESADD20}}\quad&=\quad\resizebox{!}{1.2cm}{\tikzfig{PRESADD22}}\quad\resizebox{!}{1.2cm}{\tikzfig{PRESADD23}}\quad=\quad\resizebox{!}{1.2cm}{\tikzfig{PRESADD21}}\quad.&&
\end{align*}
\paragraph{iii. Commutative monoids.} String diagrams are very convenient to discuss commutative monoids in symmetric monoidal categories. A \emph{commutative monoid} in a symmetric monoidal category $(\mathsf{C},\otimes,I)$ is an object $A$, together with a multiplication map $\nabla\colon A \otimes A \rightarrow A$, in string diagrams
\begin{equation*}
\resizebox{!}{1.2cm}{\tikzfig{PRES9}}\quad,
\end{equation*}
and a unit map $\eta\colon I \rightarrow A$, in string diagrams
\begin{equation*}
\resizebox{!}{1cm}{\tikzfig{PRES10}}\quad,
\end{equation*}
such that the following equations are satisfied:
\begin{equation*}
\resizebox{!}{1.2cm}{\tikzfig{PRES11A}}~~=~~\resizebox{!}{1.2cm}{\tikzfig{PRES11B}}~~,\qquad
\resizebox{!}{1.2cm}{\tikzfig{PRES11C}}~~=~~\resizebox{!}{1.2cm}{\tikzfig{PRES11D}}~~,\qquad
\resizebox{!}{1.2cm}{\tikzfig{PRES11E}}.
\end{equation*}
The first equation is the unitality of the multiplication, the second one its associativity and the third one its commutativity.

In the symmetric monoidal category $(\mathsf{Set},\times,*)$ where $\times$ is the cartesian product and $*$ is a singleton set, a commutative monoid is a commutative monoid in the usual sense. If $k$ is a field, in the symmetric monoidal category $(\mathrm{Vec}_k,\otimes,k)$ where $\otimes$ is the tensor product of vector spaces, a commutative monoid is called a \emph{commutative $k$-algebra}.

\paragraph{iv. Cocommutative comonoids.} Dualizing, on string diagrams applying an up-down symmetry, we obtain the notion of a cocommutative comonoid. A \emph{cocommutative comonoid} in a symmetric monoidal category is an object $A$, together with a comultiplication map $\Delta\colon A \rightarrow A \otimes A$, in string diagrams
\begin{equation*}
\resizebox{!}{1.2cm}{\tikzfig{PRES12}}\quad,
\end{equation*}
and a counit map $\epsilon\colon A \rightarrow I$, in string diagrams
\begin{equation*}
\resizebox{!}{1cm}{\tikzfig{PRES13}}\quad,
\end{equation*}
such that the following equations are satisfied:
\begin{equation*}
\resizebox{!}{1.2cm}{\tikzfig{PRES14A}}~~=~~\resizebox{!}{1.2cm}{\tikzfig{PRES14B}}~~,\qquad
\resizebox{!}{1.2cm}{\tikzfig{PRES14C}}~~=~~\resizebox{!}{1.2cm}{\tikzfig{PRES14D}}~~,\qquad
\resizebox{!}{1.2cm}{\tikzfig{PRES14E}}.
\end{equation*}
The first equation is the counitality of the comultiplication, the second one its coassociativity and the third one its cocommutativity.

Every set becomes a cocommutative comonoid in $(\mathsf{Set},\times,*)$ by defining a comultiplication 
\begin{equation*}
\mathsf{copy}\colon X \rightarrow X \times X
\end{equation*}
as $\mathsf{copy}(x)=(x,x)$ and a counit
\begin{equation*}
\mathsf{erase}\colon X \rightarrow *
\end{equation*}
by $\mathsf{erase}(x)=\bullet$ where $\bullet$ is the unique element in the singleton $*$. More generally, if $\mathsf{C}$ is a category with finite products, and if we denote by $\times$ the binary product and by $\top$ the terminal object, then we obtain a symmetric monoidal category $(\mathsf{C},\times,\top)$ in which each object can be made into a cocommutative comonoid as in $\mathsf{Set}$. If $k$ is a field, in the symmetric monoidal category $(\mathsf{Vec}_k,\otimes,k)$, a cocommutative comonoid is called a \emph{cocommutative $k$-coalgebra}. Suppose that $k$ is a field of characteristic $0$, which means that $n\cdot 1 \neq 0$ for every $n \in \mathbb{N} \backslash \{0\}$, so that we can divide by $n$ for every positive integer $n$. An example of a cocommutative $k$-coalgebra is given by the differentiation of polynomials. The comultiplication
\begin{equation*}
\Delta\colon k[x] \rightarrow k[x] \otimes k[x]
\end{equation*}
is defined by 
\begin{equation*}
\Delta(f(x))=\sum_{n \ge 0}\frac{1}{n!}\frac{\mathrm{d}^nf}{\mathrm{d}x^n} \otimes x^n
\end{equation*}
and the counit 
\begin{equation*}
\epsilon\colon k[x] \rightarrow I
\end{equation*}
is defined by 
\begin{equation*}
\epsilon(a_nx^n+\dots+a_0)=a_0.
\end{equation*}
We will check counitality, coassociativity and cocommutativity. Since the monomials $x^p$ for $p \ge 0$ form a basis of $k[x]$, it suffices to check the identities on these.

First, observe that
\begin{equation*}
\frac{1}{n!}\frac{\mathrm{d}^n}{\mathrm{d}x^n}(x^p)=\frac{p(p-1)\dots(p-n+1)}{n!}x^{p-n}=\binom{p}{n}x^{p-n}
\end{equation*}
for every $0 \le n \le p$ and
\begin{equation*}
\frac{1}{n!}\frac{\mathrm{d}^n}{\mathrm{d}x^n}(x^p)=0
\end{equation*}
for every $n>p$, so that
\begin{align} \label{Delta-binom-intro}
\Delta(x^p)=\underset{0 \le n \le p}{\sum}\binom{p}{n}x^{p-n} \otimes x^n.
\end{align}
We thus have 
\begin{equation*}
(\epsilon \otimes \mathsf{id}_{k[x]})(\Delta(x^n))=1 \otimes x^n=x^n
\end{equation*}
which proves counitality. Next, we have
\begin{equation*}
\gamma(\Delta(x^p))=\underset{0 \le n \le p}{\sum}\binom{p}{n}x^n \otimes x^{p-n}.
\end{equation*}
We obtain $\gamma(\Delta(x^p))=\Delta(x^p)$ and thus cocommutativity by observing that
\begin{equation*}
\{(n,p-n)~|~0 \le n \le p\}=\{(a,b)~|~0 \le a,b \le p,~a+b=p\}=\{(p-n,n)~|~0 \le n \le p\}.
\end{equation*}
and that
\begin{equation*}
\binom{p}{n}=\binom{p}{p-n}
\end{equation*}
for every $0 \le n \le p$.

It remains to prove coassociativity. We have
\begin{equation} \label{coassoc-intro-1}
(\Delta \otimes \mathsf{id})(\Delta(x^p))=\underset{0 \le n \le p}{\sum}~\underset{0 \le k \le p-n}{\sum}\binom{p}{n}\binom{p-n}{k}x^{p-n-k} \otimes x^k \otimes x^n.
\end{equation}
We also have
\begin{equation} \label{coassoc-intro-2}
(\mathsf{id} \otimes \Delta)(\Delta(x^p))=(\mathsf{id} \otimes \Delta)(\gamma(\Delta(x^p)))=\underset{0 \le n \le p}{\sum}~\underset{0 \le k \le p-n}{\sum}\binom{p}{n}\binom{p-n}{k}x^n \otimes x^{p-n-k} \otimes x^k.
\end{equation}
By cocommutativity, we can apply a permutation to the RHS of \cref{coassoc-intro-2} to get that
\begin{equation*}
(\mathsf{id} \otimes \Delta)(\Delta(x^p))=\underset{0 \le n \le p}{\sum}~\underset{0 \le k \le p-n}{\sum}\binom{p}{n}\binom{p-n}{k}x^{p-n-k} \otimes x^k \otimes x^n.
\end{equation*}
Coassociativity follows from \cref{coassoc-intro-1,coassoc-intro-2}.
\paragraph{v. Bicommutative bimonoids.} The cocommutative comonoid $k[x]$ for $k$ a field of characteristic $0$ will play an important role in our paper, although in a more general form. Since $k[x]$ is a commutative $k$-algebra, that is, a commutative monoid in $(\mathsf{Vec}_k,\otimes,k)$, we can wonder how its structures of a commutative monoid and of a cocommutative comonoid in $(\mathsf{Vec}_k,\otimes,k)$ interact. This is where the notion of a bicommutative bimonoid appears. A \emph{bicommutative bimonoid} in a symmetric monoidal category $(\mathsf{C},\otimes,I)$ is given by a tuple $(A,\nabla,\eta,\Delta,\epsilon)$ such that $(A,\nabla,\eta)$ is a commutative monoid, $(A,\Delta,\epsilon)$ is a cocommutative comonoid, and the following identities are satisfied:
\begin{equation} \label{compatibility-intro-1}
\resizebox{!}{1.2cm}{\tikzfig{PRES15A}}~~=~~\resizebox{!}{1.2cm}{\tikzfig{PRES15B}}~~,\qquad
\resizebox{!}{1.2cm}{\tikzfig{PRES15C}}~~=~~\resizebox{!}{1.2cm}{\tikzfig{PRES15D}}~~,\qquad
\resizebox{!}{1.2cm}{\tikzfig{PRES15E}}~~=\qquad\qquad,
\end{equation}
\begin{equation} \label{compatibility-intro-2}
\resizebox{!}{1.2cm}{\tikzfig{PRES15F}}~~=~~\resizebox{!}{1.2cm}{\tikzfig{PRES15G}}\quad.
\end{equation}
We call a bicommutative bimonoid in $(\mathsf{Vec}_k,\otimes,k)$ a \emph{bicommutative $k$-bialgebra}. Another example is as follows: if $(X,\nabla,\eta)$ is a commutative monoid in the usual sense, that is, a commutative monoid in $(\mathsf{Set},\times,*)$, we obtain that $X$ is a bicommutative bimonoid with the comultiplication $\mathsf{copy}$ and the counit $\mathsf{erase}$ previously defined on any set $X$.

The identities in \cref{compatibility-intro-1} are easy to check for $k[x]$. We will check \cref{compatibility-intro-2} for $k[x]$. It suffices to check on the elements $x^n \otimes x^p$ since they form a basis of $k[x] \otimes k[x]$.

We obtain from \cref{Delta-binom-intro} that
\begin{equation} \label{intro-before-binomial-1}
\Delta(\nabla(x^n \otimes x^p))=\underset{0 \le k \le n+p}{\sum}\binom{n+p}{k}x^{n+p-k} \otimes x^k.
\end{equation}
Moreover, we have
\begin{equation*}
(\Delta \otimes \Delta)(x^n \otimes x^p)=\underset{0 \le a \le n}{\sum}~\underset{0 \le b \le p}{\sum}\binom{n}{a}\binom{p}{b}x^{n-a}\otimes x^a \otimes x^{p-b} \otimes x^b.
\end{equation*}
Applying the RHS in \cref{compatibility-intro-2} to $x^n \otimes x^p$ thus gives
\begin{equation} \label{intro-before-binomial-2}
\underset{0 \le a \le n}{\sum}~\underset{0 \le b \le p}{\sum}\binom{n}{a}\binom{p}{b}x^{n+p-a-b}\otimes x^{a+b}=\underset{0 \le k \le n+p}{\sum}~\underset{\substack{0 \le a \le n \\ 0 \le b \le p \\ a+b=k}}{\sum}\binom{n}{a}\binom{p}{b}x^{n+p-k} \otimes x^k.
\end{equation}
But the following formula, known as Vandermonde's identity, holds:\footnote{Vandermonde's identity can be proved as follows: Consider a set $A$ of cardinal $n$ and a set $B$ of cardinal $p$. Choosing a subset of $A \sqcup B$ with $k$ elements is equivalent to choosing a subset of $A$ with $a$ elements and a subset of $B$ with $b$ elements, for some decomposition $k=a+b$ where $0 \le a \le n$ and $0 \le b \le p$.}
\begin{equation} \label{intro-before-binomial-3}
\underset{\substack{0 \le a \le n \\ 0 \le b \le p \\ a+b=k}}{\sum}\binom{n}{a}\binom{p}{b}=\binom{n+p}{k}.
\end{equation}
Combining \cref{intro-before-binomial-1,intro-before-binomial-2,intro-before-binomial-3}, we obtain that $\Delta(\nabla(x^n \otimes x^p))$ is equal to the RHS in \cref{compatibility-intro-2} applied to $x^n \otimes x^p$, as desired.
\subsection{Bialgebraic properties of $k[x]$} \label{SEC-TWO-TWO}
In the previous subsection, we introduced two bicommutative bimonoids: the bicommutative bimonoid in $(\mathsf{Set},\times,*)$ obtained from any commutative monoid by copying and erasing, and the bicommutative $k$-bialgebra $k[x]$ for $k$ a field of characteristic $0$, obtained by adding a comultiplication defined from differentiation and a counit which extracts the constant term, to the usual commutative $k$-algebra structure on $k[x]$. In this paper, we will mainly be interested in the second bicommutative bimonoid. In this section, we will discuss some special properties of the bicommutative bimonoid $k[x]$ for $k$ a field of characteristic $0$.

\paragraph{i. $k[x]$ is an $\mathbb{N}$-graded $k$-vector space.} Denote by $k_n[x]$ the $k$-vector space of monomials of the form $a_nx^n$ for any nonnegative integer $n$ (note that we consider that the polynomial $0$ is of degree $n$ for any $n \ge 0$). We have the decomposition
\begin{equation} \label{graded-biprod}
k[x]=\underset{n \ge 0}{\bigoplus}k_n[x].
\end{equation}
We will say that $k[x]$ is an \emph{$\mathbb{N}$-graded $k$-vector space}. We have
\begin{align*}
k[x] \otimes k[x]=&~\Big(\underset{n \ge 0}{\bigoplus}k_n[x]\Big) \otimes \Big(\underset{p \ge 0}{\bigoplus}k_p[x]\Big) \\
\simeq&~\underset{n \ge 0}{\bigoplus}\Big(k_n[x] \otimes \Big(\underset{p \ge 0}{\bigoplus}k_p[x]\Big)\Big) \\
\simeq&~\underset{n \ge 0}{\bigoplus}\underset{p \ge 0}{\bigoplus}\Big(k_n[x] \otimes k_p[x]\Big) \\
\simeq&~\underset{n,p \ge 0}{\bigoplus}\Big(k_n[x] \otimes k_p[x]\Big). 
\end{align*}
\paragraph{ii. $k[x]$ is a commutative $\mathbb{N}$-graded $k$-algebra.} It follows that our multiplication map $\nabla\colon k[x] \otimes k[x] \rightarrow k[x]$ is completely determined by its restrictions to $k_n[x] \otimes k_p[x]$. But that's not everything. The product of an homogeneous polynomial of degree $n$ with an homogeneous polynomial of degree $p$ is an homogeneous polynomial of degree $n+p$ so that the restriction of $\nabla$ to $k_n[x] \otimes k_p[x]$ can in fact be corestricted to a map
\begin{equation} \label{graded-mult}
\nabla_{n,p}\colon k_n[x] \otimes k_p[x] \rightarrow k_{n+p}[x].
\end{equation}
Similarly, the image of our unit map $\eta\colon k \rightarrow k[x]$ is exactly $k_0[x]$, so that we can as well write our unit map under the form
\begin{equation} \label{graded-unit}
\eta\colon k \rightarrow k_0[x].
\end{equation}
In abstract algebra, a commutative $k$-algebra whose underlying $k$-vector space admits a decomposition as a countable coproduct as in \cref{graded-biprod}, and whose multiplication and unit maps can be decomposed as in \cref{graded-mult,graded-unit} is called a \emph{commutative $\mathbb{N}$-graded algebra}.

In a symmetric monoidal category, it would be harder to work with a countable coproduct as $k[x]$ rather than directly with the small pieces $k_n[x]$ for at least two reasons. Firstly, we do not have countable coproducts in many categories of interest because they do not admit large enough objects. Two examples are $(\mathsf{FVec}_k,\otimes,k)$ the symmetric monoidal category of finite dimensional $k$-vector spaces and $(\mathsf{FSet},\times,\{*\})$ the symmetric monoidal category of finite sets, so we cannot directly generalize the notion of an $\mathbb{N}$-graded vector space to these symmetric monoidal categories without any change in the definition. Secondly, even if we formulated a notion of $\mathbb{N}$-graded object as a countable product of objects, we would not be able to express that the multiplication sends $k_n[x] \otimes k_p[x]$ to $k_{n+p}[x]$ and that the unit sends $k$ to $k_0[x]$ without a notion of image which is not available in arbitrary symmetric monoidal categories.

We thus define a \emph{commutative $\mathbb{N}$-graded monoid} in a symmetric monoidal category $(\mathsf{C},\otimes,I)$ as follows: a graded object\footnote{If $\mathsf{C}$ is a category, we wil call \emph{graded object} in $\mathsf{C}$ any family of objects $(X_n)_{n \ge 0} \in \mathsf{C}^\mathbb{N}$. We will often just write $(X_n)$ for a graded object.} $(A_n)_{n \ge 0}$ together with a family of morphisms $(\nabla_{n,p}\colon A_n \otimes A_p \rightarrow A_{n+p})_{n,p \ge 0}$, whose components $\nabla_{n,p}\colon A_n \otimes A_p \rightarrow A_{n+p}$ we draw in string diagrams
\begin{equation*}
\resizebox{!}{1.2cm}{\tikzfig{PRES16}}\quad,
\end{equation*}
and a morphism $\eta\colon I \rightarrow A_0$, in string diagrams
\begin{equation*}
\resizebox{!}{1cm}{\tikzfig{PRES17}}\quad,
\end{equation*}
such that the following identities are satisfied:
\begin{equation*}
\resizebox{!}{1.2cm}{\tikzfig{PRES18A}}~=~\resizebox{!}{1.2cm}{\tikzfig{PRES18B}}~,\quad
\resizebox{!}{1.2cm}{\tikzfig{PRES18C}}~=~\resizebox{!}{1.2cm}{\tikzfig{PRES18D}}~,\quad
\resizebox{!}{1.2cm}{\tikzfig{PRES18E}}~=~\resizebox{!}{1.2cm}{\tikzfig{PRES18F}}.
\end{equation*}
\paragraph{iii. $k[x]$ is a cocommutative $\mathbb{N}$-graded $k$-coalgebra.} We will now explain the dual notion of a cocommutative $\mathbb{N}$-graded comonoid, starting with the example of $k[x]$ in $(\mathsf{Vec}_k,\otimes,k)$. Recall that
\begin{equation*}
k[x] \otimes k[x] \simeq \underset{n,p \ge 0}{\bigoplus}\Big(k_n[x] \otimes k_p[x]\Big) \subseteq \underset{n,p \ge 0}{\prod}\Big(k_n[x] \otimes k_p[x]\Big).
\end{equation*}
In abstract algebra, a cocommutative $k$-coalgebra whose underlying $k$-vector space is $\mathbb{N}$-graded is called a \emph{cocommutative $\mathbb{N}$-graded $k$-coalgebra} if the comultiplication satisfies
\begin{equation*}
\Delta(k_n[x]) \subseteq \underset{0 \le i \le n}{\bigoplus}k_{n-i}[x] \otimes k_i[x]
\end{equation*}
and the counit satisfies
\begin{equation*}
\epsilon(k_n[x])=\{0\}
\end{equation*}
for every $n \ge 1$. In these conditions, the comultiplication can equivalently be described via the maps
\begin{equation*}
\Delta_{n,p}=\pi_{n,p} \circ \Delta \circ i_{n+p}\colon k_{n+p}[x] \rightarrow k_n[x] \otimes k_p[x]
\end{equation*}
for all $n,p \ge 0$ and the counit can be described via the restriction
\begin{equation*}
\epsilon\colon k_0[x] \rightarrow k.
\end{equation*}
We define a \emph{cocommutative $\mathbb{N}$-graded comonoid} in a symmetric monoidal category $(\mathsf{C},\otimes,I)$ as the dual notion of a commutative $\mathbb{N}$-graded monoid. That is, a family of objects $(A_n)_{n \ge 0}$ together with a family of morphisms $(\Delta_{n,p}\colon A_{n+p} \rightarrow A_n \otimes A_p)_{n,p \ge 0}$, whose components $\Delta_{n,p}\colon A_{n+p} \rightarrow A_n \otimes A_p$ we draw in string diagrams
\begin{equation*}
\resizebox{!}{1.2cm}{\tikzfig{PRES19}}\quad,
\end{equation*}
and a morphism $\epsilon\colon A_0 \rightarrow I$, in string diagrams
\begin{equation*}
\resizebox{!}{0.8cm}{\tikzfig{PRES20}}\quad,
\end{equation*}
such that the following identities are satisfied:
\begin{equation*}
\resizebox{!}{1.2cm}{\tikzfig{PRES21A}}~=~\resizebox{!}{1.2cm}{\tikzfig{PRES21B}}~,\quad
\resizebox{!}{1.2cm}{\tikzfig{PRES21C}}~=~\resizebox{!}{1.2cm}{\tikzfig{PRES21D}}~,\quad
\resizebox{!}{1.2cm}{\tikzfig{PRES21E}}~=~\resizebox{!}{1.2cm}{\tikzfig{PRES21F}}~.
\end{equation*}
For every field $k$, the cocommutative coalgebra $k[x]$ is $\mathbb{N}$-graded since $\epsilon(f) \in k_0[x]$ for every polynomial $f$, and for every $f(x) \in k_{n}[x]$, we have
\begin{equation} \label{comult-with-division}
\Delta(f)=\underset{i \ge 0}{\sum}\frac{1}{i!}\frac{\mathrm{d}^if}{\mathrm{d}x^i} \otimes x^i=\underset{0 \le i \le n}{\sum}\frac{1}{i!}\frac{\mathrm{d}^if}{\mathrm{d}x^i} \otimes x^i \in \underset{0 \le i \le n}{\bigoplus}k_{n-i}[x] \otimes k_i[x]
\end{equation}
because $\frac{\mathrm{d}^if}{\mathrm{d}x^i}=0$ for every $i>n$ and $\frac{1}{i!}\frac{\mathrm{d}^if}{\mathrm{d}x^i} \otimes x^i \in k_{n-i}[x] \otimes k_i[x]$ for every $0 \le i \le n$.

Moreover, we obtain from \cref{Delta-binom-intro} that the map 
\begin{equation*}
\Delta_{n,p}\colon k_{n+p}[x] \rightarrow k_n[x] \otimes k_p[x]
\end{equation*}
is given for all $n,p \ge 0$ by
\begin{equation} \label{comult-without-division}
\Delta_{n,p}(x^{n+p})=\binom{n+p}{n}x^n \otimes x^p.
\end{equation}
\paragraph{iv. $k[x]$ is a bicommutative $\mathbb{N}$-graded $k$-bialgebra.}
We know that $k[x]$ is both a commutative $\mathbb{N}$-graded $k$-algebra and a cocommutative $\mathbb{N}$-graded $k$-coalgebra. We also know that it is a bicommutative $k$-bialgebra.

We call a bicommutative $k$-bialgebra which is both an $\mathbb{N}$-graded $k$-vector space, an $\mathbb{N}$-graded $k$-algebra and an $\mathbb{N}$-graded $k$-coalgebra for the $\mathbb{N}$-grading on the underlying $k$-vector space a \emph{bicommutative $\mathbb{N}$-graded $k$-bialgebra}.

In the case of an $\mathbb{N}$-graded $k$-vector space which is both a commutative $\mathbb{N}$-graded $k$-algebra and a cocommutative $\mathbb{N}$-graded $k$-coalgebra for the $\mathbb{N}$-grading on the underlying $k$-vector space, the \cref{compatibility-intro-1,compatibility-intro-2} which ensure that we have a bicommutative $k$-bialgebra can equivalently be written under the following form:
\begin{equation} \label{compatibility-intro-graded-1}
\resizebox{!}{1.2cm}{\tikzfig{PRES22A}}~~=~~\resizebox{!}{1.2cm}{\tikzfig{PRES22B}}~~,\qquad
\resizebox{!}{1.2cm}{\tikzfig{PRES22C}}~~=~~\resizebox{!}{1.2cm}{\tikzfig{PRES22D}}~~,\qquad
\resizebox{!}{1.2cm}{\tikzfig{PRES22E}}~~=\qquad\qquad,
\end{equation}
\begin{equation} \label{compatibility-intro-graded-2}
\resizebox{!}{1.2cm}{\tikzfig{nabladeltaA}}\quad=\quad 
\scalebox{1}{$\underset{\substack{a,b,c,d \ge 0 \\ a+b=n \\ c+d=p \\ a+c=q \\ b+d=r}}{\sum}$} 
\resizebox{!}{1.2cm}{\tikzfig{nabladeltaB}}\quad.
\end{equation}
Note that \cref{compatibility-intro-graded-2} must be satisfied for all $n,p,q,r \ge 0$ such that $n+p=q+r$. This equation uses a sum of $k$-linear maps. The notion of a bicommutative $\mathbb{N}$-graded bimonoid will thus only make sense in a symmetric monoidal category which is additive. Let $(\mathsf{C},\otimes,I)$ be an additive symmetric monoidal category. We call \emph{bicommutative $\mathbb{N}$-graded bimonoid} any family of objects $(A_n)_{n \ge 0}$ which is both a commutative $\mathbb{N}$-graded monoid and a cocommutative $\mathbb{N}$-graded comonoid, and such that \cref{compatibility-intro-graded-1,compatibility-intro-graded-2} are satisfied.

\paragraph{v. $k[x]$ is a connected bicommutative $\mathbb{N}$-graded $k$-bialgebra.}
The bicommutative $\mathbb{N}$-graded $k$-bialgebra $k[x]$ satisfies a specific property: we have an isomorphism of $k$-vector spaces $k_0[x] \simeq k$. More precisely, the equation
\begin{equation} \label{connected-intro}
\resizebox{!}{1.2cm}{\tikzfig{PRES23}}~~=~~\resizebox{!}{1.2cm}{\tikzfig{PRES24}}
\end{equation}
is satisfied. We call a bicommutative $\mathbb{N}$-graded bimonoid in an additive symmetric monoidal category \emph{connected} if \cref{connected-intro} is satisfied.

\paragraph{vi. $k[x]$ is a special connected bicommutative $\mathbb{N}$-graded $k$-bialgebra.}
We're still concerned with $k[x]$ for $k$ a field of characteristic $0$. We compute
\begin{align*}
(\nabla_{n,p} \circ \Delta_{n,p})(x^{n+p})=&~\nabla_{n,p}(\Delta_{n,p}(x^{n+p})) \\
=&~\nabla_{n,p}\bigg(\binom{n+p}{n}x^n \otimes x^p\bigg) \\
=&~\binom{n+p}{n}x^{n+p}.
\end{align*}
We conclude that 
\begin{equation} \label{special-intro}
\nabla_{n,p} \circ \Delta_{n,p}=\binom{n+p}{n}\mathsf{id}_{k_{n+p}[x]}.
\end{equation}
We call a connected bicommutative $\mathbb{N}$-graded bimonoid in an additive symmetric monoidal category \emph{special} if \cref{special-intro} (where $k_{n+p}[x]$ is replaced with $A_{n+p}$) is satisfied for all $n,p \ge 0$. In string diagrams, \cref{special-intro} is written:
\begin{equation} \label{special-intro-diag}
\resizebox{!}{1.2cm}{\tikzfig{deltanabla}}~~=~~\binom{n+p}{n}\resizebox{!}{1.2cm}{\tikzfig{idn+p}}\quad.
\end{equation}
For brevity, we will call \emph{binomial bimonoid} any special connected bicommutative $\mathbb{N}$-graded bimonoid in an additive symmetric monoidal category.

An important remark that will be used in \cref{SUBSEC:FIVE-THREE} is that $k[x]$ is still a binomial $k$-bialgebra when the field of characteristic $0$ is replaced with any field $k$ (in fact, even with any commutative rig $k$). In order for the comultiplication to still make sense, we must define it directly with \cref{comult-without-division} and not with \cref{comult-with-division}.
\section{Symmetric powers}
In this section, we introduce the other main notion in this paper after binomial bimonoids: symmetric powers in $\mathbb{Q}_{\ge 0}$-linear symmetric monoidal categories. We will state our first main theorem which relates these two notions and our second main theorem which shows that bicommutativity and biassociativity are not required in the definition of a binomial bimonoid in a $\mathbb{Q}_{\ge 0}$-linear symmetric monoidal category.
\subsection{$\mathbb{Q}_{\ge 0}$-linear symmetric monoidal categories}
\begin{definition} \label{def:k-linear}
Let $R$ be a commutative rig. A \emph{$R$-linear symmetric monoidal category} is a symmetric monoidal category $(\mathsf{C},\otimes,I)$ such that every hom-set $\mathsf{C}[A,B]$ is an $R$-module and, moreover, the following equations are satisfied whenever they make sense: 
\begin{align*}
(f+g);h &= (f;h) + (g;h), &\qquad f;(g+h) &= (f;g)+(f;h), \\
0;f &= f;0=0, &\qquad (\lambda\cdot f);g &= \lambda\cdot (f;g) = f;(\lambda\cdot g), \\
(f+g) \otimes h &= (f \otimes h) + (g \otimes h), &\qquad f \otimes (g+h) &= (f \otimes g) + (f \otimes h), \\
0 \otimes f &= f \otimes 0 = 0, &\qquad (\lambda\cdot f) \otimes g &= \lambda\cdot (f \otimes g) = f \otimes (\lambda g).
\end{align*}
\end{definition}
As we will be mainly interested in $\mathbb{Q}_{\ge 0}$-linear symmetric monoidal categories, we now introduce several examples of $\mathbb{Q}_{\ge 0}$-linear symmetric monoidal categories.
\begin{example} \label{ex:mod}
The category $\mathsf{Mod}_{R}$ of modules over a commutative $\mathbb{Q}_{\ge 0}$-algebra $R$ can be made into a $\mathbb{Q}_{\ge 0}$-linear symmetric monoidal category $(\mathsf{Mod}_{R},\otimes,R)$. The tensor product of modules over a commutative rig is the natural generalization of the tensor product of modules over a commutative ring. Let $R$ together with a rig homomorphism $\rho\colon\mathbb{Q}_{\ge 0} \rightarrow R$ be a commutative $\mathbb{Q}_{\ge 0}$-algebra. The action of $\mathbb{Q}_{\ge 0}$ on hom-sets is given by
\begin{equation*}
(q\cdot f)(x)=\rho(q)f(x)
\end{equation*}
for every $x \in A$ where $f \in \mathsf{Mod}_R(A,B)$.  
\end{example}
All the other examples will be of the following type.
\begin{definition}
An additive symmetric monoidal category\footnote{We recall that additive symmetric monoidal categories were defined in part ii.~of subsection 2.1.} $\mathsf{C}$ is \emph{additively idempotent} if for every morphism $f\colon A \rightarrow B$, we have $f+f=f$.
\end{definition}
\begin{proposition} \label{q-on-hom-sets}
Let $(\mathsf{C},\otimes,I)$ be an additively idempotent additive symmetric monoidal category. Then $(\mathsf{C},\otimes,I)$ is a $\mathbb{Q}_{\ge 0}$-linear symmetric monoidal category by defining
\begin{equation*}
q \cdot f=
\left\{
\begin{aligned}
f~&\text{if }q \neq 0, \\
0~&\text{if }q=0
\end{aligned}\right.
\end{equation*}
for all $q \in \mathbb{Q}_{\ge 0}$ and morphisms $f$.
\end{proposition}
\begin{proof}
For all $q,q' \in \mathbb{Q}_{\ge 0}$, we have $qq'=0$ iff $q=0$ or $q'=0$. We deduce from this that $(qf);(q'f')=(qq')\cdot (f;f')$ and $(qf) \otimes (q'f')=(qq')\cdot (f \otimes f')$. It suffices to check the two identities in these two cases: 1. at least one of $q$ or $q'$ is $0$; 2. both $q$ and $q'$ are nonzero.
\end{proof} 
We now introduce several examples of additively idempotent additive symmetric monoidal category.
\begin{example}
The symmetric monoidal category $(\mathsf{Rel},\times,*)$ of sets and relations is an additively idempotent additive symmetric monoidal category, thus a $\mathbb{Q}_{\ge 0}$-linear symmetric monoidal category.
\end{example}
\begin{example} \label{ex:r-idempotent}
Let $R$ be an \emph{additively idempotent commutative rig}, that is, a commutative rig such that $r+r=r$ for every $r \in R$. Then $(\mathsf{Mod}_R,\otimes,R)$ is an additively idempotent additive symmetric monoidal category since for any morphism $f$, we have $(f+f)(x)=f(x)+f(x)=f(x+x)=f(x)$ for any $x$ in the domain of $f$, and thus $f+f=f$. It follows from \cref{q-on-hom-sets} that $(\mathsf{Mod}_R,\otimes,R)$ is a $\mathbb{Q}_{\ge 0}$-linear symmetric monoidal category.
\end{example}
We have obtained \cref{ex:r-idempotent} by applying \cref{q-on-hom-sets}, but \cref{ex:r-idempotent} can also be seen as a special case of \cref{ex:mod}. Indeed, for any additively idempotent commutative rig $R$, we obtain a rig homomorphism $\rho\colon\mathbb{Q}_{\ge 0} \rightarrow R$ by defining
\begin{equation*}
\rho(q)=
\left\{
\begin{aligned}
1~&\text{if } q \neq 0, \\
0~&\text{if } q=0,
\end{aligned}\right.
\end{equation*}
so that $R$ is a commutative $\mathbb{Q}_{\ge 0}$-algebra. The pointwise action of $\mathbb{Q}_{\ge 0}$ on hom-sets of $\mathsf{Mod}_R$ obtained from $\rho$ as in \cref{ex:mod} coincides with the action of $\mathbb{Q}_{\ge 0}$ on hom-sets defined in \cref{q-on-hom-sets}.
\begin{example}
Let $\mathbb{B}$ be the unique commutative rig whose underlying set is $\{0,1\}$ and such that $1+1=1$. We can specialize \cref{ex:r-idempotent} to the case when $R=\mathbb{B}$. The category $\mathsf{Mod}_\mathbb{B}$ is isomorphic to the category of idempotent commutative monoids which is isomorphic to the category $\mathsf{JoinSLat}$ of join-semilattices.\footnote{We define a \emph{join-semilattice} as a poset $(X,\le)$ such that every finite subset $Y \subseteq X$ admits a join $\vee\,Y$. A \emph{homomorphism of join-semilattices} is a function that preserves finite joins.} We thus obtain that the category of join-semilattices is a $\mathbb{Q}_{\ge 0}$-linear symmetric monoidal category.
\end{example}
\begin{example}
A \emph{suplattice} is a poset $X$ such that every family $(x_i)_{i \in I}$ of elements of $X$ admits a join $\vee_{i \in I}x_i$. A \emph{homomorphism of suplattices} is a function which preserves joins. Let $X,Y$ be two suplattices. We will define a suplattice $X \otimes Y$. Start with the suplattice $\mathcal{P}(X \times Y)$ and consider the smallest congruence\footnote{A congruence on a suplattice $X$ is an equivalence relation $\sim$ on $X$ such that if $(x_i)_{i \in I}$ and $(y_i)_{i \in I}$ are two families of elements of $X$ such that $x_i \sim y_i$ for every $i \in I$, then $\vee_{i \in I}x_i \sim \vee_{i \in I}y_i$.} $\sim$ on $\mathcal{P}(X \times Y)$ such that
\begin{enumerate}
\item $\{(\vee_{i \in I} x_i,y)\} \sim \{\vee_{i \in I}(x_i,y)\}$ for all families $(x_i)_{i \in I}$ of elements of $X$ and $y \in Y$;
\item $\{(x,\vee_{i \in I} y_i)\} \sim \{\vee_{i \in I}(x,y_i)\}$ for all $x \in X$ and families $(y_i)_{i \in I}$ of elements of $Y$.
\end{enumerate}
We define $X \otimes Y:=\mathcal{P}(X \times Y)/\sim$ and we write $x \otimes y$ for the equivalence class of $\{(x,y)\} \in \mathcal{P}(X \times Y)$. Every element of $X \otimes Y$ can be written under the form $\vee_{i \in I}(x_i \otimes y_i)$ for some family $(x_i)_{i \in I}$ of elements of $X$ and some family $(y_i)_{i \in I}$ of elements of $Y$.\footnote{In more detail: the joins in $X \otimes Y$ are defined as $\vee_{i \in I}[A_i]=[\vee_{i \in I}A_i]$ where $A_i \in \mathcal{P}(X \times Y)$ and $[A_i]$ denotes the equivalence class of $A_i$. These operations are well-defined thanks to $\sim$ being a congruence. The order on $X \otimes Y$ is then defined as $A \le B$ iff $A \vee B=B$ for all $A,B \in X \otimes Y$.}

The universal property of $X \otimes Y$ is as follows: Let $X,Y,Z$ be three suplattices. For every bihomomorphism $\phi\colon X \times Y \rightarrow Z$ (that is, a map which preserves joins in each variable) there exists a unique homomorphism of suplattices $\psi\colon X \otimes Y \rightarrow Z$ such that $\phi=\psi \circ \pi$ where $\pi\colon X \times Y \rightarrow X \otimes Y$ is defined by $\pi(x,y)=x \otimes y$.

We obtain a symmetric monoidal category $(\mathsf{Sup},\otimes,\mathcal{P}(*))$. The category $\mathsf{Sup}$ is enriched over itself, that is, every homset is a suplattice, with joins defined pointwisely, and composition maps are bihomomorphisms i.e.~we have
\begin{equation*}
(\vee_{i \in I}f_i);g=\vee_{i \in I}(f_i;g)
\end{equation*}
and
\begin{equation*}
f;(\vee_{i \in I}g_i)=\vee_{i \in I}(f;g_i)
\end{equation*}
whenever it makes sense. We also have
\begin{equation*}
(\vee_{i \in I}f_i) \otimes g=\vee_{i \in I}(f_i \otimes g)
\end{equation*}
and
\begin{equation*}
f \otimes (\vee_{i \in I}g_i)=\vee_{i \in I}(f\otimes g_i).
\end{equation*}
It follows that $(\mathsf{Sup},\otimes,\mathcal{P}(*))$ is an additively idempotent additive symmetric monoidal category with operations on morphisms given by $f+g:= f \vee g$ and $0:=\bot\colon X \rightarrow Y$.

See \cite{joyextension,Semigroups} for more information on suplattices and their tensor product. We note that \cite{Semigroups} contains an essentially complete proof that $(\mathsf{Sup},\otimes,\mathcal{P}(1))$ is a symmetric monoidal category.
\end{example}
\subsection{The $n$-th symmetric power of an object} \label{THREE}
In this subsection, we show that symmetric powers can be defined in three equivalent ways in a $\mathbb{Q}_{\ge 0}$-linear symmetric monoidal category.
\begin{definition}
Let $(\mathsf{C}, \otimes, I)$ be a symmetric monoidal category, and let $A \in \mathsf{C}$. For every $n \ge 0$, an $n$\emph{-th} \emph{(co)equalizer symmetric power} $S_{n}A$ of $A$ is, when it exists, defined as a (co)equalizer of the diagram
\begin{equation} \label{DIAG}
\begin{tikzcd}
A^{\otimes n} \arrow[rr, "\sigma", shift left=2] \arrow[rr, "\cdots", shift right=2] &  & A^{\otimes n}
\end{tikzcd}
\end{equation}
where there is one arrow for every $\sigma \in S_{n}$.
\end{definition}
\begin{remark}
By a (co)equalizer of the diagram \labelcref{DIAG}, we mean a (co)limit of \labelcref{DIAG}. If $n=0$ or $1$, then the diagram \labelcref{DIAG} contains a single arrow (recall that both $S_0$ and $S_1$ are the trivial group) which is either the identity on $I$ or the identity on $A$. Moreover, the (co)limit of a diagram consisting of a single arrow $f\colon A \rightarrow B$ is simply an isomorphism $B \simeq C$. Thus, a $0$-th symmetric power of $A$ is an isomorphism $I \simeq A_0$ and a $1$-th symmetric power of $A$ is an isomorphism $A \simeq A_1$.
\end{remark}
We will need the following lemma.
\begin{lemma} \label{coeq-is-epi}
Let $\mathsf{C}$ be any category and let $f_i\colon A \rightarrow B$ for $1 \le i \le n$ be morphisms in $\mathsf{C}$, where $n \ge 1$. Any coequalizer of
\begin{equation} \label{diag-for-epi}
\begin{tikzcd}[ampersand replacement=\&]
A \arrow[rr, "f_1", shift left=8] \arrow[rr, "f_2", shift left=3] \arrow[rr, "f_n", shift right=8] \arrow[rr, "\cdots", shift right=3] \&  \& B
\end{tikzcd}
\end{equation}
is an epimorphism.
\end{lemma}
\begin{proof}
Let $u\colon B \rightarrow X$ be a coequalizer of the diagam \labelcref{diag-for-epi}. Let $a,b\colon X \rightarrow Y$ be two parallel morphisms in $\mathsf{C}$ such that $u;a=u;b$. We then have $f_i;u;a=f_i;u;b$ for every $1 \le i \le n$. 

By the universal property of $u$, there exist a unique morphism $\phi\colon X \rightarrow Y$ such that $u;a=u;\phi$ and a unique morphism $\psi\colon X \rightarrow Y$ such that $u;b=u;\psi$. It follows that $a=b=\phi=\psi$.

We conclude that $u$ is an epimorphism.
\end{proof}
\begin{remark} \label{eq-is-mono}
The dual version of \cref{coeq-is-epi} is that any equalizer of a diagram consisting of $n \ge 1$ parallel morphisms is a monomorphism.
\end{remark}
\begin{proposition} \label{PROPSYM}
Let $(\mathsf{C}, \otimes, I)$ be a $\mathbb{Q}_{\ge 0}$-linear symmetric monoidal category, and let $X \in \mathsf{C}$. For every $n \ge 0$, we define bijections between:
\begin{enumerate}
\item[$(a)$]  coequalizers $
\begin{tikzcd}
A^{\otimes n} \arrow[r, "u"] & A_n
\end{tikzcd}
$
 of the diagram \labelcref{DIAG},
\item[$(b)$] couples
$
\begin{tikzcd}
A^{\otimes n} \arrow[r, "u", shift left] &A_n \arrow[l, "v", shift left]
\end{tikzcd}
$
such that $u;v = \frac{1}{n!}\underset{\sigma \in S_{n}}{\sum}\sigma$ and $v;u=\mathsf{id}_{A_n}$,
\item[$(c)$] equalizers $
\begin{tikzcd}
A_n \arrow[r, "v"] & A^{\otimes n}
\end{tikzcd}
$
of the diagram \labelcref{DIAG}
\end{enumerate}
by associating:
\begin{enumerate}
\item[$(a \rightarrow b)$] to $u$, the pair $(u,v)$ where $v$ is the unique arrow that makes the triangle commute in the following diagram:
\begin{equation*}
\begin{tikzcd}
A^{\otimes n} \arrow[rr, "\sigma", shift left=2] \arrow[rr, "\cdots", shift right=2] &  & A^{\otimes n} \arrow[rr, "u"] \arrow[rrd, "\frac{1}{n!}\underset{\sigma \in S_{n}}{\sum} \sigma"'] &  & A_n \arrow[d, "v", dashed] \\
                                                                                     &  &                                                                                                               &  & A^{\otimes n}           
\end{tikzcd}\quad,
\end{equation*}
\item[$(b \rightarrow a)$] to the pair $(u,v)$, the morphism $u$,
\item[$(b \rightarrow c)$] to the pair $(u,v)$, the morphism $v$,
\item[$(c \rightarrow b)$] to $v$, the pair $(u,v)$ where $u$ is the unique arrow that makes the triangle commute in the following diagram:
\begin{equation*}
\begin{tikzcd}
A_n \arrow[rr, "v"]                                                                                                    &  & A^{\otimes n} \arrow[rr, "\sigma", shift left=2] \arrow[rr, "\cdots", shift right=2] &  & A^{\otimes n} \\
A^{\otimes n} \arrow[rru, "\frac{1}{n!}\underset{\sigma \in S_{n}}{\sum} \sigma"'] \arrow[u, "u", dashed] &  &                                                                                      &  &              
\end{tikzcd}\quad.
\end{equation*}
\end{enumerate}
\end{proposition}
\begin{proof}
We first check that the images satisfy the required property.
\begin{enumerate}
\item[$(a \rightarrow b)$] 
To start, $v$ is well-defined because for every $\rho \in S_n$ we have
\begin{equation*}
\rho;\bigg(\frac{1}{n!}\underset{\sigma \in S_n}{\sum}\sigma\bigg)
=\frac{1}{n!}\underset{\sigma \in S_n}{\sum}\rho;\sigma
=\frac{1}{n!}\underset{\tau \in S_n}{\sum}\tau
\end{equation*}
 by a change of variable. By definition of $v$, we have \[u;v = \frac{1}{n!}\underset{\sigma \in S_{n}}{\sum}\sigma.\] 
 The identity \[v;u = \mathsf{id}_{A_n}\] follows from $u$, as a coequalizer of the diagram \labelcref{DIAG}, being an epimorphism by \cref{coeq-is-epi}. Indeed, we have 
\begin{equation*}
u;v;u = \bigg(\frac{1}{n!}\underset{\sigma \in S_{n}}{\sum}\sigma\bigg);u = \frac{1}{n!}\underset{\sigma \in S_{n}}{\sum}\sigma;u = \frac{1}{n!}\underset{\sigma \in S_{n}}{\sum}u = \frac{1}{n!}(n!u) =u=u;\mathsf{id}_{A_n},\] thus \[v;u = \mathsf{id}_{A_n}.
\end{equation*}
\item[$(b \rightarrow a)$] Let $\sigma\colon A^{\otimes n} \rightarrow A^{\otimes n}$ be any permutation. We have
\begin{equation*}
\sigma;u=\sigma;u;\mathsf{id}_{A_n}=\sigma;u;(v;u)=\sigma;(u;v);u=\sigma;\bigg(\frac{1}{n!}\underset{\rho \in S_{n}}{\sum}\rho\bigg);u = \frac{1}{n!}\underset{\rho \in S_{n}}{\sum}\sigma;\rho;u = \frac{1}{n!}\underset{\gamma \in S_{n}}{\sum}\gamma;u.
\end{equation*}
Therefore,
\begin{equation*}
u = \sigma^{-1};\sigma;u = \sigma^{-1};\bigg(\frac{1}{n!}\underset{\gamma \in S_{n}}{\sum}\gamma;u\bigg ) = \frac{1}{n!}\underset{\gamma \in S_{n}}{\sum}\sigma^{-1};\gamma;u = \frac{1}{n!}\underset{\rho \in S_{n}}{\sum}\rho;u = \sigma;u.
\end{equation*} 
We proved that $u$ coequalizes the diagram \labelcref{DIAG}.

Now, suppose that $f\colon A^{\otimes n} \rightarrow Y$ is such that $\sigma;f=f$ for any permutation $\sigma\colon A^{\otimes} \rightarrow A^{\otimes n}$.
The triangle commutes in the following diagram:
\begin{equation*}
\begin{tikzcd}
A^{\otimes n} \arrow[rr, "\sigma", shift left=2] \arrow[rr, "...", shift right=2] &  & A^{\otimes n} \arrow[rr, "f"] \arrow[rrd, "u"'] &  & Y                   \\
                                                                                  &  &                                                 &  & X \arrow[u, "v;f"']
\end{tikzcd}.
\end{equation*}
Indeed, we have
\begin{equation*}
u;v;f=\bigg(\frac{1}{n!}\underset{\sigma \in S_{n}}{\sum}\sigma\bigg);f = \frac{1}{n!}\underset{\sigma \in S_{n}}{\sum}\sigma;f = \frac{1}{n!}\underset{\sigma \in S_{n}}{\sum}f = f.
\end{equation*}
Finally, suppose that $\phi\colon A_n \rightarrow Y$ makes the triangle commute in the following diagram:
\begin{equation*}
\begin{tikzcd}
A^{\otimes n} \arrow[rr, "\sigma", shift left=2] \arrow[rr, "\cdots", shift right=2] &  & A^{\otimes n} \arrow[rr, "f"] \arrow[rrd, "u"'] &  & Y                    \\
                                                                                  &  &                                                 &  & A_n \arrow[u, "\phi"']
\end{tikzcd}\quad.
\end{equation*}
Then \[v;f = v;u;\phi=\mathsf{id}_{A_n};\phi=\phi.\]
\item[$(b \rightarrow c)$] Let $\sigma\colon A^{\otimes n} \rightarrow A^{\otimes n}$ be any permutation. We have
\begin{equation*}
v;\sigma = \mathsf{id}_{A_n};v;\sigma=(v;u);v;\sigma = v;(u;v);\sigma
=v;\bigg(\frac{1}{n!}\underset{\rho \in S_{n}}{\sum}\rho\bigg);\sigma
= \frac{1}{n!}\underset{\rho \in S_{n}}{\sum}v;\rho;\sigma = \frac{1}{n!}\underset{\gamma \in S_{n}}{\sum}v;\gamma.
\end{equation*} 
Therefore,
\begin{equation*}
v = v;\sigma;\sigma^{-1} = \frac{1}{n!}\underset{\gamma \in S_{n}}{\sum}v;\gamma;\sigma^{-1} = \frac{1}{n!}\underset{\rho \in S_{n}}{\sum}v;\rho = v;\sigma.
\end{equation*}
Now, suppose that $f\colon Y \rightarrow A^{\otimes n}$ is such that $\sigma;f=f$ for any permutation $\sigma\colon A^{\otimes} \rightarrow A^{\otimes n}$. The triangle commutes in the following diagram:
\begin{equation*}
\begin{tikzcd}
Y \arrow[rr, "f"] \arrow[d, "f;u"'] &  & A^{\otimes n} \arrow[rr, "\sigma", shift left=2] \arrow[rr, "\cdots", shift right=2] &  & A^{\otimes n} \\
A_n \arrow[rru, "v"']                 &  &                                                                                   &  &              
\end{tikzcd}\quad.
\end{equation*}
Indeed, 
\begin{equation*}
f;u;v = f;\bigg(\frac{1}{n!}\underset{\sigma \in S_{n}}{\sum}\sigma\bigg) = \frac{1}{n!}\underset{\sigma \in S_{n}}{\sum}f;\sigma = \frac{1}{n!}\underset{\sigma \in S_{n}}{\sum}f = f.
\end{equation*}
Finally, suppose that $\phi\colon Y \rightarrow A_n$ makes the triangle commute in the following diagram:
\begin{equation*}
\begin{tikzcd}
Y \arrow[rr, "f"] \arrow[d, "\phi"'] &  & A^{\otimes n} \arrow[rr, "\sigma", shift left=2] \arrow[rr, "\cdots", shift right=2] &  & A^{\otimes n} \\
A_n \arrow[rru, "v"']                  &  &                                                                                   &  &              
\end{tikzcd}\quad.
\end{equation*}
Then
\begin{equation*}
f;u=\phi;v;u = \phi;\mathsf{id}_{A_n} = \phi.
\end{equation*}
\item[$(c \rightarrow b)$] To start, $u$ is well-defined because for every $\rho \in S_n$, we have
\begin{equation*}
\bigg(\frac{1}{n!}\underset{\sigma \in S_n}{\sum}\sigma\bigg);\rho = \frac{1}{n!}\underset{\sigma \in S_n}{\sum}\sigma;\rho = \frac{1}{n!}\underset{\tau \in S_n}{\sum}\tau
\end{equation*}
by a change of variable. By definition of $u$, we have
\begin{equation*}
u;v = \frac{1}{n!} \underset{\sigma \in S_{n}}{\sum}\sigma.
\end{equation*}
The identity
\begin{equation*}
v;u = \mathsf{id}_{A_n}
\end{equation*}
follows from $v$, as a limit of the diagram \labelcref{DIAG}, being a monomorphism by \cref{eq-is-mono}. 
Indeed, we have 
\begin{equation*}
v;u;v = v;\bigg(\frac{1}{n!} \underset{\sigma \in S_{n}}{\sum}\sigma\bigg) = \frac{1}{n!}\underset{\sigma \in S_{n}}{\sum}v \sigma
= \frac{1}{n!}\underset{\sigma \in S_{n}}{\sum}v = v,
\end{equation*}
thus 
\begin{equation*}
(v;u);v = \mathsf{id}_{A_n};v
\end{equation*}
and therefore 
\begin{equation*}
v;u = \mathsf{id}_{A_n}.
\end{equation*}
\end{enumerate}
We now verify the bijectivities by checking that the appropriate composites are identities.
\begin{enumerate}
\item[$(a \rightarrow b \rightarrow a)$] We associate a couple $(u,v)$ to $u$ and then $u$ to $(u,v)$.
\item[$(b \rightarrow a \rightarrow b)$] We associate $u$ to $(u,v)$ and then $(u,v)$ to $u$ because we know that $v$ is the unique morphism $A_n \rightarrow A^{\otimes n}$ such that 
\[u;v = \frac{1}{n!} \underset{\sigma \in S_{n}}{\sum}\sigma.\]
\item[$(b \rightarrow c \rightarrow b)$] We associate $v$ to $(u,v)$ and then $(u,v)$ to $v$ because we know that $u$  is the unique morphism $A^{\otimes n} \rightarrow A_n$ such that 
\[u;v = \frac{1}{n!} \underset{\sigma \in S_{n}}{\sum}\sigma.\]
\item[$(c \rightarrow b \rightarrow c)$] We associate a couple $(u,v)$ to $v$ and then $v$ to $(u,v)$.\qedhere
\end{enumerate}
\end{proof}
\begin{example} \label{ex:1a}
Let $R$ be a commutative $\mathbb{Q}_{\ge 0}$-algebra. In $(\mathsf{Mod}_R,\otimes,R)$, we define for every module $M$, the modules $S^0M:=R$, $S^1M:=M$ and for every $n \ge 2$, the module $S^nM$ by the formula $S^nM:=M^{\otimes n}/\sim$ where $\sim$ is the smallest congruence\footnote{A congruence on an $R$-module $E$ is an equivalence relation $\sim$ on $E$ such that: 1.~if $e_1 \sim e_2$, then $\lambda e_1 \sim \lambda e_2$ for every $\lambda \in R$; 2.~if $e_1 \sim e_2$ and $e_3 \sim e_4$, then $e_1+e_3 \sim e_2+e_4$.} on $M^{\otimes n}$ such that $m_1 \otimes \dots \otimes m_n \sim m_{\sigma(1)} \otimes \dots \otimes m_{\sigma(n)}$ for every $\sigma \in S_n$. The equivalence class of $x_1 \otimes \dots \otimes x_n$ in $S^nM$ is denoted by $x_1 \otimes_s \dots \otimes_s x_n$. For every $n \ge 0$, the module $S^nM$ satisfies the following universal property: Let $N$ be a module and let $\phi\colon M^n \rightarrow N$ be a symmetric multilinear map, that is, a multilinear map which is invariant by permutation of its entries.\footnote{Recall that both $S_0$ and $S_1$ are the trivial group. A symmetric multilinear map $\phi\colon M^1=M \rightarrow N$ is just a linear map and a symmetric multilinear map $\phi\colon M^0=* \rightarrow N$ is just a function, which can be identified with the point $\phi(\bullet) \in N$ where $\bullet$ is the unique element in $*$.} Then, there exists a unique linear map $\psi\colon S^n X \rightarrow Y$ such that $\phi=\psi \circ u$ where 
$u\colon X^n \rightarrow S^nX$ is the symmetric multilinear map defined by\footnote{When $n=0$, $u\colon X^0=* \rightarrow S^0X=R$ is the function given by $u(\bullet)=1$.}
\begin{equation*}
u(x_1,\dots,x_n)=x_1 \otimes_s \dots \otimes_s x_n.
\end{equation*}
For every $n \ge 0$, the module $S^nM$ is an $n$-th symmetric power of $M$ as witnessed by the maps
\begin{equation*}
\begin{tikzcd}
M^{\otimes n} \arrow[r, "r_n", shift left] &S^nM \arrow[l, "s_n", shift left]
\end{tikzcd}
\end{equation*}
defined for every $n \ge 0$ by
\begin{equation*}
r_n(m_1 \otimes \dots \otimes m_n)=m_1 \otimes_s \dots \otimes_s m_n
\end{equation*}
and
\begin{equation*}
s_n(m_1 \otimes_s \dots \otimes_s m_n)=\frac{1}{n!}\underset{\sigma \in S_n}{\sum}m_{\sigma(1)} \otimes \dots \otimes m_{\sigma(n)}.
\end{equation*}
The notations $m_1 \otimes \dots \otimes m_n$ and $m_1 \otimes_s \dots \otimes_s m_n$ must be interpreted as denoting a same scalar $\lambda \in R$ when $n=0$. We immediately obtain that $r_n;s_n=\frac{1}{n!}\underset{\sigma \in S_n}{\sum}\sigma$ and $s_n;r_n=\mathsf{id}_{S^nM}$.
\end{example}
\begin{example} \label{ex:2a}
In $(\mathsf{Rel},\times,*)$, we define for every set $X$, the sets $\mathcal{M}_0X:=*$, $\mathcal{M}_1X:=X$ and for every $n \ge 2$, $\mathcal{M}_nX$ as the set of all the multisets of elements of $X$ of cardinality $n$. That is, $\mathcal{M}_nX:=\{[x_1,\dots,x_n],~x_1,\dots,x_n \in X\}$.

For every $n \ge 0$, the set $\mathcal{M}_nX$ is an $n$-th symmetric power of $X$ as witnessed by the maps
\begin{equation*}
\begin{tikzcd}
M^n \arrow[r, "r_n", shift left] &\mathcal{M}_nX \arrow[l, "s_n", shift left]
\end{tikzcd}
\end{equation*}
\begin{equation*}
r_n=\{((x_1,\dots,x_n),[x_1,\dots,x_n])~|~(x_1,\dots,x_n) \in X^n\}
\end{equation*}
and
\begin{equation*}
s_n=\{([x_1,\dots,x_n],(x_{\sigma(1)},\dots,x_{\sigma(n)}))~|~(x_1,\dots,x_n) \in X^n,~\sigma \in S_n\}.
\end{equation*}
Indeed, we have $r_n;s_n=\underset{\sigma \in S_n}{\sum}\sigma=\frac{1}{n!}\underset{\sigma \in S_n}{\sum}\sigma$ and $s_n;r_n=\mathsf{id}_{\mathcal{M}_nX}$.

Since we defined $\mathcal{M}_0X:=*$ and $\mathcal{M}_1X:=X$, in the formulas for $r_0,s_0,r_1,s_1$, we must read $()=[]:=\bullet \in *$ and $(x_1)=[x_1]:=x_1 \in X$.
\end{example}
\begin{example} \label{ex:3a}
In $(\mathsf{Sup},\otimes,\mathcal{P}(*))$, we define for every suplattice $X$, the suplattices $S^0X:=\mathcal{P}(*)$, $S^1X:=X$ and for every $n \ge 2$ the suplattice $S^nX$ by the formula $S^nX:=X^{\otimes n}/\sim$ where $\sim$ is the smallest congruence on $X^{\otimes n}$ such that $x_1 \otimes \dots \otimes x_n \sim x_{\sigma(1)} \otimes \dots \otimes x_{\sigma(n)}$ for all $x_1,\dots,x_n \in X$ and  $\sigma \in S_n$. As for modules, the equivalence class of $x_1 \otimes \dots \otimes x_n$ in $S^nM$ is denoted by $x_1 \otimes_s \dots \otimes_s x_n$.

The universal property of $S^nX$ is a follows: Let $X,Y$ be suplattices and let $\phi\colon X^n \rightarrow Y$ be a symmetric multimorphism of suplattices, that is, a map which preserves joins in each variable and is invariant by permutation of its entries.\footnote{As in the case of modules, a symmetric multimorphism $\phi\colon X^1=X \rightarrow Y$ is just a homomorphism of suplattices and a symmetric multimorphism $\phi\colon X^0=* \rightarrow Y$ is just a function, which can be identified with the point $\phi(\bullet) \in Y$.} Then, there exists a unique homomorphism of suplattices $\psi\colon S^n X \rightarrow Y$ such that $\phi=\psi \circ u$ where 
$u\colon X^n \rightarrow S^nX$ is the symmetric multimorphism of suplattices defined by\footnote{When $n=0$, $u\colon X^0=* \rightarrow S^0X=\mathcal{P}(*)$ is the function given by $u(\bullet)=*$.}
\begin{equation*}
u(x_1,\dots,x_n)=x_1 \otimes_s \dots \otimes_s x_n.
\end{equation*}
For every $n \ge 0$, the suplattice $S^nX$ is an $n$-th symmetric power of $X$ as witnessed by the maps
\begin{equation*}
\begin{tikzcd}
X^{\otimes n} \arrow[r, "r_n", shift left] &S^nX \arrow[l, "s_n", shift left]
\end{tikzcd}
\end{equation*}
defined by
\begin{equation*}
r_n(x_1 \otimes \dots \otimes x_n)=x_1 \otimes_s \dots \otimes_s x_n
\end{equation*}
and
\begin{equation*}
s_n(x_1 \otimes_s \dots \otimes_s x_n)=\underset{\sigma \in S_n}{\bigvee}x_{\sigma(1)} \otimes \dots \otimes x_{\sigma(n)}.
\end{equation*}
The notations $x_1 \otimes \dots \otimes x_n$ and $x_1 \otimes_s \dots \otimes_s x_n$ must be interpreted as denoting a same element in $X^{\otimes 0}=S^0X=\mathcal{P}(*)$ when $n=0$. We immediately obtain that $r_n;s_n=\underset{\sigma \in S_n}{\vee}\sigma=\underset{\sigma \in S_n}{\sum}\sigma=\frac{1}{n!}\underset{\sigma \in S_n}{\sum}\sigma$ and $s_n;r_n=\mathsf{id}_{S^nX}$.
\end{example}
\subsection{Main theorems} \label{subsec:main-th}
We first introduce permutation splittings which are our formalization of a family of all the symmetric powers of an object in a $\mathbb{Q}_{\ge 0}$-linear symmetric monoidal category.
\begin{definition} \label{def-perm-splitting}
A \emph{permutation splitting} in a $\mathbb{Q}_{\ge 0}$-linear symmetric monoidal category $(\mathsf{C},\otimes,I)$ is a graded object $(A_n)_{n \ge 0}$ together with two families of morphisms
\begin{equation*}
(r_n\colon A_1^{\otimes n} \rightarrow A_n)_{n \ge 0}
\end{equation*}
and
\begin{equation*}
(s_n\colon A_n \rightarrow A_1^{\otimes n})_{n \ge 0}
\end{equation*}
such that 
\begin{equation} \label{perm-split-equation-1}
r_n;s_n=\frac{1}{n!}\underset{\sigma \in S_n}{\sum}\sigma
\end{equation}
and
\begin{equation} \label{perm-split-equation-2}
s_n;r_n=\mathsf{id}_{A_n}
\end{equation}
for every $n \in \mathbb{N}$, and
\begin{equation} \label{eq-trivial}
r_1=s_1=\mathsf{id}_{A_1}.
\end{equation}
\end{definition}
A permutation splitting is thus a graded object $(A_n)_{n \ge 0}$ where $A_n$ is an $n$-th symmetric power of $A_1$, together with morphisms exhibiting $A_n$ as an $n$-th symmetric power of $A_1$. We require by \cref{eq-trivial} that the morphisms exhibiting $A_1$ as a first symmetric power of itself are identities. 
\begin{example} \label{ex:1b}
Let $R$ be a commutative $\mathbb{Q}_{\ge 0}$-algebra. From \cref{ex:1a}, we obtain for every module $M \in \mathsf{Mod}_R$ a permutation splitting in $(\mathsf{Mod}_R,\otimes,R)$ given by the graded object $(S^nM)_{n \ge 0}$ together with the maps $(r_n\colon M^{\otimes n} \rightarrow S^nM)_{n \ge 0}$ and $(s_n\colon S^nM \rightarrow M^{\otimes n})_{n \ge 0}$ defined by
\begin{equation*}
r_n(m_1 \otimes \dots \otimes m_n)=m_1 \otimes_s \dots \otimes_s m_n
\end{equation*}
and
\begin{equation*}
s_n(m_1 \otimes_s \dots \otimes_s m_n)=\frac{1}{n!}\underset{\sigma \in S_n}{\sum}m_{\sigma(1)} \otimes \dots \otimes m_{\sigma(n)}.
\end{equation*}
We have $S^1M=M=M^{\otimes 1}$ and $r_1=s_1=\mathsf{id}_M$.
\end{example}
\begin{example} \label{ex:2b}
Let $X$ be any set. From \cref{ex:2a}, we obtain a permutation splitting in $(\mathsf{Rel},\times,*)$ given by the graded object $(\mathcal{M}_nX)_{n \ge 0}$ together with the maps $(r_n\colon X^n \rightarrow \mathcal{M}_nX)_{n \ge 0}$ and $(s_n\colon\mathcal{M}_nX \rightarrow X^n)_{n \ge 0}$ defined by
\begin{equation*}
r_n=\{((x_1,\dots,x_n),[x_1,\dots,x_n])~|~(x_1,\dots,x_n) \in X^n\}
\end{equation*}
and
\begin{equation*}
s_n=\{([x_1,\dots,x_n],(x_{\sigma(1)},\dots,x_{\sigma(n)}))~|~(x_1,\dots,x_n) \in X^n,~\sigma \in S_n\}.
\end{equation*}
We have $\mathcal{M}_1X=X=X^1$ and $r_1=s_1=\mathsf{id}_X$.
\end{example}
\begin{example} \label{ex:3b}
Let $X$ be a suplattice. From \cref{ex:3a}, we obtain a permutation splitting in $(\mathsf{Sup},\otimes,\mathcal{P}(*))$ given by the graded object $(S^nX)_{n \ge 0}$ together with the maps $(r_n\colon X^{\otimes n} \rightarrow S^nX)_{n \ge 0}$ and $(s_n\colon S^nX \rightarrow X^{\otimes n})_{n \ge 0}$ defined by
\begin{equation*}
r_n(m_1 \otimes \dots \otimes m_n)=m_1 \otimes_s \dots \otimes_s m_n
\end{equation*}
and
\begin{equation*}
s_n(m_1 \otimes_s \dots \otimes_s m_n)=\underset{\sigma \in S_n}{\bigvee}m_{\sigma(1)} \otimes \dots \otimes m_{\sigma(n)}.
\end{equation*}
We have $S^1X=X=X^{\otimes 1}$ and $r_1=s_1=\mathsf{id}_X$.
\end{example}
We now introduce the notion of a binomial bimonoid which will be the bialgebraic structure characterizing families of symmetric powers in a $\mathbb{Q}_{\ge 0}$-linear symmetric monoidal category.
\begin{definition} \label{def:bin-bimonoid}
Let $(\mathsf{C},\otimes,I)$ be an additive symmetric monoidal category. A \emph{binomial bimonoid} is a special connected bicommutative $\mathbb{N}$-graded bimonoid, as defined in \cref{SEC-TWO}. That is, a binomial bimonoid is a graded object $(A_n) \in \mathsf{C}^\mathbb{N}$ together with maps $(\nabla_{n,p}\colon A_n \otimes A_p \rightarrow A_{n+p})_{n,p \ge 0}$, $(\Delta_{n,p}\colon A_{n+p} \rightarrow A_n \otimes A_p)_{n,p \ge 0}$, $\eta\colon I \rightarrow A_0$ and $\epsilon\colon A_0 \rightarrow I$ subject to the twelve axioms in \cref{fig:bialg-axiom}.\footnote{In more detail, we ask that the axioms hold for all appropriate $n,p,q,r \ge 0$. That is, the axioms involving only $n$ must hold for every $n \ge 0$, the axioms involving $n$ and $p$ must hold for all $n,p \ge 0$, the axioms involvoving $n,p,q$ must hold for all $n,p,q \ge 0$ and the axiom involving $n,p,q,r$ must hold for all $n,p,q,r \ge 0$ such that $n+p=q+r$.}
\end{definition}
\begin{figure}
\begin{framed}
\begin{align*}
\resizebox{!}{1.2cm}{\tikzfig{PRES18A}} & ~~=~~ \resizebox{!}{1.2cm}{\tikzfig{PRES18B}}
&
\resizebox{!}{1.2cm}{\tikzfig{PRES18C}} & ~~=~~ \resizebox{!}{1.2cm}{\tikzfig{PRES18D}}
&
\resizebox{!}{1.2cm}{\tikzfig{PRES18E}} & ~~=~~ \resizebox{!}{1.2cm}{\tikzfig{PRES18F}} \\[18pt]  
\resizebox{!}{1.2cm}{\tikzfig{PRES21A}} & ~~=~~ \resizebox{!}{1.2cm}{\tikzfig{PRES21B}}
&
\resizebox{!}{1.2cm}{\tikzfig{PRES21C}} & ~~=~~ \resizebox{!}{1.2cm}{\tikzfig{PRES21D}}
&
\resizebox{!}{1.2cm}{\tikzfig{PRES21E}} & ~~=~~ \resizebox{!}{1.2cm}{\tikzfig{PRES21F}} \\[18pt]
\resizebox{!}{1.2cm}{\tikzfig{PRES22A}} & ~~=~~ \resizebox{!}{1.2cm}{\tikzfig{PRES22B}}
&
\resizebox{!}{1.2cm}{\tikzfig{PRES22C}} & ~~=~~ \resizebox{!}{1.2cm}{\tikzfig{PRES22D}}
&
\resizebox{!}{1.2cm}{\tikzfig{PRES22E}} & ~~=
\end{align*}
\begin{align*}
\resizebox{!}{1.2cm}{\tikzfig{nabladeltaA}}~~=~ ~
\scalebox{1}{$\underset{\substack{a,b,c,d \ge 0 \\ a+b=n \\ c+d=p \\ a+c=q \\ b+d=r}}{\sum}$} 
\resizebox{!}{1.2cm}{\tikzfig{nabladeltaB}}
\end{align*}
\begin{align*}
\resizebox{!}{1.2cm}{\tikzfig{PRES23}}~~=~~\resizebox{!}{1.2cm}{\tikzfig{PRES24}}~\qquad\qquad
\resizebox{!}{1.2cm}{\tikzfig{deltanabla}}~~=~~\binom{n+p}{n}\resizebox{!}{1.2cm}{\tikzfig{idn+p}}
\end{align*}
\caption{Axioms for a binomial bimonoid}
\label{fig:bialg-axiom}
\end{framed}
\end{figure}
\begin{figure}
\begin{framed}
\begin{align*}
\resizebox{!}{1.2cm}{\tikzfig{PRES18A}} & ~~=~~ \resizebox{!}{1.2cm}{\tikzfig{PRES18B}}
&
\resizebox{!}{1.2cm}{\tikzfig{PRES18C}} & ~~=~~ \resizebox{!}{1.2cm}{\tikzfig{PRES18D}}
&
\resizebox{!}{1.2cm}{\tikzfig{PRES18E}} & ~~=~~ \resizebox{!}{1.2cm}{\tikzfig{PRES18F}} \\[18pt]  
\resizebox{!}{1.2cm}{\tikzfig{PRES21A}} & ~~=~~ \resizebox{!}{1.2cm}{\tikzfig{PRES21B}}
&
\resizebox{!}{1.2cm}{\tikzfig{PRES21C}} & ~~=~~ \resizebox{!}{1.2cm}{\tikzfig{PRES21D}}
&
\resizebox{!}{1.2cm}{\tikzfig{PRES21E}} & ~~=~~ \resizebox{!}{1.2cm}{\tikzfig{PRES21F}} 
\end{align*}
\begin{align*}
\resizebox{!}{1.2cm}{\tikzfig{PRES22E}} & ~~=~~
&
\resizebox{!}{1.2cm}{\tikzfig{nabladeltaA}}& ~~=~~
\scalebox{1}{$\underset{\substack{a,b,c,d \ge 0 \\ a+b=n \\ c+d=p \\ a+c=q \\ b+d=r}}{\sum}$} 
\resizebox{!}{1.2cm}{\tikzfig{nabladeltaB}} \\[18pt]
\resizebox{!}{1.2cm}{\tikzfig{PRES23}}& ~~=~~\resizebox{!}{1.2cm}{\tikzfig{PRES24}}
&
\resizebox{!}{1.2cm}{\tikzfig{deltanabla}}& ~~=~~\binom{n+p}{n}\resizebox{!}{1.2cm}{\tikzfig{idn+p}}
\end{align*}
\caption{Equivalent axioms for a binomial bimonoid}
\label{fig:bialg-axiom-nonred}
\end{framed}
\end{figure}
\begin{figure}
\begin{framed}
\begin{equation*}
\begin{aligned}
\resizebox{!}{1.2cm}{\tikzfig{PRES18A}} & ~~=~~ \resizebox{!}{1.2cm}{\tikzfig{PRES18B}}
&
\resizebox{!}{1.2cm}{\tikzfig{PRES18A-}} & ~~=~~ \resizebox{!}{1.2cm}{\tikzfig{PRES18B}} \\[18pt]
\resizebox{!}{1.2cm}{\tikzfig{PRES21A}} & ~~=~~ \resizebox{!}{1.2cm}{\tikzfig{PRES21B}}
&
\resizebox{!}{1.2cm}{\tikzfig{PRES21A-}} & ~~=~~ \resizebox{!}{1.2cm}{\tikzfig{PRES21B}} \\[18pt]
\resizebox{!}{1.2cm}{\tikzfig{nabladeltaA}}
&~~=~~
\sum_{\substack{a,b,c,d \ge 0 \\ a+b=n \\ c+d=p \\ a+c=q \\ b+d=r}}
\resizebox{!}{1.2cm}{\tikzfig{nabladeltaB}}
& \hspace{4.5em} 
\resizebox{!}{1.2cm}{\tikzfig{PRES22E}}
&~~=~~ \\[18pt]
\resizebox{!}{1.2cm}{\tikzfig{deltanabla}}
&~~=~~
\binom{n+p}{n}\resizebox{!}{1.2cm}{\tikzfig{idn+p}}
& \hspace{4.5em} 
\resizebox{!}{1.2cm}{\tikzfig{PRES23}}
&~~=~~\resizebox{!}{1.2cm}{\tikzfig{PRES24}}
\end{aligned}
\end{equation*}
\caption{Equivalent axioms for a binomial bimonoid in a $\mathbb{Q}_{\ge 0}$-linear symmetric monoidal category}
\label{fig:bialg-axiom-short}
\end{framed}
\end{figure}
Recall that \cref{SEC-TWO} was focused on the binomial bimonoid $k[x]$ for $k$ a field of characteristic $0$. Other examples of binomials bimonoids will be presented in \cref{SUBSEC:FIVE-THREE} and \cref{SEC:LAST}. Until there, the paper will discuss binomial bimonoids from an abstract point of view, our main concern being to prove \cref{main-theorem,main-theorem-2}.

Two axioms are redundant in \cref{fig:bialg-axiom} so that an equivalent set of axioms for binomial bimonoids is given by \cref{fig:bialg-axiom-nonred}. This follows from \cref{prop:red} and \cref{rem:red}.
\begin{proposition} \label{prop:red}
Let $(\mathsf{C},\otimes,I)$ be an additive symmetric monoidal category. The axiom
\begin{equation*}
\resizebox{!}{1.2cm}{\tikzfig{PRES22A}}~~=~~\resizebox{!}{1.2cm}{\tikzfig{PRES22B}}
\end{equation*}
can be omitted from the definition of a binomial bimonoid.
\end{proposition}
\begin{proof}
We compute:
\begin{equation*}
\resizebox{!}{1.2cm}{\tikzfig{PRES22A}}~=~\resizebox{!}{1.2cm}{\tikzfig{INTER1}}~=~\resizebox{!}{1.2cm}{\tikzfig{INTER2}}~=~\resizebox{!}{1.2cm}{\tikzfig{PRES22B}}~.
\end{equation*}
\end{proof}
\begin{remark} \label{rem:red}
The dual of \cref{prop:red} is that the axiom
\begin{equation*}
\resizebox{!}{1.2cm}{\tikzfig{PRES22C}}~~=~~\resizebox{!}{1.2cm}{\tikzfig{PRES22D}}
\end{equation*}
can be omitted from the definition of a binomial bimonoid.
\end{remark}
We can now state our two main theorems about permutation splittings and binomial bimonoids.
\begin{theorem} \label{main-theorem} Let $(\mathsf{C},\otimes,I)$ be a $\mathbb{Q}_{\ge 0}$-linear symmetric monoidal category. Let $(A_n)_{n \ge 0} \in \mathsf{C}^\mathbb{N}$ be a graded object. There is a bijection between:
\begin{enumerate}
\item maps $(r_n)_{n \ge 0}$, $(s_n)_{n \ge 0}$ making $(A_n)_{n \ge 0}$ into a permutation splitting;
\item maps $(\nabla_{n,p})_{n,p \ge 0}$, $(\Delta_{n,p})_{n,p \ge 0}$, $\eta$, $\epsilon$ making $(A_n)_{n \ge 0}$ into a binomial bimonoid.
\end{enumerate}
Starting from a permutation splitting, we define
\begin{equation} \label{perm-from-bimon-1}
\eta:=r_0,\quad\epsilon:=s_0,
\end{equation}
and
\begin{equation} \label{perm-from-bimon-2}
\nabla_{n,p}:=s_n \otimes s_p;r_{n+p},\quad \Delta_{n,p}:=\binom{n+p}{n}s_{n+p};r_n \otimes r_p
\end{equation}
for all $n,p \ge 0$.

Starting from a binomial bimonoid, we define
\begin{equation} \label{def-fun:-1}
r_0:= \eta,\quad s_0:=\epsilon,
\end{equation}
and
\begin{equation} \label{def-fun:2}
\resizebox{!}{1.2cm}{\tikzfig{r-n}}\quad:=\quad \resizebox{!}{1.2cm}{\tikzfig{rn-e}}~,\qquad
\qquad
\resizebox{!}{1.2cm}{\tikzfig{s-n}}\quad:=\quad \frac{1}{n!}\resizebox{!}{1.2cm}{\tikzfig{sn-e}}\quad
\end{equation}
for every $n \ge 1$. (If $n=1$, the RHS in the definitions of $r_n$ and $s_n$ must be understood as $\mathsf{id}_{A_1}$.)
\end{theorem}
The following theorem states that in any $\mathbb{Q}_{\ge 0}$-linear symmetric monoidal category, the biassociativity and bicommutativity axioms may be omitted from the definition of a binomial bimonoid. Since bicommutativity is not assumed, we must add the right unitality and counitality axioms to our left unitality and counitality axioms.
\begin{theorem} \label{main-theorem-2}
Let $(\mathsf{C},\otimes,I)$ be a $\mathbb{Q}_{\ge 0}$-linear symmetric monoidal category. Let $(A_n) \in \mathsf{C}^\mathbb{N}$. Let $(\nabla_{n,p}\colon A_n \otimes A_p \rightarrow A_{n+p})_{n,p \ge 0}$, $(\Delta_{n,p}\colon A_{n+p} \rightarrow A_n \otimes A_p)_{n,p \ge 0}$, $\eta\colon I \rightarrow A_0$ and $\epsilon\colon A_0 \rightarrow I$ be maps in $\mathsf{C}$. The graded object $(A_n)$ together with these maps is a binomial bimonoid iff the eight axioms in \cref{fig:bialg-axiom-short} hold.
\end{theorem}
\subsection{Permutation averages} \label{SIX}
Permutation averages are $\mathbb{Q}_{\ge 0}$-convex combinations of permutation natural transformations in a $\mathbb{Q}_{\ge 0}$-linear symmetric monoidal category. In this subsection, we define permutation averages and state some of their properties which will be useful later on.

A finite sum of natural transformations of the form $A^{\otimes n} \rightarrow A^{\otimes n}$ in a $\mathbb{Q}_{\ge 0}$-linear symmetric monoidal category is automatically a natural transformation. Hence, naturality is automatically satisfied in the next definition and does not have to be checked.
\begin{definition} \label{def:blender}
Let $\mathsf{C}$ be a $\mathbb{Q}_{\ge 0}$-linear symmetric monoidal category. An \emph{$n$-permutation average} is a natural transformation $\frac{1}{|M|}\underset{\sigma \in M}{\sum}\sigma\colon A^{\otimes n} \rightarrow A^{\otimes n}$ for some $n \ge 0$ and nonempty multiset $M \in \mathcal{M}(S_{n})$.
\end{definition}
In the next two propositions, we work in a $\mathbb{Q}_{\ge 0}$-linear symmetric monoidal category $\mathsf{C}$.
\begin{proposition} \label{prop:blender}
Permutations are permutation averages, and permutation averages are closed under tensor product and composition. More precisely: 
\begin{itemize}
\item Any permutation $A^{\otimes n} \rightarrow A^{\otimes n}$ is an $n$-permutation average.
\item The tensor product of an $n$-permutation average with a $p$-permutation average is an $(n+p)$-permutation average.
\item The composite of two $n$-permutation averages is again an $n$-permutation average.
\end{itemize}
\end{proposition}
\begin{proof}
Singleton multisets yield permutations.

For tensor products: 
\begin{align*}
&~\Big(\frac{1}{|M_1|}\underset{\sigma \in M_1}{\sum}\sigma \Big) \otimes \Big(\frac{1}{|M_2|}\underset{\rho \in M_2}{\sum}\rho \Big) \\
=&~\frac{1}{|M_1||M_2|} \underset{\nu \in M_{1} \otimes M_{2}}{\sum}\nu \\
=&~\frac{1}{|M_1 \otimes M_2|} \underset{\nu \in M_{1} \otimes M_{2}}{\sum}\nu
\end{align*}
where \[M_1 \otimes M_2 := [\sigma \otimes \rho~\mid~ \sigma \in M_1,~\rho \in M_2].\]

For composition: 
\begin{align*}
&~\Big(\frac{1}{|M_1|}\underset{\sigma \in M_1}{\sum}\sigma \Big);\Big(\frac{1}{|M_2|}\underset{\rho \in M_2}{\sum}\rho \Big) \\
=&~\frac{1}{|M_1||M_2|} \underset{\nu \in M_{1};M_{2}}{\sum}\nu \\
=&~\frac{1}{|M_1;M_2|} \underset{\nu \in M_{1};M_{2}}{\sum}\nu
\end{align*}
where 
\begin{equation*}
M_1;M_2 := [\sigma;\rho~\mid~\sigma \in M_1,~\rho \in M_2].\qedhere
\end{equation*}
\end{proof}
\begin{proposition}\label{INVARIANT}
Let $((A_n),(s_n),(r_n))$ be a permutation splitting. For every $ \ge 0$, $s_n$ is invariant under postcomposition with any $n$-permutation average, and $r_n$ is invariant under precomposition with any $n$-permutation average.
\end{proposition}
\begin{proof}
We already know from \cref{PROPSYM} that $s_{n}$ is invariant under postcomposition by permutations, i.e.~if $\sigma \in S_{n}$, then \[s_{n};\sigma = s_{n}.\] We thus have 
\begin{equation*}
s_{n};\Big(\frac{1}{|M|}\underset{\sigma \in M}{\sum}\sigma \Big)
=\frac{1}{|M|}\underset{\sigma \in M}{\sum}s_{n};\sigma
=\frac{1}{|M|}\underset{\sigma \in M}{\sum}s_{n}
=\frac{1}{|M|}|M|s_n
=s_n.
\end{equation*}
The argument for $r_{n}$ is analogous.
\end{proof}
\section{Proving the main theorems} \label{SEC:FOUR}
In all this section, we will work in a given $\mathbb{Q}_{\ge 0}$-linear symmetric monoidal category $(\mathsf{C},\otimes,I)$.
\subsection{From the short definition of a binomial bimonoid to permutation splitting}
In this subsection, we consider a graded object $(A_n)$ in $\mathsf{C}$ equipped with maps $\eta\colon I \rightarrow A_0$, $\epsilon\colon A_0 \rightarrow I$, $(\nabla_{n,p}\colon A_n \otimes A_p \rightarrow A_{n+p})_{n,p \ge 0}$, $(\Delta_{n,p}\colon A_{n+p} \rightarrow A_n \otimes A_p)_{n,p \ge 0}$ satisfying the eight axioms in \cref{fig:bialg-axiom-short}, and define the maps $(r_n\colon A_1^{\otimes n} \rightarrow A_n)_{n \ge 0}$, $(s_n\colon A_n \rightarrow A_1^{\otimes n})_{n \ge 0}$ as in \cref{main-theorem}. That is, we define the maps $r_n$ and $s_n$ by \cref{def-fun:-1,def-fun:2}. We will show that we obtain a permutation splitting $((A_n),(r_n),(s_n))$.

By definition, we already have
\begin{equation*}
r_1=s_1=\mathsf{id}_{A_1}.
\end{equation*}
Moreover,
\begin{equation*}
r_0;s_0=\eta;\epsilon=\mathsf{id}_I
\end{equation*}
and
\begin{equation*}
s_0;r_0=\epsilon;\eta=\mathsf{id}_{A_0}.
\end{equation*}
It follows that all we have to prove are \cref{perm-split-equation-1,perm-split-equation-2} for every $n \ge 2$.
\begin{definition} \label{extension}
We extend the family of morphisms $\nabla_{n_{1},\ldots,n_{q}}\colon A_{n_{1}} \otimes \cdots \otimes A_{n_{q}} \rightarrow A_{n_{1}+\cdots+n_{q}}$ where $n_1,\dots,n_q \in \mathbb{N}$ from $q=2$ to every $q \ge 2$ by induction:
\begin{equation*} 
\nabla_{n_{1},\ldots,n_{q+1}} = \nabla_{n_{1},\ldots,n_{q}} \otimes \mathsf{id}_{A_{n_{q+1}}}; \nabla_{n_{1}+\cdots+n_{q},n_{q+1}}.
\end{equation*}
In string diagram notation:
\begin{equation} \label{higher-order-mult-ind}
\resizebox{!}{1.2cm}{\tikzfig{highnabla}} \quad:=\quad \resizebox{!}{1.2cm}{\tikzfig{Nind}}\quad.
\end{equation}
In the same way, we extend the family of morphisms $\Delta_{n_{1},\ldots,n_{q}}\colon A_{n_{1}+\cdots+n_{q}} \rightarrow A_{n_{1}} \otimes \ldots \otimes A_{n_{q}}$ where $n_1,\dots,n_q \in \mathbb{N}$ from $q=2$ to every $q \ge 2$ by induction: 
\begin{equation*}
\Delta_{n_{1},\ldots,n_{q+1}} = \Delta_{n_{1}+\cdots+n_{q},n_{q+1}};\Delta_{n_{1},\ldots,n_{q}} \otimes \mathsf{id}_{A_{n_{q+1}}}.
\end{equation*}
In string diagram notation:
\begin{equation} \label{higher-order-comult-ind}
\resizebox{!}{1.2cm}{\tikzfig{highdelta}} \quad:=\quad \resizebox{!}{1.2cm}{\tikzfig{Dind}}\quad.
\end{equation}
\end{definition}
\begin{proposition} \label{HIGH1}
This identity holds for all $q,r \ge 2$ and nonnegative integers $n_i, p_j$:
\begin{equation} \label{prop:higher-order-mul-comul}
\resizebox{!}{1.2cm}{\tikzfig{highnabladeltaA}}\quad=\quad\underset{\substack{\underset{\alpha}{\sum}i_{\alpha}^{\beta} ~=~ p_{\beta} \\ \underset{\beta}{\sum} i_{\alpha}^{\beta} ~=~ n_{\alpha}}}{\sum} \resizebox{!}{1.2cm}{\tikzfig{highnabladeltaB}}
\end{equation}
where the sum ranges over all matrices $(i_{\alpha}^{\beta}) \in \mathbb{N}^{q \times r}$ such that for each $\alpha \in [1,q]$, we have $\underset{1 \le \beta \le r}{\sum}i_{\alpha}^{\beta} = n_{\alpha}$ and for each $\beta  \in [1,r]$,  we have $\underset{1 \le \alpha \le q}{\sum}i_{\alpha}^{\beta} = p_{\beta}$.
\end{proposition}
\begin{proof}
The proof is by induction on $(q,r)$. The case $(2,2)$ is an axiom in \cref{fig:bialg-axiom-short}. Suppose that \cref{prop:higher-order-mul-comul} holds for all couples $(q,r)$ where $2 \le q \le q_0$ and $2 \le r \le r_0$, for some fixed integers $q_0,r_0 \ge 2$. We will prove the cases $(q_0+1,r)$ for $2 \le r \le _0$. We start by using \cref{higher-order-mult-ind}:
\begin{align}
\resizebox{!}{1.2cm}{\tikzfig{proof1A}}\quad&=\quad\resizebox{!}{1.2cm}{\tikzfig{proof1B}}\quad. \notag
\intertext{We then use the hypothesis of induction for the couple $(2,r)$ to write:}
&=\quad \underset{\substack{I_{1}+J_{1}=p_{1} \\ \cdots \\ I_l+J_l = p_r \\ I_{1} + \cdots + I_l\quad=\quad n_{1} +\cdots+ n_{q} \\ J_{1} +\ldots+ J_l= n_{q+1}}}{\sum} \resizebox{!}{1.2cm}{\tikzfig{proof1C}}\quad. \notag
\intertext{We use the hypothesis of induction for the couple $(q_0,r)$ to write:}
&\quad=\quad \underset{\substack{I_{1}+J_{1}=p_{1} \\ \cdots \\ I_l+J_l = p_l \\ I_{1} + \cdots + I_l = n_{1} +\cdots+ n_{q} \\ J_{1} +\ldots+ J_l = n_{q+1}}}{\sum} \underset{\substack{\underset{1 \le \alpha \le q}{\sum}i_{\alpha}^{\beta} ~=~ I_{\beta}~(\forall 1 \le \beta \le l) \\ \underset{1 \le \beta \le l}{\sum} i_{\alpha}^{\beta} ~=~ n_{\alpha}~(\forall 1 \le \alpha \le q)}}{\sum} \resizebox{!}{1.2cm}{\tikzfig{proof2}} \quad. \notag
\intertext{We can simplify using \cref{higher-order-comult-ind} and write:\footnotemark
}
&\quad=\quad \underset{\substack{I_{1}+J_{1}=p_{1} \\ \cdots \\ I_l+J_l = p_l \\ I_{1} + \cdots + I_l = n_{1} +\cdots+ n_{q} \\ J_{1} +\ldots+ J_l = n_{q+1}}}{\sum} \underset{\substack{\underset{1 \le \alpha \le q}{\sum}i_{\alpha}^{\beta} ~=~ I_{\beta}~(\forall 1 \le \beta \le l) \\ \underset{1 \le \beta \le l}{\sum} i_{\alpha}^{\beta} ~=~ n_{\alpha}~(\forall 1 \le \alpha \le q)}}{\sum} \resizebox{!}{1.2cm}{\tikzfig{proof3}}\quad. \label{first-ind-step-A}
\intertext{We now want to write:}
&\quad=\quad \underset{\substack{\underset{1 \le \alpha \le q+1}{\sum}u_{\alpha}^{\beta} ~=~ p_{\beta}~(\forall 1 \le \beta \le l) \\ \underset{1 \le \beta \le l}{\sum} u_{\alpha}^{\beta} ~=~ n_{\alpha}~(\forall 1 \le \alpha \le q+1)}}{\sum} \resizebox{!}{1.2cm}{\tikzfig{proof4}}\quad. \label{first-ind-step-B}
\end{align}
But \cref{first-ind-step-B} must be justified carefully. We will prove that the same terms appear in the sums in \cref{first-ind-step-A,first-ind-step-B} so that the latter equation holds.
\footnotetext{Note that this step doesn't use associativity of the multiplication which is not assumed in \cref{fig:bialg-axiom-short}. We simply use \cref{higher-order-mult-ind}.}

We first prove that every term in the sum in \cref{first-ind-step-A} is a term in the sum in \cref{first-ind-step-B}: Consider a term in the sum in \cref{first-ind-step-A} given by indexes $I_\beta$, $J_\beta$, $i_\alpha^\beta$. Set 
\begin{equation*}
u_\alpha^\beta=i_\alpha^\beta
\end{equation*}
for every $(\alpha,\beta) \in [1,q_0] \times [1,r]$ and 
\begin{equation*}
u_{q_0+1}^\beta=J_\beta
\end{equation*}
for every $\beta \in [1,r]$. We then obtain that
\begin{equation*}
\underset{1 \le \alpha \le q_0+1}{\sum}u_\alpha^\beta=\bigg(\underset{1 \le \alpha \le q_0}{\sum}i_\alpha^\beta\bigg)+J_\beta=I_\beta+J_\beta=p_\beta
\end{equation*}
for every $1 \le \beta \le r$, that
\begin{equation*}
\underset{1 \le \beta \le r}{\sum}u_\alpha^\beta=\underset{1 \le \beta \le r}{\sum}i_\alpha^\beta=n_\alpha
\end{equation*}
for every $1 \le \alpha \le q_0$, and that
\begin{equation*}
\underset{1 \le \beta \le r}{\sum}u_{q_0+1}^\beta=J_1+\dots+J_r=n_{q_0+1}.
\end{equation*}
Our term is thus also a term in the sum in \cref{first-ind-step-B}.

Conversely, consider a term in the sum in \cref{first-ind-step-B} given by indexes $(u_\alpha^\beta)$. We will show that this is a term in the sum in \cref{first-ind-step-A}. We set 
\begin{equation*}
i_\alpha^\beta=u_\alpha^\beta
\end{equation*}
for every $(\alpha,\beta) \in [1,q_0] \times [1,r]$,
\begin{equation*}
I_\beta=\underset{1 \le \alpha \le q_0}{\sum}u_\alpha^\beta
\end{equation*}
and
\begin{equation*}
J_\beta=u_{q_0+1}^\beta
\end{equation*}
for every $1 \le \beta \le r$. We obtain that
\begin{equation*}
I_\beta+J_\beta=\underset{1 \le \alpha \le q_0+1}{\sum}u_\alpha^\beta=p_\beta
\end{equation*}
for every $1 \le \beta \le r$, that 
\begin{equation*}
I_1+\dots+I_r=\underset{1 \le \alpha \le q_0}{\sum}u_\alpha^1+\dots+\underset{1 \le \alpha \le q_0}{\sum}u_\alpha^l=\underset{1 \le \beta \le r}{\sum}u_1^\beta+\dots+\underset{1 \le \beta \le r}{\sum}u_{q_0}^\beta=n_1+\dots+n_{q_0},
\end{equation*}
that
\begin{equation*}
J_1+\dots+J_l=\underset{1 \le \beta \le r}{\sum}u_{q_0+1}^\beta=n_{q_0+1},
\end{equation*}
that
\begin{equation*}
\underset{1 \le \alpha \le q_0}{\sum}i_\alpha^\beta=\underset{1 \le \alpha \le q_0}{\sum}u_\alpha^\beta=I_\beta
\end{equation*}
for every $1 \le \beta \le r$ and that
\begin{equation*}
\underset{1 \le \beta \le r}{\sum}i_\alpha^\beta=\underset{1 \le \beta \le r}{\sum}u_\alpha^\beta=n_\alpha
\end{equation*}
for every $1 \le \alpha \le q_0$. Our term is thus also a term in the sum in \cref{first-ind-step-A}.

We have proved that the sums in \cref{first-ind-step-A} and in \cref{first-ind-step-B} contain exactly the same terms. These two sums are thus equal. It concludes the proof of the cases $(q_0+1,r)$ for $2 \le r \le r_0$. \cref{prop:higher-order-mul-comul} thus holds for all $(q,r)$ with $2 \le q \le q_0+1$ and $2 \le r \le r_0$. Applying an up-down symmetry to the above induction step, we can prove the cases $(q,r_0+1)$ for $2 \le q \le q_0+1$. We conclude that \cref{prop:higher-order-mul-comul} holds for all $(q,r)$ with $2 \le q \le q_0+1$ and $2 \le r \le r_0+1$.

By the principle of induction, \cref{prop:higher-order-mul-comul} holds for all couples $(q,r)$ with $q,r \ge 2$.
\end{proof}
\begin{corollary} \label{HIGH2}
This identity holds for every $q \ge 2$:
\begin{equation*}
\resizebox{!}{1.2cm}{\tikzfig{sumpermA-}}\quad=\quad \underset{\sigma \in S_{q}}{\sum} \resizebox{!}{1.2cm}{\tikzfig{sumpermB}}\quad.
\end{equation*}
\end{corollary}
\begin{proof}
We use \cref{HIGH1} with $q=r \ge 2$ and $n_{i}=p_{i}=1$ for every $1 \le i \le q$. We obtain:
\begin{equation*}
\resizebox{!}{1.2cm}{\tikzfig{sumpermA-}} \quad=\quad \underset{\substack{\underset{\alpha}{\sum}i_{\alpha}^{\beta} ~=~ 1 \\ \underset{\beta}{\sum} i_{\alpha}^{\beta} ~=~ 1}}{\sum} \resizebox{!}{1.2cm}{\tikzfig{highnabladeltaB1}}\quad.
\end{equation*}
The sum on the RHS is indexed by the set of all matrices $(i_{\alpha}^{\beta}) \in \mathbb{N}^{n \times n}$ which contain only $0$ and $1$ entries and such that there is a single $1$ in each column and in each row. These matrices are called permutation matrices. In string diagrams, it amounts to the fact that in every summand on the RHS, every input is connected to exactly one output via a string labelled $1$,  with no two inputs mapped to the same output.

We can replace the strings labelled $0$ on the RHS with $\epsilon;\eta$ using \cref{connected-intro} which is one of the axioms in \cref{fig:bialg-axiom-short}, and then use unitality and counitaly. We end up with every summand on the RHS being one of the permutations in $S_q$. Moreover, each of these permutations appears exactly once in the sum.
\end{proof}
\begin{proposition} \label{HIGH3}
This identity ($n$ times $1$ on the LHS) holds for every $n \ge 2$:
\begin{equation*}
\resizebox{!}{1.2cm}{\tikzfig{highdeltanablaA-}}\quad=\quad\scalebox{0.8}{$n!$} \resizebox{!}{1.2cm}{\tikzfig{highdeltanablaB}}\quad.
\end{equation*}
\end{proposition}
\begin{proof}
By induction on $n$. For $n=2$, this is one of the axioms in \cref{fig:bialg-axiom-short}. Suppose that the identity holds for some $n \ge 2$. Then 
\begin{align*}
&~\Delta_{1,\ldots,1};\nabla_{1,\ldots,1}~(n+1\text{ times }1) \\
=&~ \Delta_{n,1};\Delta_{1,\ldots,1} \otimes \mathsf{id}_{A_1};\nabla_{1,\ldots,1} \otimes \mathsf{id}_{A_1}; \nabla_{n,1}~(n\text{ times }1) \\
=&~ n!\Delta_{n,1};\nabla_{n,1} \\
=&~n! \binom{n+1}{1}\cdot \mathsf{id}_{A_n} \\
=&~ n!(n+1)\cdot \mathsf{id}_{A_n} \\
=&~ (n+1)!\cdot \mathsf{id}_{A_n}.\qedhere
\end{align*}
\end{proof}
We conclude from \cref{HIGH2,HIGH3} that $((A_n),(r_n),(s_n))$ is a permutation splitting. We sum up in the next proposition what has been proved in this subsection.
\begin{proposition} \label{prop-short-to-perm}
Let $(\mathsf{C},\otimes,I)$ be a $\mathbb{Q}_{\ge 0}$-linear symmetric monoidal category. Let $(A_n) \in \mathsf{C}^\mathbb{N}$ and $(\nabla_{n,p}\colon A_n \otimes A_p \rightarrow A_{n+p})_{n,p \ge 0}$, $(\Delta_{n,p}\colon A_{n+p} \rightarrow A_n \otimes A_p)_{n,p \ge 0}$, $\eta\colon I \rightarrow A_0$, $\epsilon\colon A_0 \rightarrow I$ be maps such that the eight axioms in \cref{fig:bialg-axiom-short} hold. If we define the maps $(r_n)_{n \ge 0}$ and $(s_n)_{n \ge 0}$ as in \cref{main-theorem}, then we obtain a permutation splitting.
\end{proposition}
\subsection{From permutation splitting to the full definition of a binomial bimonoid} \label{SEVEN}
In this subsection, we verify that from any permutation splitting, we obtain a binomial bimonoid defined as in \cref{main-theorem}.
We write $\nabla_{n,p}^*$ and $\Delta_{n,p}^*$ and add stars in the same way in the string diagrams to denote the putative multiplications and comultiplications which are defined starting from a permutation splitting $(A_n,s_{n},r_{n})_{n \ge 0}$, i.e.~
\begin{equation*} 
\nabla_{n,p}^* :=s_n \otimes s_p;r_{n+p}
\end{equation*}
and
\begin{equation*}
\Delta_{n,p}^* := \binom{n+p}{n}s_{n+p};r_n \otimes r_p
\end{equation*}
for all $n,p \ge 0$. We also write $\eta^*$ and $\epsilon^*$ for the putative unit and counit, which are defined as
\begin{equation*}
\eta^*:=r_0 \text{ and }\epsilon^*:=s_0.
\end{equation*}
The proof of the following proposition is the most difficult part in this subsection.
\begin{proposition}\label{PROP1}
We have for all $n,p,q,r \in \mathbb{N}$ such that $n+p=q+r$:
\begin{equation} \label{AAAB}
\resizebox{!}{1.2cm}{\tikzfig{nabladeltaA-}}\quad=\quad 
\scalebox{1}{$\underset{\substack{a,b,c,d \ge 0 \\ a+b=n \\ c+d=p \\ a+c=q \\ b+d=r}}{\sum}$} 
\resizebox{!}{1.2cm}{\tikzfig{nabladeltaB-}}\quad.
\end{equation}
\end{proposition}
\begin{proof}
Let $n,p,q,r \in \mathbb{N}$ such that $n+p=q+r$.

\textbf{First step: we reduce \cref{AAAB} to \cref{eq-to-prove-in-perm-to-bialg}.} We compute\footnote{In this first step, we use the notation $e_k:=r_k;s_k=\frac{1}{k!}\underset{\sigma \in S_k}{\sum}\sigma\colon A_1^{\otimes k} \rightarrow A_1^{\otimes k}$.}
\begin{align*}
\nabla_{n,p}^*;\Delta_{q,r}^*=&~\binom{n+p}{q}s_n \otimes s_p;r_{n+p};s_{n+p};r_q \otimes r_r \\
=&~\binom{n+p}{q}s_n \otimes s_p;e_{n+p};r_q \otimes r_r.
\end{align*}
It follows that
\begin{align} \label{proof-for-nabla-delta-1}
r_n \otimes r_p;\nabla_{n,p}^*;\Delta_{q,r}^*;s_q \otimes s_r=&~\binom{n+p}{q}e_n \otimes e_p;e_{n+p};e_q \otimes e_r \notag \\
=&~\binom{n+p}{q}e_{n+p} \notag \\
=&~\frac{1}{q!r!}\underset{\sigma \in S_{n+p}}{\sum}\sigma.
\end{align}
Let $(a,b,c,d) \in \mathbb{N}^4$ such that $a+b=n$, $c+d=p$, $a+c=q$ and $b+d=r$. We have
\begin{align*}
&~\Delta_{a,b}^* \otimes \Delta_{c,d}^*;\mathsf{id}_{A_a} \otimes \gamma_{A_b,A_c} \otimes \mathsf{id}_{A_d};\nabla_{a,c}^* \otimes \nabla_{b,d}^* \\
=&~\binom{n}{a}\binom{p}{c}s_n \otimes s_p;r_a \otimes r_b \otimes r_c \otimes r_d;\mathsf{id}_{A_a} \otimes \gamma_{A_b,A_c} \otimes \mathsf{id}_{A_d};s_a \otimes s_c \otimes s_b \otimes s_d;r_q \otimes r_r \\
=&~\binom{n}{a}\binom{p}{c}s_n \otimes s_p;\mathsf{id}_{A_1^{\otimes a}} \otimes \gamma_{A_1^{\otimes b},A_1^{\otimes c}} \otimes \mathsf{id}_{A_1^{\otimes d}};e_a \otimes e_c \otimes e_b \otimes e_d;r_q \otimes r_r.
\end{align*}
We warn the reader that in the above computations, we used the symbol $r$ to denote both a nonnegative integer and the morphisms $r_n\colon A^{\otimes n} \rightarrow A_n$. It should not cause confusion since when $r$ denotes a morphism, this morphism is always written with a subscript as in $r_n$.

From the above computations, it follows that
\begin{align} \label{proof-for-nabla-delta-2}
&~r_n \otimes r_p;\Delta_{a,b}^* \otimes \Delta_{c,d}^*;\mathsf{id}_{A_a} \otimes \gamma_{A_b,A_c} \otimes \mathsf{id}_{A_d};\nabla_{a,c}^* \otimes \nabla_{b,d}^*;s_q \otimes s_r \notag \\
=&~\binom{n}{a}\binom{p}{c}e_n \otimes e_p;\mathsf{id}_{A_1^{\otimes a}} \otimes \gamma_{A_1^{\otimes b},A_1^{\otimes c}} \otimes \mathsf{id}_{A_1^{\otimes d}};e_a \otimes e_c \otimes e_b \otimes e_d;e_q \otimes e_r \notag \\
=&~\binom{n}{a}\binom{p}{c}e_n \otimes e_p;\mathsf{id}_{A_1^{\otimes a}} \otimes \gamma_{A_1^{\otimes b},A_1^{\otimes c}} \otimes \mathsf{id}_{A_1^{\otimes d}};e_q \otimes e_r \notag \\
=&~\binom{n}{a}\binom{p}{c}\frac{1}{n!p!q!r!}\underset{(\alpha,\beta,\gamma,\delta) \in S_n \times S_p \times S_q \times S_r}{\sum}\alpha \otimes \beta;\mathsf{id}_{A_1^{\otimes a}} \otimes \gamma_{A_1^{\otimes b},A_1^{\otimes c}} \otimes \mathsf{id}_{A_1^{\otimes d}};\gamma \otimes \delta \notag \\
=&~\frac{1}{a!b!c!d!q!r!}\underset{(\alpha,\beta,\gamma,\delta) \in S_n \times S_p \times S_q \times S_r}{\sum}\alpha \otimes \beta;\mathsf{id}_{A_1^{\otimes a}} \otimes \gamma_{A_1^{\otimes b},A_1^{\otimes c}} \otimes \mathsf{id}_{A_1^{\otimes d}};\gamma \otimes \delta.
\end{align}
Since $r_n \otimes r_p$ is a split epimorphisms and $s_q \otimes s_r$ is a split monomorphism, we deduce from \cref{proof-for-nabla-delta-1,proof-for-nabla-delta-2} that to prove \cref{AAAB}, it suffices to prove that
\begin{equation} \label{eq-to-prove-in-perm-to-bialg}
\underset{\substack{a,b,c,d \ge 0 \\ a+b=n \\ c+d=p \\ a+c=q \\ b+d=r}}{\sum}\frac{1}{a!b!c!d!}\underset{(\alpha,\beta,\gamma,\delta) \in S_n \times S_p \times S_q \times S_r}{\sum}\alpha \otimes \beta;\mathsf{id}_{A_1^{\otimes a}} \otimes \gamma_{A_1^{\otimes b},A_1^{\otimes c}} \otimes \mathsf{id}_{A_1^{\otimes d}};\gamma \otimes \delta=\underset{\sigma \in S_{n+p}}{\sum}\sigma.
\end{equation}

\textbf{Second step: reducing \cref{eq-to-prove-in-perm-to-bialg} to \cref{eq-to-prove-in-perm-to-bialg-rewritten}.}
For every tuple $(a,b,c,d) \in \mathbb{N}^4$ such that $a+b=n$, $c+d=p$, $a+c=q$ and $b+d=r$, we define the function
\begin{equation*}
\phi_{a,b,c,d}\colon S_n \times S_p \times S_q \times S_r \rightarrow S_{n+p}(A_1)
\end{equation*}
by the formula
\begin{equation} \label{formula-for-phi}
\phi_{a,b,c,d}(\alpha,\beta,\gamma,\delta)=\alpha \otimes \beta;\mathsf{id}_{A_1^{\otimes a}} \otimes \gamma_{A_1^{\otimes b},A_1^{\otimes c}} \otimes \mathsf{id}_{A_1^{\otimes d}};\gamma \otimes \delta.
\end{equation}
We warn the reader that \cref{formula-for-phi} uses the symbol $\gamma$ to denote both a permutation in $S_q$ and the exchange. It should not cause confusion since in formulas the exchange is always written with subscripts as in $\gamma_{A,B}$.

\cref{eq-to-prove-in-perm-to-bialg} can then be written
\begin{equation} \label{eq-given-by-F}
\underset{\substack{a,b,c,d \ge 0 \\ a+b=n \\ c+d=p \\ a+c=q \\ b+d=r}}{\sum}\frac{1}{a!b!c!d!}\underset{(\alpha,\beta,\gamma,\delta) \in S_n \times S_p \times S_q \times S_r}{\sum}\phi_{a,b,c,d}(\alpha,\beta,\gamma,\delta)=\underset{\sigma \in S_{n+p}}{\sum}\sigma.
\end{equation}
We can rewrite the above equation as
\begin{equation} \label{the-equation-to-be-proved}
\underset{\sigma \in S_{n+p}(A_1)}{\sum}\Big(
\underset{\substack{a,b,c,d \ge 0 \\ a+b=n \\ c+d=p \\ a+c=q \\ b+d=r}}{\sum}\frac{|\phi_{a,b,c,d}^{-1}(\sigma)|}{a!b!c!d!}\Big)\sigma=\underset{\sigma \in S_{n+p}(A_1)}{\sum}\sigma
\end{equation}
where $S_{n+p}(A_1)$ denotes the set of all the morphisms of the form
\[
\sigma:A_1^{\otimes (n+p)} \rightarrow A_1^{\otimes (n+p)}
\]
for any $\sigma \in S_{n+p}$. A caveat is that we cannot identify $S_{n+p}(A_1)$ with $S_{n+p}$ so that it would not be easy to directly prove \cref{the-equation-to-be-proved}. We have a function
\begin{equation*}
S_{n+p} \rightarrow S_{n+p}(A_1)
\end{equation*}
but no guarantee that this is a bijection.

That's why we will rather work in the initial pointed $\mathbb{Q}_{\ge 0}$-linear symmetric strict monoidal category $((\mathcal{I},\otimes,0),1)$ defined in \cref{app:initial} in which we can reason more easily.

We will thus consider the function
\begin{equation*}
\phi_{a,b,c,d}'\colon S_n \times S_p \times S_q \times S_r \rightarrow S_{n+p}(1) \simeq S_{n+p},
\end{equation*}
where $S_{n+p}(1) \simeq S_{n+p}$ is a bijection,\footnote{In the same way as above, $S_{n+p}(1)$ denotes the set of all the morphisms of the form
\[
\sigma\colon 1^{\otimes (n+p)}=n+p \rightarrow 1^{\otimes (n+p)}=n+p
\]
in $(\mathcal{I},\otimes,0)$ for any $\sigma \in S_{n+p}$.} defined by the formula
\begin{equation} \label{formula-for-phi'}
\phi_{a,b,c,d}'(\alpha,\beta,\gamma,\delta)=\alpha \otimes \beta;\mathsf{id}_{a} \otimes \gamma_{b,c} \otimes \mathsf{id}_{d};\gamma \otimes \delta
\end{equation}
and prove that
\begin{equation} \label{eq-to-prove-in-perm-to-bialg-rewritten}
\underset{\sigma \in S_{n+p}(1)}{\sum}\Big(
\underset{\substack{a,b,c,d \ge 0 \\ a+b=n \\ c+d=p \\ a+c=q \\ b+d=r}}{\sum}\frac{|(\phi_{a,b,c,d}')^{-1}(\sigma)|}{a!b!c!d!}\Big)\sigma=\underset{\sigma \in S_{n+p}(1)}{\sum}\sigma.
\end{equation}
It will imply that
\begin{equation*}
\underset{\substack{a,b,c,d \ge 0 \\ a+b=n \\ c+d=p \\ a+c=q \\ b+d=r}}{\sum}\frac{1}{a!b!c!d!}\underset{(\alpha,\beta,\gamma,\delta) \in S_n \times S_p \times S_q \times S_r}{\sum}\phi_{a,b,c,d}'(\alpha,\beta,\gamma,\delta)=\underset{\sigma \in S_{n+p}(1)}{\sum}\sigma.
\end{equation*}
Applying to the above equation the unique $\mathbb{Q}_{\ge 0}$-linear symmetric strong monoidal functor
\begin{equation*}
F\colon (\mathcal{I},\otimes,0) \rightarrow (\mathsf{C},\otimes,I)
\end{equation*}
such that $F(1)=A_1$, from \cref{cons-mac}, we will obtain that \cref{eq-given-by-F} holds. Note that
\begin{equation*}
F(\phi_{a,b,c,d}'(\alpha,\beta,\gamma,\delta))=\phi_{a,b,c,d}(\alpha,\beta,\gamma,\delta).
\end{equation*}

\textbf{Third step: given $\sigma \in S_{n+p}$, we determine the unique tuple $(a,b,c,d) \in \mathbb{N}^4$ such that $(\phi_{a,b,c,d}')^{-1}(\sigma)$ can be nonempty.} Starting from this step, we will identify $S_{n+p}(1)$ with $S_{n+p}$. Let $\sigma \in S_{n+p}$ and $(a,b,c,d) \in S_n \times S_p \times S_q \times S_r$ such that $a+b=n$, $c+d=p$, $a+c=q$ and $b+d=r$.
Suppose that $(\alpha,\beta,\gamma,\delta) \in S_n \times S_p \times S_q \times S_r$ is such that
\begin{equation*}
\sigma=\phi'_{a,b,c,d}(\alpha,\beta,\gamma,\delta).
\end{equation*}
For the computations of $\sigma(i)$ below, it could be helpful to look at \cref{fig:proof-sum-perm}.

Let $i \in [1,n]$. If $\alpha(i) \in [1,a]$, then 
\begin{align*}
\sigma(i)=&~(\gamma \otimes \delta \circ \mathsf{id}_a \otimes \gamma_{b,c} \otimes \mathsf{id}_d)((\alpha \otimes \beta)(i)) \\
=&~(\gamma \otimes \delta \circ \mathsf{id}_a \otimes \gamma_{b,c} \otimes \mathsf{id}_d)(\alpha(i)) \\
=&~(\gamma \otimes \delta)(\alpha(i)) \\
=&~\gamma(\alpha(i)) \in [1,q].
\end{align*}
If $\alpha(i) \in [a+1,n]$, then
\begin{align*}
\sigma(i)=&~(\gamma \otimes \delta \circ \mathsf{id}_{a} \otimes \gamma_{b,c} \otimes \mathsf{id}_{d})((\alpha \otimes \beta)(i)) \\
=&~(\gamma \otimes \delta \circ \mathsf{id}_a \otimes \gamma_{b,c} \otimes \mathsf{id}_d)(\alpha(i)) \\
=&~(\gamma \otimes \delta)(c+\alpha(i)) \\
=&~\delta(\alpha(i)-a)+q \in [q+1,n+p].
\end{align*}
We conclude that for every $i \in [1,n]$, we have $\sigma(i) \in [1,q]$ if{f} $\alpha(i) \in [1,a]$. That is, we have
\begin{equation*}
\alpha^{-1}([1,a])=\sigma^{-1}([1,q]) \cap [1,n].
\end{equation*}
But since $\alpha$ is a bijection, we obtain $a=|[1,a]|=|\alpha^{-1}([1,a])|=|\sigma^{-1}([1,q]) \cap [1,n]|$. Thus
\begin{equation} \label{eq-for-a}
a=|\sigma^{-1}([1,q]) \cap [1,n]|.
\end{equation}
Recall that 
\begin{equation} \label{eq-b-c-d-from-a-in-perm-to-bialg}
b=n-a,~c=q-a \text{ and }d=p-q+a.
\end{equation}
It follows that $(a,b,c,d)$ is the unique tuple in $\mathbb{N}^4$ determined by \cref{eq-for-a,eq-b-c-d-from-a-in-perm-to-bialg}.

Given $\sigma \in S_{n+p}$, we will write $(a_\sigma,b_\sigma,c_\sigma,d_\sigma)$ for the tuple in $\mathbb{N}^4$ given by
\begin{equation*}
a_\sigma=|\sigma^{-1}([1,q]) \cap [1,n]|,
\end{equation*}
\begin{equation*}
b_\sigma=n-a_\sigma,~c_\sigma=q-a_\sigma \text{ and }d_\sigma=p-q+a_\sigma.
\end{equation*}
We have proved that for all $\sigma \in S_{n+p}$ and $(a,b,c,d) \in \mathbb{N}^4$ such that $a+b=n$, $c+d=p$, $a+c=q$ and $b+d=r$, we have 
\begin{equation} \label{the-implication}
\big(\phi'_{a,b,c,d}(\sigma) \neq \emptyset\big) \Rightarrow \big((a,b,c,d)=(a_\sigma,b_\sigma,c_\sigma,d_\sigma)\big).
\end{equation}

\textbf{Fourth step: simplification of \cref{eq-to-prove-in-perm-to-bialg-rewritten} and equivalent condition.}
\cref{the-implication} implies that we can rewrite \cref{eq-to-prove-in-perm-to-bialg-rewritten} as
\begin{equation*}
\underset{\sigma \in S_{n+p}}{\sum}\frac{|(\phi'_{a_\sigma,b_\sigma,c_\sigma,d_\sigma})^{-1}(\sigma)|}{a_\sigma!b_\sigma!c_\sigma!d_\sigma!}\sigma=\underset{\sigma \in S_{n+p}}{\sum}\sigma
\end{equation*}
which is equivalent to
\begin{equation*}
\frac{|(\phi'_{a_\sigma,b_\sigma,c_\sigma,d_\sigma})^{-1}(\sigma)|}{a_{\sigma}!b_{\sigma}!c_{\sigma}!d_{\sigma}!}=1
\end{equation*}
for every $\sigma \in S_{n+p}$, which is equivalent to
\begin{equation*}
|(\phi'_{a_\sigma,b_\sigma,c_\sigma,d_\sigma})^{-1}(\sigma)|=a_\sigma!b_\sigma!c_\sigma!d_\sigma!
\end{equation*}
for every $\sigma \in S_{n+p}$, which is equivalent to the existence of a bijection
\begin{equation} \label{the-bijec}
(\phi'_{a_\sigma,b_\sigma,c_\sigma,d_\sigma})^{-1}(\sigma) \simeq S_{a_\sigma} \times S_{b_\sigma} \times S_{c_\sigma} \times S_{d_\sigma}.
\end{equation}
\begin{figure}
\begin{framed}
\begin{equation*}
\resizebox{!}{4cm}{\tikzfig{big-drawing}}
\end{equation*}
\caption{String diagram representation of $\psi_\sigma(u,v,w,x)=(\alpha,\beta,\gamma,\delta) \in (\phi'_{a_\sigma,b_\sigma,c_\sigma,d_\sigma})^{-1}(\sigma)$.~\\~\\The permutations $\alpha$ and $\beta$ are obtained from $u,v$ and $w,x$ as depicted. Then, there exist unique $\gamma$ and $\delta$ such that $\phi'_{a_\sigma,b_\sigma,c_\sigma,d_\sigma}(\alpha,\beta,\gamma,\delta)$, which is the permutation represented by the whole diagram, is equal to $\sigma$.} 
\label{fig:proof-sum-perm}
\end{framed}
\end{figure}

\textbf{Fifth step: for every $\sigma \in S_{n+p}$, we define a map $\psi_\sigma\colon S_{a_\sigma} \times S_{b_\sigma} \times S_{c_\sigma} \times S_{d_\sigma} \rightarrow S_n \times S_p \times S_q \times S_r$.} We will later prove that $\psi_\sigma$ is injective (in the sixth step) and that $\mathrm{im}\,\psi_\sigma=(\phi'_{a_\sigma,b_\sigma,c_\sigma,d_\sigma})^{-1}(\sigma)$. These two facts will imply the desired bijection \labelcref{the-bijec}.

Let $\sigma \in S_{n+p}$. We will explain how the function $\psi_\sigma$ is defined. It will be helpful to look at \cref{fig:proof-sum-perm}. We write
\begin{equation*}
\sigma^{-1}([1,q]) \cap [1,n]=\{i_1^1<i_2^1<\dots<i_{a_\sigma}^1\},
\end{equation*}
\begin{equation*}
\sigma^{-1}([q+1,n+p]) \cap [1,n]=\{i_1^2<i_2^2<\dots<i_{b_\sigma}^2\},
\end{equation*}
\begin{equation*}
\sigma^{-1}([1,q]) \cap [n+1,n+p]=\{i_1^3<i_2^3<\dots<i_{c_\sigma}^3\},
\end{equation*}
\begin{equation*}
\sigma^{-1}([q+1,n+p]) \cap [n+1,n+p]=\{i_1^4<i_2^4<\dots<i_{d_\sigma}^4\}.
\end{equation*}
Let $(u,v,w,x) \in S_{a_\sigma} \times S_{b_\sigma} \times S_{c_\sigma} \times S_{d_\sigma}$. We define 
\begin{equation*}
\psi_\sigma(u,v,w,x)=(\alpha,\beta,\gamma,\delta)
\end{equation*}
where
$\alpha \in S_n$, $\beta \in S_p$, $\gamma \in S_q$ and $\delta \in S_r$ are given by the following formulas:\footnote{We must check that $\alpha,\beta,\gamma,\delta$ are permutations in the appropriate symmetric groups. It suffices to first verify that the formulas for $\alpha,\beta,\gamma,\delta$ define endofunctions respectively on $[1,n]$, $[1,p]$, $[1,q]$ and $[1,r]$, and then to verify that the following formulas provide the inverses of $\alpha,\beta,\gamma,\delta$:
\begin{equation*}
\left\{
\begin{aligned}
\alpha^{-1}(l)&=i^1_{u^{-1}(l)}~&\text{for every }l \in [1,a_\sigma], \\
\alpha^{-1}(l)&=i^2_{v^{-1}(l-a_\sigma)}~&\text{for every }l \in [a_\sigma+1,n],
\end{aligned}
\right.
\qquad\qquad
\left\{
\begin{aligned}
\beta^{-1}(l)&=i^3_{w^{-1}(l)}-n ~&\text{for every }l \in [1,c_\sigma], \\
\beta^{-1}(l)&=i^4_{x^{-1}(l-c_\sigma)}-n~&\text{for every }l \in [c_\sigma+1,p],
\end{aligned}
\right.
\end{equation*}
\begin{equation*}
\left\{
\begin{aligned}
\gamma^{-1}(l)&=u(m)\text{ where }m\text{ is given by }\sigma^{-1}(l)=i^1_m~&\text{for every }l \in [1,q] \cap \sigma([1,n]), \\
\gamma^{-1}(l)&=w(m)+a_\sigma\text{ where }m\text{ is given by }\sigma^{-1}(l)=i^3_m~&\text{for every }l \in [1,q] \cap \sigma([n+1,n+p]),
\end{aligned}
\right.
\end{equation*}
\begin{equation*}
\left\{
\begin{aligned}
\delta^{-1}(l)&=v(m)\text{ where }m\text{ is given by }\sigma^{-1}(l+q)=i^2_m ~&\text{for every }l \in [1,r] \text{ such that }l+q \in \sigma([1,n]), \\
\delta^{-1}(l)&=x(m)+b_\sigma\text{ where }m\text{ is given by }\sigma^{-1}(l+q)=i^4_m~&\text{for every }l \in [1,r] \text{ such that } l+q \in \sigma([n+1,n+p]).
\end{aligned}
\right.
\end{equation*}
}
\begin{equation*}
\left\{ 
\begin{aligned}
\alpha(i^1_k)&=u(k)~&\text{for every }k \in [1,a_\sigma], \\
\alpha(i^2_k)&=v(k)+a_\sigma~&\text{for every }k \in [1,b_\sigma],
\end{aligned}
\right.
\qquad\qquad
\left\{ 
\begin{aligned}
\beta(i^3_k-n)&=w(k)~&\text{for every }k \in [1,c_\sigma], \\
\beta(i^4_k-n)&=x(k)+c_\sigma~&\text{for every }k \in [1,d_\sigma],
\end{aligned}
\right.
\end{equation*}
\begin{equation*}
\left\{ 
\begin{aligned}
\gamma(k)&=\sigma(i^1_{u^{-1}(k)})~&\text{for every }k \in [1,a_\sigma], \\
\gamma(k)&=\sigma(i^3_{w^{-1}(k-a_\sigma)})~&\text{for every }k \in [a_\sigma+1,q],
\end{aligned}
\right.
\end{equation*}
\begin{equation*}
\left\{ 
\begin{aligned}
\delta(k)&=\sigma(i^2_{v^{-1}(k)})-q~&\text{for every }k \in [1,b_\sigma], \\
\delta(k)&=\sigma(i^4_{x^{-1}(k-b_\sigma)})-q~&\text{for every }k \in [b_\sigma+1,r].
\end{aligned}
\right.
\end{equation*}

\textbf{Sixth step: we prove that $\psi_\sigma$ is injective.}
Let $\sigma \in S_{n+p}$ and let $i_k^j$ for $j \in [1,4]$ be defined as in the third step. Let $(u,v,w,x)$ and $(u',v',w',x')$ in $S_{a_\sigma} \times S_{b_\sigma} \times S_{c_\sigma} \times S_{d_\sigma}$. We write
\begin{equation*}
\psi_\sigma(u,v,w,x)=(\alpha,\beta,\gamma,\delta)
\end{equation*}
and
\begin{equation*}
\psi_\sigma(u',v',w',x')=(\alpha',\beta',\gamma',\delta').
\end{equation*}
Suppose that
\begin{equation*}
(\alpha,\beta,\gamma,\delta)=(\alpha',\beta',\gamma',\delta').
\end{equation*}
We obtain from the fifth step that
\begin{equation*}
\left\{ 
\begin{aligned}
u(k)&=\alpha(i^1_k)=\alpha'(i^1_k)=u'(k)~&\text{for every }k \in [1,a_\sigma], \\
v(k)+a_\sigma&=\alpha(i^2_k)=\alpha'(i^2_k)=v'(k)+a_\sigma~&\text{for every }k \in [1,b_\sigma],
\end{aligned}
\right.
\end{equation*}
\begin{equation*}
\left\{ 
\begin{aligned}
w(k)&=\beta(i^3_k-n)=\beta'(i^3_k-n)=w'(k)~&\text{for every }k \in [1,c_\sigma], \\
x(k)+c_\sigma&=\beta(i^4_k-n)=\beta'(i^4_k-n)=x'(k)+c_\sigma~&\text{for every }k \in [1,d_\sigma].
\end{aligned}
\right.
\end{equation*}
It follows that
\begin{equation*}
(u,v,w,x)=(u',v',w',x').
\end{equation*}

\textbf{Seventh step: some formulas for $\phi'$.}
Let $(a,b,c,d) \in \mathbb{N}^4$ such that $a+b=n$, $c+d=p$, $a+c=q$ and $b+d=r$. Let also $(\alpha,\beta,\gamma,\delta) \in S_n \times S_p \times S_q \times S_r$. Define the nonnegative integers $I^j_k$ as follows:
\begin{equation*}
\alpha^{-1}([1,a])=\{I^1_1<I^1_2<\dots<I^1_a\},
\end{equation*}
\begin{equation*}
\alpha^{-1}([a+1,n])=\{I_1^2<I_2^1<\dots<I_b^2\},
\end{equation*}
\begin{equation*}
\beta^{-1}([1,c])=\{I_1^3-n<I_2^3-n<\dots<I_c^3-n\},
\end{equation*}
\begin{equation*}
\beta^{-1}([c+1,p])=\{I_1^4-n<I_2^4-n<\dots<I_d^4-n\}.
\end{equation*}
By using \cref{formula-for-phi'}, we obtain that for every $j \in [1,4]$, we have
\begin{equation} \label{the-formula-for-phi'}
\phi'_{a,b,c,d}(\alpha,\beta,\gamma,\delta)(I_k^j)=
\left\{
\begin{aligned}
&\gamma(\alpha(I_k^j))~&\text{for every }k \in [1,a] \text{ if }j=1, \\
&\delta(\alpha(I_k^j)-a)+q~&\text{for every }k \in [1,b] \text{ if }j=2, \\
&\gamma(\beta(I_k^j-n)+a)~&\text{for every }k \in [1,c] \text{ if }j=3, \\
&\delta(\beta(I_k^j-n)+b-c)+q~&\text{for every }k \in [1,d] \text{ if }j=4.
\end{aligned}
\right.
\end{equation}

\textbf{Eighth step: we prove that $\mathrm{im}\,\psi_\sigma=(\phi'_{a_\sigma,b_\sigma,c_\sigma,d_\sigma})^{-1}(\sigma)$.} Let $\sigma \in S_{n+p}$.  Let $(i_k^j)$ for $j \in [1,4]$ be defined as in the fifth step.

\textbf{(i)} We first check that
$\mathrm{im}\,\psi_\sigma \subseteq (\phi'_{a_\sigma,b_\sigma,c_\sigma,d_\sigma})^{-1}(\sigma)$. Let $(u,v,w,x) \in S_{a_\sigma} \times S_{b_\sigma} \times S_{c_\sigma} \times S_{d_\sigma}$ and let 
\begin{align} \label{alpha-in-8}
(\alpha,\beta,\gamma,\delta):=\psi_\sigma(u,v,w,x) \in S_n \times S_p \times S_q \times S_r.
\end{align}
We must prove that
\begin{equation*}
\phi'_{a_\sigma,b_\sigma,c_\sigma,d_\sigma}(\alpha,\beta,\gamma,\delta)=\sigma.
\end{equation*}
Let $(I_k^j)$ for $j \in [1,4]$ be defined as in the seventh step with $(a,b,c,d)=(a_\sigma,b_\sigma,c_\sigma,d_\sigma)$ and $(\alpha,\beta,\gamma,\delta)$ given by \cref{alpha-in-8}. We want to prove that $(I_k^j)=(i_k^j)$. It suffices to prove that
\begin{equation*}
\alpha^{-1}([1,a_\sigma])=\{i_1^1,\dots,i_{a_{\sigma}}^1\},
\end{equation*}
\begin{equation*}
\alpha^{-1}([a_\sigma+1,n])=\{i_1^2,\dots,i_{b_\sigma}^2\},
\end{equation*}
\begin{equation*}
\beta^{-1}([1,c_\sigma])=\{i_1^3-n,\dots,i_{c_\sigma}^3-n\},
\end{equation*}
\begin{equation*}
\beta^{-1}([c_\sigma+1,p])=\{i_1^4-n,\dots,i_{d_\sigma}^4-n\}.
\end{equation*}
But these are implied by the following identities:
\begin{equation*}
\left\{ 
\begin{aligned}
\alpha(i^1_k)&=u(k) \in [1,a_\sigma]~&\text{for every }k \in [1,a_\sigma], \\
\alpha(i^2_k)&=v(k)+a_\sigma \in [a_\sigma+1,n]~&\text{for every }k \in [1,b_\sigma],
\end{aligned}
\right.
\end{equation*}
\begin{equation*}
\left\{ 
\begin{aligned}
\beta(i^3_k-n)&=w(k) \in [1,c_\sigma]~&\text{for every }k \in [1,c_\sigma], \\
\beta(i^4_k-n)&=x(k)+c_\sigma \in [c_\sigma+1,p]~&\text{for every }k \in [1,d_\sigma].
\end{aligned}
\right.
\end{equation*}
By using \cref{the-formula-for-phi'} and the definition of $\psi_\sigma$ from the fifth step, we obtain that for every $k \in [1,a_\sigma]$, we have
\begin{equation*}
\phi'_{a_\sigma,b_\sigma,c_\sigma,d_\sigma}(\alpha,\beta,\gamma,\delta)(i_k^1)=\gamma(\alpha(i_k^1))
=\gamma(u(k))
=\sigma(i^1_k),
\end{equation*}
for every $k \in [1,b_\sigma]$, we have
\begin{equation*}
\phi'_{a_\sigma,b_\sigma,c_\sigma,d_\sigma}(\alpha,\beta,\gamma,\delta)(i_k^2)=\delta(\alpha(i_k^2)-a_\sigma)+q
=\delta(v(k))+q
=\sigma(i^2_k),
\end{equation*}
for every $k \in [1,c_\sigma]$, we have
\begin{equation*}
\phi'_{a_\sigma,b_\sigma,c_\sigma,d_\sigma}(\alpha,\beta,\gamma,\delta)(i_k^3)=\gamma(\beta(i_k^3-n)+a_\sigma)
=\gamma(w(k)+a_\sigma)
=\sigma(i^3_k),
\end{equation*}
and for every $k \in [1,d_\sigma]$, we have
\begin{equation*}
\phi'_{a_\sigma,b_\sigma,c_\sigma,d_\sigma}(\alpha,\beta,\gamma,\delta)(i_k^4)=\delta(\beta(i_k^4-n)b_\sigma-c_\sigma)+q
=\delta(x(k)+b_\sigma)+q
=\sigma(i_k^4).
\end{equation*}
Since 
\begin{equation*}
[1,n+p]=\{i_k^1,~1 \le k \le a_\sigma\} \cup \{i_k^2,~1 \le k \le b_\sigma\} \cup \{i_k^3,~1 \le k \le c_\sigma\} \cup \{i_k^4,~1 \le k \le d_\sigma\},
\end{equation*}
we conclude that $\phi'_{a_\sigma,b_\sigma,c_\sigma,d_\sigma}(\alpha,\beta,\gamma,\delta)=\sigma$.

\textbf{(ii)} We will now prove that $(\phi'_{a_\sigma,b_\sigma,c_\sigma,d_\sigma})^{-1}(\sigma) \subseteq \mathrm{im}\,\psi_\sigma$. Let $(\alpha,\beta,\gamma,\delta) \in S_n \times S_p \times S_q \times S_r$ such that 
\begin{equation*}
\phi'_{a_\sigma,b_\sigma,c_\sigma,d_\sigma}(\alpha,\beta,\gamma,\delta)=\sigma.
\end{equation*}
Let $(I^j_k)$ for $j \in [1,4]$ be defined as in the seventh step with $(a,b,c,d)=(a_\sigma,b_\sigma,c_\sigma,d_\sigma)$ and the tuple $(\alpha,\beta,\gamma,\delta)$ we have just introduced. We define $(u,v,w,x) \in S_{a_\sigma} \times S_{b_\sigma} \times S_{c_\sigma} \times S_{d_\sigma}$ by the following formulas:\footnote{We must check that $u,v,w,x$ are permutations in the appropriate symmetric groups. It suffices to first verify that the formulas for $u,v,w,x$ define endofunctions respectively on $[1,a_\sigma]$, $[1,b_\sigma]$, $[1,c_\sigma]$ and $[1,d_\sigma]$, and then to verify that the following formulas provide the inverses of $u,v,w,x$:
\begin{equation*}
u^{-1}(l)=k~\text{such that }\alpha^{-1}(l)=I_k^1, \text{ for every }l \in [1,a_\sigma],
\end{equation*}
\begin{equation*}
v^{-1}(l)=k~\text{such that }\alpha^{-1}(a_\sigma+l)=I_k^2, \text{ for every }l \in [1,b_\sigma],
\end{equation*}
\begin{equation*}
w^{-1}(l)=k~\text{such that }\beta^{-1}(l)=I_k^3-n, \text{ for every }l \in [1,c_\sigma],
\end{equation*}
\begin{equation*}
x^{-1}(l)=k~\text{such that }\beta^{-1}(c_\sigma+l)=I_k^4-n, \text{ for every }l \in [1,d_\sigma].
\end{equation*}
}
\begin{equation*}
u(k)=\alpha(I_k^1)~\text{for every }k \in [1,a_\sigma],
\end{equation*}
\begin{equation*}
v(k)=\alpha(I_k^2)-a_\sigma~\text{for every }k \in [1,b_\sigma],
\end{equation*}
\begin{equation*}
w(k)=\beta(I_k^3-n)~\text{for every }k \in [1,c_\sigma],
\end{equation*}
\begin{equation*}
x(k)=\beta(I_k^4-n)-c_\sigma~\text{for every }k \in [1,c_\sigma].
\end{equation*}
It could be helpful to look at \cref{fig:proof-sum-perm} to understand these formulas.

We want to prove that $(I_k^j)=(i_k^j)$. For every $k \in [1,a_\sigma]$, we have $I_k^1 \in [1,n]$ (from the definitions of the $I_k^j$ in the seventh step) and we obtain from \cref{the-formula-for-phi'} that $\sigma(I_k^1) \in [1,q]$, it follows that $I_k^1 \in \sigma^{-1}([1,q]) \cap [1,n]=\{i_1^1<\dots<i^1_{a_\sigma}\}$.  In the same way, for every $k \in [1,b_\sigma]$, we have $I_k^2 \in [1,n]$ and $\sigma(I_k^2) \in [q+1,n+p]$. Thus $I_k^2 \in \sigma^{-1}([q+1,n+p]) \cap [1,n]=\{i_1^2<\dots<i^2_{b_\sigma}\}$. We also have for every $k \in [1,c_\sigma]$, that $I_k^3 \in [n+1,n+p]$ and $\sigma(I_k^3) \in [1,q]$, thus $I_k^3 \in \{i_1^3<\dots<i_{c_\sigma}^3\}$. Finally, for every $k \in [1,d_\sigma]$, we have $I_k^4 \in [n+1,n+p]$ and $\sigma(I_k^4) \in [q+1,n+p]$, thus $I_k^4 \in \{i_1^4<\dots<i_{d_\sigma}^4\}$. Since $|\{I_1^1<\dots<I_{a_\sigma}^1\}|=a_\sigma=|\{i_1^1<\dots<i_{a_\sigma}^1\}|$, we obtain that $\{I_1^1<\dots<I_{a_\sigma}^1\}=\{i_1^1<\dots<i_{a_\sigma}^1\}$ and thus $I_k^1=i_k^1$ for every $k \in [1,a_\sigma]$. In the same way, we obtain that $I_k^j=i_k^j$ for any $j \in [2,4]$ and any appropriate $k$.

Let 
\begin{equation*}
(\alpha',\beta',\gamma',\delta'):=\psi_\sigma(u,v,w,x).
\end{equation*}
We obtain from the fifth step and \cref{the-formula-for-phi'} that
\begin{equation*}
\left\{ 
\begin{aligned}
\alpha'(i^1_k)&=u(k)=\alpha(I^1_k)=\alpha(i^1_k)~&\text{for every }k \in [1,a_\sigma], \\
\alpha'(i^2_k)&=v(k)+a_\sigma=\alpha(I_k^2)=\alpha(i_k^2)~&\text{for every }k \in [1,b_\sigma], \\
\end{aligned}
\right.
\end{equation*}
\begin{equation*}
\left\{ 
\begin{aligned}
\beta'(i^3_k-n)&=w(k)=\beta(I^3_k-n)=\beta(i^3_k-n)~&\text{for every }k \in [1,c_\sigma], \\
\beta'(i^4_k-n)&=x(k)+c_\sigma=\beta(I_4^k-n)=\beta(i_4^k-n)~&\text{for every }k \in [1,d_\sigma],
\end{aligned}
\right.
\end{equation*}
\begin{equation*}
\left\{ 
\begin{aligned}
\gamma'(l)&=\sigma(i^1_{u^{-1}(l)})=\sigma(I^1_{u^{-1}(l)})
=\gamma(\alpha(I^1_{u^{-1}(l)}))
=\gamma(\alpha(\alpha^{-1}(l)))
=\gamma(l) \\
&\text{for every }l \in [1,a_\sigma], \\
\gamma'(l)&=\sigma(i^3_{w^{-1}(l-a_\sigma)})=\sigma(I^3_{w^{-1}(l-a_\sigma)})=\gamma(\beta(I^3_{w^{-1}(l-a_\sigma)}-n)+a_\sigma)=\gamma(\beta(\beta^{-1}(l-a_\sigma))+a_\sigma)=\gamma(l) \\
&\text{for every }l \in [a_\sigma+1,q],
\end{aligned}
\right.
\end{equation*}
\begin{equation*}
\left\{ 
\begin{aligned}
\delta'(l)&=\sigma(i^2_{v^{-1}(l)})-q=\sigma(I^2_{v^{-1}(l)})-q=\delta(\alpha(I^2_{v^{-1}(l)})-a_\sigma)=
\delta(\alpha(\alpha^{-1}(a_\sigma+l))-a_\sigma)=\delta(l) \\
&\text{for every }l \in [1,b_\sigma], \\
\delta'(l)&=\sigma(i^4_{x^{-1}(l-b_\sigma)})-q=\sigma(I^4_{x^{-1}(l-b_\sigma)})-q=\delta(\beta(I^4_{x^{-1}(l-b_\sigma)}-n)+b_\sigma-c_\sigma) \\
&\quad=\delta(\beta(\beta^{-1}(c_\sigma+l-b_\sigma))+b_\sigma-c_\sigma)=\delta(l) \\
&\quad\text{for every }l \in [b_\sigma+1,r].
\end{aligned}
\right.
\end{equation*}
It follows that
\begin{equation*}
(\alpha',\beta',\gamma',\delta')=(\alpha,\beta,\gamma,\delta),
\end{equation*}
thus
\begin{equation*}
(\alpha,\beta,\gamma,\delta) \in \mathrm{im}\,\psi_\sigma.
\end{equation*}

\textbf{Ninth step: conclusion.} Let $\sigma \in S_{n+p}$. From the sixth step, the map
\begin{equation*}
\psi_\sigma\colon S_{a_\sigma} \times S_{b_\sigma} \times S_{c_\sigma} \times S_{d_\sigma} \rightarrow S_n \times S_p \times S_q \times S_r
\end{equation*}
is injective and from the eighth step, we have $\mathrm{im}\,\psi_\sigma=(\phi'_{a_\sigma,b_\sigma,c_\sigma,d_\sigma})^{-1}(\sigma)$. We obtain a bijection
\begin{equation*}
(\phi'_{a_\sigma,b_\sigma,c_\sigma,d_\sigma})^{-1}(\sigma) \simeq S_{a_\sigma} \times S_{b_\sigma} \times S_{c_\sigma} \times S_{d_\sigma}.
\end{equation*}
The fourth step implies that \cref{eq-to-prove-in-perm-to-bialg-rewritten} is satisfied. The second step then implies that \cref{eq-to-prove-in-perm-to-bialg} is satisfied. Finally, we obtain from the first step that \cref{AAAB} is satisfied which is what we wanted to prove.
\end{proof}
The rest of the subsection will be much easier than the proof of \cref{PROP1}.
\begin{proposition} \label{1ASSOC}
For all $n,p,q \ge 0$, we have:
\begin{equation} \label{assoc-star}
\resizebox{!}{1.2cm}{\tikzfig{assocA-}}\quad=\quad\resizebox{!}{1.2cm}{\tikzfig{assocB-}}\quad.
\end{equation}
\end{proposition}
\begin{proof}
We prove \cref{assoc-star} for all $n,p,q \ge 0$ using in-line notations. We start with:
\begin{align*}
\nabla_{n,p}^* \otimes  \mathsf{id}_{A_q};\nabla_{n+p,q}^*=&~ s_{n} \otimes s_{p} \otimes  \mathsf{id}_{A_q};r_{n+p} \otimes  \mathsf{id}_{A_q};s_{n+p} \otimes s_{q};r_{n+p+q} \\
=&~s_{n} \otimes s_{p} \otimes s_{q};\left(\frac{1}{(n+p)!}\underset{\sigma \in S_{n+p}}{\sum}\sigma \right) \otimes  \mathsf{id}_{A_q};r_{n+p+q}.
\intertext{Using \cref{INVARIANT}, we can write:}
&= s_{n} \otimes s_{p} \otimes s_{q};r_{n+p+q}.
\end{align*}
But we also have:
\begin{align*}
 \mathsf{id}_{A_n} \otimes \nabla_{p,q}^*;\nabla_{n,p+q}^* &=  \mathsf{id}_{A_n} \otimes s_{p} \otimes s_{q}; \mathsf{id}_{A_n} \otimes r_{p+q};s_{n} \otimes s_{p+q};r_{n+p+q}\\
&= s_{n} \otimes s_{p} \otimes s_{q}; \mathsf{id}_{A_n} \otimes \left(\frac{1}{(p+q)!}\underset{\sigma \in S_{p+q}}{\sum}\sigma \right);r_{n+p+q}. \\
\intertext{Using \cref{INVARIANT}, we can write:}
&= s_{n} \otimes s_{p} \otimes s_{q};r_{n+p+q}.
\end{align*}
Hence the equality.
\end{proof}
\begin{remark} \label{2COASSOC}
The dual of \cref{1ASSOC} gives coassociativity.
\end{remark}
\begin{proposition} \label{4COMM}
For all $n,p \ge 0$, we have:
\begin{equation*}
\resizebox{!}{1.2cm}{\tikzfig{comm}}\quad.
\end{equation*}
\end{proposition}
\begin{proof}
By naturality of the exchange, we can write:
\begin{align*}
\gamma_{A_n,A_p};s_p \otimes s_n;r_{n+p}=&~s_n \otimes s_p;\gamma_{A_1^{\otimes n},A_1^{\otimes p}};r_{n+p}.
\intertext{Using \cref{INVARIANT}, we can then write:}
=&~s_n \otimes s_p;r_{n+p}.\qedhere
\end{align*}
\end{proof}
\begin{remark} \label{5COCOMM}
The dual of \cref{4COMM} gives cocommutativity.
\end{remark}
Note that \cref{INVARIANT} has been used in the proofs of both \cref{1ASSOC} and \cref{4COMM}. It will be used repeatedly throughout the remainder of the paper, in most cases without explicit reference.
\begin{proposition} \label{PROP2}
For all $n,p \ge 0$, we have:
\begin{equation*}
\hspace{-2.5cm}
\resizebox{!}{1.2cm}{\tikzfig{specialtyA-}}\quad=\quad\scalebox{1}{$\binom{n+p}{n}$} \resizebox{!}{1.2cm}{\tikzfig{specialtyB}}\quad.
\end{equation*}
\end{proposition}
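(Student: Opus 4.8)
The plan is to unfold both starred operations into their definitions in terms of the families $(s_{k})$ and $(r_{k})$, and then collapse the resulting composite using the two defining relations of a permutation splitting together with the invariance property of $r_{n+p}$ established in Section~\ref{SIX}. Recalling that $\Delta^{*}_{n,p} = \binom{n+p}{n}\,s_{n+p};(r_{n}\otimes r_{p})$ and $\nabla^{*}_{n,p} = (s_{n}\otimes s_{p});r_{n+p}$ (with the convention $s_{1} = r_{1} = 1_{A_{1}}$, so that the case where $n$ or $p$ equals $1$ is covered), I would first write
$$
\Delta^{*}_{n,p};\nabla^{*}_{n,p}
= \binom{n+p}{n}\, s_{n+p};(r_{n}\otimes r_{p});(s_{n}\otimes s_{p});r_{n+p}
= \binom{n+p}{n}\, s_{n+p};\big((r_{n};s_{n})\otimes(r_{p};s_{p})\big);r_{n+p},
$$
the last step being just the interchange law of the monoidal product.

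Next I would note that $r_{n};s_{n}$ is an $n$-blender: for $n\ge 2$ this is exactly the relation $r_{n};s_{n} = \frac{1}{n!}\sum_{\sigma\in\mathfrak{S}_{n}}\sigma$ from the definition of a permutation splitting, and for $n=1$ it is the identity, which is (the only) $1$-blender. Likewise $r_{p};s_{p}$ is a $p$-blender, so by stability of blenders under tensor product (Section~\ref{SIX}) their tensor $(r_{n};s_{n})\otimes(r_{p};s_{p})$ is an $(n+p)$-blender. Proposition~\ref{INVARIANT} then gives $\big((r_{n};s_{n})\otimes(r_{p};s_{p})\big);r_{n+p} = r_{n+p}$, since $r_{n+p}$ is invariant under precomposition by $(n+p)$-blenders.

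Substituting this back and using the remaining relation of a permutation splitting, $s_{n+p};r_{n+p} = 1_{A_{n+p}}$ (which applies since $n+p\ge 2$), yields
$$
\Delta^{*}_{n,p};\nabla^{*}_{n,p} = \binom{n+p}{n}\, s_{n+p};r_{n+p} = \binom{n+p}{n}\, 1_{A_{n+p}},
$$
which is exactly the claimed identity. I do not anticipate any real difficulty here: the only points needing a moment's care are the bookkeeping for the $n=1$ or $p=1$ boundary cases via the convention $s_{1}=r_{1}=1_{A_{1}}$, and checking that $(r_{n};s_{n})\otimes(r_{p};s_{p})$ genuinely qualifies as an $(n+p)$-blender so that Proposition~\ref{INVARIANT} can be invoked; after that the computation is entirely forced by the interchange law and the two permutation-splitting relations.
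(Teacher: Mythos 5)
Your proof is correct and follows essentially the same route as the paper: unfold $\Delta^{*}_{n,p};\nabla^{*}_{n,p}$, apply the interchange law to get $(r_{n};s_{n})\otimes(r_{p};s_{p})$, absorb this into $r_{n+p}$, and finish with $s_{n+p};r_{n+p}=1$. The only cosmetic difference is that the paper carries out the absorption step by writing out the double sum $\frac{1}{n!p!}\sum_{\sigma,\rho}\sigma\otimes\rho$ explicitly, whereas you package it via blender stability under tensor product and Proposition~\ref{INVARIANT}, which amounts to the same computation.
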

\begin{proof}
We compute:
\begin{align*}
\Delta_{n,p}^*;\nabla_{n,p}^*
=&~\binom{n+p}{n}s_{n+p};r_{n} \otimes r_{p};s_{n} \otimes s_{p};r_{n+p} \\
=&~\binom{n+p}{n}s_{n+p};\Big(\frac{1}{n!}\underset{\sigma \in S_n}{\sum}\sigma\Big) \otimes \Big(\frac{1}{p!}\underset{\rho \in S_p}{\sum}\rho\Big);r_{n+p} \\
=&~\binom{n+p}{n}s_{n+p};\frac{1}{n!p!}\underset{\substack{\sigma \in S_n \\ \rho \in S_p}}{\sum} \sigma \otimes \rho;r_{n+p} \\
=&~\binom{n+p}{n}s_{n+p};r_{n+p} \\
=&~\binom{n+p}{n}  \mathsf{id}_{A_{n+p}}.\qedhere
\end{align*}
\end{proof}
\begin{proposition} \label{6UNIT}
The following identities hold:
\begin{align*}
\resizebox{!}{1.2cm}{\tikzfig{PRES18A-S}} &~~=~~ \resizebox{!}{1.2cm}{\tikzfig{PRES18B}}~~,
& \resizebox{!}{1.2cm}{\tikzfig{PRES21A-S}} &~~=~~ \resizebox{!}{1.2cm}{\tikzfig{PRES21B}}~~,
&\resizebox{!}{1.2cm}{\tikzfig{PRES22E-S}} &~~=\qquad~~,
&\resizebox{!}{1.2cm}{\tikzfig{PRES23-S}}&~~=~~\resizebox{!}{1.2cm}{\tikzfig{PRES24}}~~.
\end{align*}
\end{proposition}
\begin{proof}
We compute:
\begin{align*}
\eta^* \otimes \mathsf{id}_{A_n};\nabla_{0,n}^*=&~r_0 \otimes \mathsf{id}_{A_n};s_0 \otimes s_n;r_n \\
=&~(r_0;s_0) \otimes s_n;r_n \\
=&~\mathsf{id}_{I} \otimes s_n;r_n \\
=&~s_n;r_n \\
=&~\mathsf{id}_{A_n},
\end{align*}
\begin{align*}
\Delta_{0,n}^*;\epsilon^* \otimes \mathsf{id}_{A_n}=&~s_n;r_0 \otimes r_n;s_0 \otimes \mathsf{id}_{A_n} \\
=&~s_n;(r_0;s_0) \otimes r_n \\
=&~s_n;\mathsf{id}_I \otimes r_n \\
=&~s_n;r_n \\
=&~\mathsf{id}_{A_n},
\end{align*}
\begin{align*}
\eta^*;\epsilon^*=&~r_0;s_0 \\
=&~\mathsf{id}_I,
\end{align*}
\begin{align*}
\epsilon^*;\eta^*=&~s_0;r_0 \\
=&~\mathsf{id}_{A_0}.\qedhere
\end{align*}
\end{proof}
We can conclude the following from Propositions and Remarks \labelcref{PROP1} to \labelcref{6UNIT}.
\begin{proposition} \label{perm-to-long}
Let $(\mathsf{C},\otimes,I)$ be a $\mathbb{Q}_{\ge 0}$-linear symmetric monoidal category. Let $(A_n) \in \mathsf{C}^{\mathbb{N}}$ and let $(r_n)_{n \ge 0},(s_n)_{n \ge 0}$ be families of maps which make the graded object $(A_n)$ into a permutation splitting. Define the maps $(\nabla_{n,p})_{n,p \ge 0}$, $(\Delta_{n,p})_{n,p \ge 0}$, $\eta$, $\epsilon$ as in \cref{main-theorem}. The graded object $(A_n)$ together with these maps is a binomial bimonoid.
\end{proposition}
\subsection{Biassociativity from the short definition of a binomial bimonoid} \label{FIVE}
Recall that in this section, we work in a fixed $\mathbb{Q}_{\ge 0}$-linear symmetric monoidal category $(\mathsf{C},\otimes,I)$. In this subsection, we prove that given any graded object $(A_n) \in \mathsf{C}^\mathbb{N}$ together with maps $(\nabla_{n,p}\colon A_n \otimes A_p \rightarrow A_{n+p})_{n,p \ge 0}$, $(\Delta_{n,p}\colon A_{n+p} \rightarrow A_n \otimes A_p)_{n,p \ge 0}$, $\eta\colon I \rightarrow A_0$ and $\epsilon\colon A_0 \rightarrow I$ such that all the axioms in \cref{fig:bialg-axiom-short} hold, the multiplication is associative and the comultiplication is coassociative.
We will use this result in the proof of the bijectivity in \cref{main-theorem}.
\begin{lemma} \label{lemma:assoc-bimagma}
Suppose given a graded object $(A_n) \in \mathsf{C}^\mathbb{N}$ together with maps $(\nabla_{n,p}\colon A_n \otimes A_p \rightarrow A_{n+p})_{n,p \ge 0}$, $(\Delta_{n,p}\colon A_{n+p} \rightarrow A_n \otimes A_p)_{n,p \ge 0}$, $\eta\colon I \rightarrow A_0$ and $\epsilon\colon A_0 \rightarrow I$ such that all the axioms in \cref{fig:bialg-axiom-short} hold except the axiom $\Delta_{n,p};\nabla_{n,p}=\binom{n+p}{n}\mathsf{id}_{A_{n+p}}$. Suppose moreover that $\Delta_{n,1}\colon A_{n+1} \rightarrow A_n \otimes A_1$ is a monomorphism for every $n \ge 0$. The multiplication is associative, that is, we have:
\begin{equation} \label{prop:assoc}
\resizebox{!}{1.2cm}{\tikzfig{assocA}}\quad=\quad\resizebox{!}{1.2cm}{\tikzfig{assocB}}\quad
\end{equation}
for all $n,p,q \ge 0$.
\end{lemma}
\begin{proof}
The proof is by induction on $k=n+p+q \ge 0$. We first prove the base case, that is, when $k=0$:
\begin{equation*}
\resizebox{!}{1cm}{\tikzfig{ZEROA}}~=~\resizebox{!}{1cm}{\tikzfig{ZEROB}}~=~\resizebox{!}{1cm}{\tikzfig{ZEROC}}~=~\resizebox{!}{1cm}{\tikzfig{ZEROD}}
\end{equation*}
\begin{equation*}
~~=~\resizebox{!}{1cm}{\tikzfig{ZEROE}}~~=~\resizebox{!}{1cm}{\tikzfig{ZEROF}}~~=~\resizebox{!}{1cm}{\tikzfig{ZEROG}}~.
\end{equation*}
Suppose now that \cref{prop:assoc} holds for some $k \ge 0$. We will prove that it holds for $k+1$. Let $n,p,q \ge 0$ such that $n+p+q=k+1$. We will prove that
\begin{equation} \label{assoc-then-mono}
\resizebox{!}{1.2cm}{\tikzfig{autoassoproof5A}}~=~\resizebox{!}{1.2cm}{\tikzfig{autoassoproof7A}}
\end{equation}
from which \cref{prop:assoc} follows since $\Delta_{n+p+q,1}$ is a monomorphism.
We first use \cref{compatibility-intro-graded-2} twice:
\begin{align} \label{PATHASSOC1}
\resizebox{!}{1.2cm}{\tikzfig{autoassoproof5A}} &\quad=\quad \underset{\substack{a,b,c,d \ge 0 \\ a+b = n+p \\ c+d = q \\ a+c = n+p+q-1 \\ b+d=1}}{\sum} \resizebox{!}{1.2cm}{\tikzfig{autoassoproof5B}} \notag \\
&\quad=\quad\underset{\substack{a,b,c,d \ge 0 \\ a+b = n+p \\ c+d = q \\ a+c = n+p+q-1 \\ b+d=1}}{\sum} 
\underset{\substack{e,f,g,h \ge 0 \\ e+f = n \\ g+h = p \\ e+g = a \\ f+h=b}}{\sum}
\resizebox{!}{1.2cm}{\tikzfig{autoassoproof5C}}\notag
\intertext{and then rewrite the string diagrams in a different form:}
&\quad=\quad\underset{\substack{a,b,c,d \ge 0 \\ a+b = n+p \\ c+d = q \\ a+c = n+p+q-1 \\ b+d=1}}{\sum} 
\underset{\substack{e,f,g,h \ge 0 \\ e+f = n \\ g+h = p \\ e+g = a \\ f+h=b}}{\sum}
\resizebox{!}{1.2cm}{\tikzfig{autoassoproof6}} \quad.
\end{align}
We again use \cref{compatibility-intro-graded-2} twice, as follows:
\begin{align} \label{PATHASSOC2}
\resizebox{!}{1.2cm}{\tikzfig{autoassoproof7A}} &\quad=\quad \underset{\substack{a',b',c',d' \ge 0 \\ a'+b' = n \\ c'+d' = p+q \\ a'+c' = n+p+q-1 \\ b'+d'=1}}{\sum} \resizebox{!}{1.2cm}{\tikzfig{autoassoproof7B}} \notag \\
&\quad=\quad \underset{\substack{a',b',c',d' \ge 0 \\ a'+b' = n \\ c'+d' = p+q \\ a'+c' = n+p+q-1 \\ b'+d'=1}}{\sum} 
\underset{\substack{e',f',g',h' \ge 0 \\ e'+f' = p \\ g'+h' = q \\ e'+g' = c' \\ f'+h'=d'}}{\sum}
\resizebox{!}{1.2cm}{\tikzfig{autoassoproof8}}\quad, \notag \\
\intertext{we rewrite the string diagrams in a different form:}
&\quad=\quad\underset{\substack{a',b',c',d' \ge 0 \\ a'+b' = n \\ c'+d' = p+q \\ a'+c' = n+p+q-1 \\ b'+d'=1}}{\sum} 
\underset{\substack{e',f',g',h' \ge 0 \\ e'+f' = p \\ g'+h' = q \\ e'+g' = c' \\ f'+h'=d'}}{\sum}
\resizebox{!}{1.2cm}{\tikzfig{autoassoproof8-}}\notag \\
\intertext{and use the hypothesis of induction for $k=n+p+q-1$ and $k=1$:}
&\quad=\quad\underset{\substack{a',b',c',d' \ge 0 \\ a'+b' = n \\ c'+d' = p+q \\ a'+c' = n+p+q-1 \\ b'+d'=1}}{\sum} 
\underset{\substack{e',f',g',h' \ge 0 \\ e'+f' = p \\ g'+h' = q \\ e'+g' = c' \\ f'+h'=d'}}{\sum}
\resizebox{!}{1.2cm}{\tikzfig{autoassoproof9-}} \quad.
\end{align}
We now check that the terms in the double sums in \cref{PATHASSOC1} and \cref{PATHASSOC2} are exactly the same. If we choose a term in the double sum in \cref{PATHASSOC1}, it is also a term in the double sum in \cref{PATHASSOC2} by setting:
\begin{equation*}
(a',b',c',d',e',f',g',h') := (e,f,g+c,d+h,g,h,c,d).
\end{equation*}
If we choose a term in the double sum in \cref{PATHASSOC2}, it is also a term in the double sum in \cref{PATHASSOC1} by setting:
\begin{equation*}
(a,b,c,d,e,f,g,h) := (a'+e',b'+f',g',h',a',b',e',f').
\end{equation*} 
\cref{assoc-then-mono} is thus satisfied for $n+p+q=k+1$.
\end{proof}
\begin{proposition}\label{ASSOC}
Suppose given a graded object $(A_n) \in \mathsf{C}^\mathbb{N}$ together with maps $(\nabla_{n,p}\colon A_n \otimes A_p \rightarrow A_{n+p})_{n,p \ge 0}$, $(\Delta_{n,p}\colon A_{n+p} \rightarrow A_n \otimes A_p)_{n,p \ge 0}$, $\eta\colon I \rightarrow A_0$ and $\epsilon\colon A_0 \rightarrow I$ such that all the axioms in \cref{fig:bialg-axiom-short} hold. The multiplication is associative, that is, we have:
\begin{equation*}
\resizebox{!}{1.2cm}{\tikzfig{assocA}}\quad=\quad\resizebox{!}{1.2cm}{\tikzfig{assocB}}\quad
\end{equation*}
for all $n,p,q \ge 0$.
\end{proposition}
\begin{proof}
\cref{special-intro-diag} implies that 
\begin{equation*}
\Delta_{n,1};\bigg(\frac{1}{n+1}\bigg)\nabla_{n,1}=\frac{1}{n+1}\Delta_{n,1};\nabla_{n,1}=\frac{1}{n+1}(n+1)\mathsf{id}_{A_{n+1}}=\mathrm{id}_{A_{n+1}}
\end{equation*}
for every $n \ge 0$. Thus $\Delta_{n,1}$ is a split monomorphism for any $n \ge 0$ and \cref{lemma:assoc-bimagma} applies.
\end{proof}
\begin{remark} \label{ASSOC-REM}
The duals of \cref{lemma:assoc-bimagma} and \cref{ASSOC} are obtained by replacing associativity with coassociativity.
\end{remark}
\begin{remark}
The idea of using monomorphic multiplication and comultiplication maps comes from \cite{Ard}, where a graded monoid with epimorphic multiplication maps and a graded comonoid with monomorphic multiplication maps are called respectively strongly graded algebras and strongly graded coalgebras. This idea is not used further in our paper. Strongly graded algebras and coalgebras are used for other purposes in \cite{Ard}. In particular this reference only treats monoids and comonoids which are supposed respectively associative and coassociative from the start.
\end{remark}
\subsection{Bijectivity of the correspondence} \label{EIGHT}
In this subsection, we verify that the functions from maps making $(A_n)_{n \ge 0}$ into a permutation splitting to maps making $(A_n)_{n \ge 0}$ into a binomial bimonoid and vice versa in \cref{main-theorem} are inverses of each other.

Recall that given $(A_n)_{n \ge 0} \in \mathsf{C}^\mathbb{N}$, \cref{prop-short-to-perm} explains how to go from maps satisfying the axioms in \cref{fig:bialg-axiom-short} to maps making $(A_n)_{n \ge 0}$ into a permutation splitting. Recall also that \cref{perm-to-long} explains how to go from a permutation splitting with underlying graded object $(A_n)_{n \ge 0}$ to a binomial bimonoid with underlying graded object $(A_n)_{n \ge 0}$.
\begin{proposition}\label{BIJEC1}
If we start with operations $(\nabla_{n,p})_{n,p \ge 0}$, $(\Delta_{n,p})_{n,p \ge 0}$, $\eta$, $\epsilon$ on a graded object $(A_n)_{n \ge 0}$ satisfying all the axioms in \cref{fig:bialg-axiom-short}, obtain a permutation splitting by defining the maps $(r_n)_{n \ge 0}$, $(s_n)_{n \ge 0}$ as in \cref{main-theorem}, and finally define the maps $(\nabla_{n,p}^*)_{n,p \ge 0}$, $(\Delta_{n,p}^*)_{n,p \ge 0}$, $\eta^*$, $\epsilon^*$ making $(A_n)_{n \ge 0}$ into a binomial bimonoid as in \cref{main-theorem}, then we obtain that
\begin{equation*}
\nabla_{n,p}^*=\nabla_{n,p},\quad\Delta_{n,p}^*=\Delta_{n,p}
\end{equation*}
for all $n,p \ge 0$ and
\begin{equation*}
\eta^*=\eta,\quad \epsilon^*=\epsilon.
\end{equation*}
\end{proposition} 
\begin{proof}
Let $(A_n)_{n \ge 0}$ with appropriate maps satisfying all the axioms in \cref{fig:bialg-axiom-short}. Recall from \cref{prop-short-to-perm} that a permutation splitting is given by
\begin{equation} \label{BBB1}
\resizebox{!}{1.2cm}{\tikzfig{r-n}}\quad:=\quad \resizebox{!}{1.2cm}{\tikzfig{rn-e}}
~,\qquad
\resizebox{!}{1.2cm}{\tikzfig{s-n}}\quad:=\quad \frac{1}{n!}\resizebox{!}{1.2cm}{\tikzfig{sn-e}}
\end{equation}
for every $n \ge 1$, and
\begin{equation*}
r_0:=\eta,\quad s_0:=\epsilon.
\end{equation*}
Then, by \cref{perm-to-long}, a binomial bimonoid is given by
\begin{equation} \label{BBB2}
\resizebox{!}{1cm}{\tikzfig{nablanp-}} \quad:=\quad \resizebox{!}{1.2cm}{\tikzfig{s-np-r-n+p}}~,
\qquad
\resizebox{!}{1cm}{\tikzfig{deltanp-}} \quad:=\quad 
\scalebox{1.2}{$\binom{n+p}{n}$} 
\resizebox{!}{1.2cm}{\tikzfig{r-n+p-s-np}}
\end{equation}
for all $n,p \ge 0$, and
\begin{equation*}
\eta^*=r_0,\quad \epsilon^*=s_0.
\end{equation*}
We must prove that $\nabla_{n,p}^*=\nabla_{n,p}$ and $\Delta_{n,p}^*=\Delta_{n,p}$ for all $n,p \ge 0$, and that $\eta^*=\eta$, $\epsilon^*=\epsilon$. The last two identities follow directly from what's above, so we will concentrate on the first two identities. We will separate the proof of these two identities in several distinct cases.
\begin{itemize}
\item We first prove that $\nabla_{n,p}^* = \nabla_{n,p}$ for all $n,p \ge 1$, i.e.~by \cref{BBB1,BBB2} that:
\begin{equation*}
\quad \frac{1}{n!p!} \resizebox{!}{2cm}{\tikzfig{bijec1}}\quad= \quad \resizebox{!}{1cm}{\tikzfig{nablanp}}\quad.
\end{equation*}
\textbf{First step:} we obtain that
\begin{equation*}
\frac{1}{n!p!}\resizebox{!}{2cm}{\tikzfig{bijec1}}\quad=\quad\frac{1}{p!} \resizebox{!}{2cm}{\tikzfig{bijecA}}
\end{equation*}
by using $n-1$ times the identity $\Delta_{k,1};\nabla_{k,1}=(k+1)\mathsf{id}_{A_{k+1}}$.

\textbf{Second step:} we obtain from associativity (proven in \cref{ASSOC}) that
\begin{equation*}
\quad=\quad\frac{1}{p!} \resizebox{!}{2cm}{\tikzfig{bijecB}}\quad.
\end{equation*}
\textbf{Third step:} we obtain the final result by using $p-1$ times the identity $\Delta_{k,1};\nabla_{k,1}=(k+1)\mathsf{id}_{A_{k+1}}$. 
\item We now prove that $\nabla_{n,0}^*=\nabla_{n,0}$ for every $n \ge 1$, i.e.~that
\begin{equation*}
\frac{1}{n!}\resizebox{!}{2cm}{\tikzfig{bijecB-}}\quad=\quad\resizebox{!}{1cm}{\tikzfig{nablan0-}}~~.
\end{equation*}
The proof is as follows:
\begin{equation*}
\frac{1}{n!}\resizebox{!}{2.5cm}{\tikzfig{bijecB-}}\quad=\quad\frac{n!}{n!}\resizebox{!}{1cm}{\tikzfig{nablanp-INTER1}}\quad=\quad\resizebox{!}{1cm}{\tikzfig{nablanp-INTER2}}\quad=\quad\resizebox{!}{1cm}{\tikzfig{nablanp-INTER3}}\quad=\quad\resizebox{!}{1cm}{\tikzfig{nablan0-}}~.
\end{equation*}
\item To prove that $\nabla_{0,p}^*=\nabla_{0,p}$ for every $p \ge 1$, it suffices to apply a left-right symmetry to the proof of $\nabla_{n,0}^*=\nabla_{n,0}$ for every $n \ge 0$.
\item We prove that $\nabla_{0,0}^*=\nabla_{0,0}$ i.e.~that
\begin{equation*}
\resizebox{!}{1cm}{\tikzfig{nabla000}}\quad=\quad\resizebox{!}{1cm}{\tikzfig{nabla00}}~.
\end{equation*}
The proof is as follows:
\begin{equation*}
\resizebox{!}{1cm}{\tikzfig{nabla000}}\quad=\quad\resizebox{!}{1cm}{\tikzfig{nabla000A}}\quad=\quad\resizebox{!}{1cm}{\tikzfig{nabla000B}}\quad=\quad\resizebox{!}{1cm}{\tikzfig{nabla00}}~.
\end{equation*}
\item We now prove $\Delta_{n,p}^* = \Delta_{n,p}$ for all $n,p \ge 1$ i.e.~by \cref{BBB1,BBB2} that:
\begin{equation*}
\frac{1}{(n+p)!} \binom{n+p}{n} \resizebox{!}{2cm}{\tikzfig{bijec2}} \quad = \quad \resizebox{!}{1cm}{\tikzfig{deltanp}}\quad.
\end{equation*}
It suffices to observe that $\frac{1}{(n+p!)}\binom{n+p}{n} = \frac{1}{n!p!}$ and then to  apply an up-down symmetry to the proof of $\nabla_{n,p}^* = \nabla_{n,p}$ for all $n,p \ge 1$.
\item We want to prove that $\Delta_{n,0}^* = \Delta_{n,0}$ for every $n \ge 1$ i.e.~that
\begin{equation*}
\frac{1}{n!}\resizebox{!}{2cm}{\tikzfig{bijecB-0}}\quad=\quad\resizebox{!}{1cm}{\tikzfig{deltan0-}}~.
\end{equation*}
It suffices to apply an up-down symmetry to the proof of $\nabla_{n,0}^*=\nabla_{n,0}$ for every $n \ge 0$.
\item We want to prove that $\Delta_{0,p}^* = \Delta_{0,p}$ for every $p \ge 1$ i.e.~that
\begin{equation*}
\frac{1}{p!}\resizebox{!}{2cm}{\tikzfig{bijecB-0p}}\quad=\quad\resizebox{!}{1cm}{\tikzfig{deltap0-}}~.
\end{equation*}
It suffices to apply an up-down and a left-right symmetry to the proof of $\nabla_{n,0}^*=\nabla_{n,0}$ for every $n \ge 1$.
\item We want to prove that $\Delta_{0,0}^* = \Delta_{0,0}$ i.e.~that
\begin{equation*}
\resizebox{!}{1cm}{\tikzfig{nabla000-}}\quad=\quad\resizebox{!}{1cm}{\tikzfig{nabla00-}}~.
\end{equation*}
It suffices to apply an up-down symmetry to the proof of $\nabla_{0,0}^*=\nabla_{0,0}$.\qedhere
\end{itemize}
\end{proof}
We obtain \cref{main-theorem-2} as a consequence of \cref{BIJEC1}: starting from a graded object $(A_n)$ in a $\mathbb{Q}_{\ge 0}$-linear symmetric monoidal category, with operations $(\nabla_{n,p})_{n,p \ge 0}$, $(\Delta_{n,p})_{n,p \ge 0}$, $\eta$, $\epsilon$ satisfying the axioms in \cref{fig:bialg-axiom-short}, we obtain 
a binomial bimonoids with underlying graded object $(A_n)$ and operations $(\nabla_{n,p}^*)_{n,p \ge 0}$, $(\Delta_{n,p}^*)_{n,p \ge 0}$, $\eta^*$, $\epsilon^*$. But \cref{BIJEC1} implies that the operations with a superscipt $*$ are in fact equal to the operations without a superscript. It follows that we already had a binomial bimonoid from the start. Conversely, if we start with a binomial bimonoid, then all the axioms in \cref{fig:bialg-axiom-short} are either axioms in the definition of a binomial bimonoid, or the right unitality or counitality axiom, which follows from either left unitality and commutativity, or left counitality and cocommutativity. All the axioms in \cref{fig:bialg-axiom-short} are thus satisfied.
\begin{proposition} \label{BIJEC2}
If we start with maps $(r_n)_{n \ge 0}$, $(s_n)_{n \ge 0}$ making an $\mathbb{N}$-graded object $(A_n)_{n \ge 0}$ into a permutation splitting, then obtain a binomial bimonoid by defining the maps $(\nabla_{n,p})_{n,p \ge 0}$, $(\Delta_{n,p})_{n,p \ge 0}$, $\eta$, $\epsilon$ as in \cref{main-theorem}, and finally define the maps $(r_n^*)_{n \ge 0}$, $(s_n^*)_{n \ge 0}$ to obtain a permutation splitting from this binomial bimonoid as in \cref{main-theorem}, we obtain that
\begin{equation*}
r_n^*=r_n,\quad s_n^*=s_n
\end{equation*}
for every $n \ge 0$.
\end{proposition} 
\begin{proof}
Let $(A_n,r_n,s_n)_{n \ge 0}$ be a permutation splitting. Recall from \cref{perm-to-long} that a binomial bimonoid is given by
\begin{equation} \label{AAA1}
\resizebox{!}{1cm}{\tikzfig{nablanp}} \quad:=\quad \resizebox{!}{1.2cm}{\tikzfig{s-np-r-n+p}}~,\qquad
\resizebox{!}{1cm}{\tikzfig{deltanp}} \quad:=\quad 
\scalebox{1.2}{$\binom{n+p}{n}$} 
\resizebox{!}{1.2cm}{\tikzfig{r-n+p-s-np}}
\end{equation}
for all $n,p \ge 0$, and
\begin{equation*}
\eta:=r_0,\quad \epsilon:=s_0.
\end{equation*}
Then, by \cref{prop-short-to-perm}, a permutation splitting is given by
\begin{equation} \label{AAA2}
\resizebox{!}{1.2cm}{\tikzfig{r-nB}}\quad:=\quad \resizebox{!}{1.2cm}{\tikzfig{rn-e}}~,\qquad
\resizebox{!}{1.2cm}{\tikzfig{s-nB}}\quad:=\quad \frac{1}{n!}\resizebox{!}{1.2cm}{\tikzfig{sn-e}}
\end{equation}
for every $n \ge 1$ and
\begin{equation*}
r_0:=\eta,\quad s_0:=\epsilon.
\end{equation*}
We will prove that $r_n^*=r_n$ and $s_n^*=s_n$ for every $n \ge 0$. It is immediate from what's above that $r_0^*=r_0$ and $s_0^*=s_0$. We will thus focus on the cases where $n \ge 1$. We proceed with in-line equations.
\begin{itemize}
\item We start by proving that
\begin{equation*}
r_{n}^* = r_{n},
\end{equation*}
for every $n \ge 1$ i.e.~by \cref{AAA1,AAA2} that 
\begin{equation*}
s_{1}^{\otimes n};((r_2;s_2) \otimes \mathsf{id}_{A_1^{\otimes(n-2)}});\cdots;((r_{n-1};s_{n-1}) \otimes \mathsf{id}_{A_1});r_{n}=r_n.
\end{equation*}
First, $s_{1} = \mathsf{id}_{A_1}$ and thus
\begin{equation*}
r_{n}^* = ((r_2;s_2) \otimes \mathsf{id}_{A_1^{\otimes (n-2)}});\cdots;((r_{n-1};s_{n-1}) \otimes \mathsf{id}_{A_1});r_{n}.
\end{equation*}
Then observe that 
\begin{equation*}
((r_2;s_2) \otimes \mathsf{id}_{A_1^{\otimes (n-2)}});\cdots;((r_{n-1};s_{n-1}) \otimes \mathsf{id}_{A_1})
\end{equation*}
is a permutation average which is annihilated by the following $r_{n}$, thus 
\begin{equation*}
r_n^* = r_n.
\end{equation*}
\item We must also show that 
\begin{equation*}
s_{n}^* = s_{n},
\end{equation*}
for every $n \ge 1$ i.e.~by \cref{AAA1,AAA2} that 
\begin{align*}
\frac{1}{n!}\binom{n}{n-1}\binom{n-1}{n-2}\cdots\binom{2}{1} s_{n};((r_{n-1};s_{n-1}) \otimes \mathsf{id}_{A_1})&; \\
((r_{n-2};s_{n-2}) \otimes \mathsf{id}_{A_1^{\otimes 2}});\cdots;((r_2;s_2) \otimes \mathsf{id}_{A_1^{\otimes (n-2)}});r_{1}^{\otimes n}&=s_{n}.
\end{align*}
First, we have
\begin{equation*}
\binom{n}{n-1}\binom{n-1}{n-2}\cdots\binom{2}{1} = n(n-1)\cdots2=n!
\end{equation*}
and thus
\begin{equation*}
s_n^*=s_{n};((r_{n-1};s_{n-1}) \otimes \mathsf{id}_{A_1});
((r_{n-2};s_{n-2}) \otimes \mathsf{id}_{A_1^{\otimes 2}});\cdots;((r_2;s_2) \otimes \mathsf{id}_{A_1^{\otimes (n-2)}});r_{1}^{\otimes n}.
\end{equation*}
Then observe that 
\begin{equation*}
r_1 = \mathsf{id}_{A_1}
\end{equation*}
and thus that 
\begin{equation*}
s_n^*=s_{n};((r_{n-1};s_{n-1}) \otimes \mathsf{id}_{A_1});((r_{n-2};s_{n-2}) \otimes \mathsf{id}_{A_1^{\otimes 2}});\cdots;((r_2;s_2) \otimes \mathsf{id}_{A_1^{\otimes (n-2)}}).
\end{equation*}
Finally we use that 
\begin{equation*}
((r_{n-1};s_{n-1}) \otimes \mathsf{id}_{A_1});((r_{n-2};s_{n-2}) \otimes \mathsf{id}_{A_1^{\otimes 2}});\cdots;((r_2;s_2) \otimes \mathsf{id}_{A_1^{\otimes (n-2)}})
\end{equation*} 
is a permutation average and thus 
\begin{equation*}
s_n^*=s_n.\qedhere
\end{equation*}
\end{itemize}
\end{proof}
We obtain \cref{main-theorem} as a consequence of \cref{prop-short-to-perm,perm-to-long,BIJEC1,BIJEC2}.
\section{Being a binomial bimonoid: property or structure?} \label{SEC:FIVE}
In this section, we will be interested in the following question: is being a binomial bimonoid in an additive symmetric monoidal category a property or a structure? To make sense of this question, we first need to define morphisms between binomial bimonoids. 
\subsection{Categories of binomial bimonoids and of permutation splittings} \label{SUBSEC:FIVE-ONE}
In this subsection, we define several categories of binomial bimonoids and permutation splittings, and functors between them.
\begin{definition}
Let $(\mathsf{C},\otimes,I)$ be an additive symmetric monoidal category. Let $(A_n)$ and $(B_n)$ be binomial bimonoids. A \emph{morphism of binomial bimonoids} from $(A_n)$ to $(B_n)$ is any graded morphism\footnote{If $\mathsf{C}$ is a category, we call \emph{graded morphism} any morphism in $\mathsf{C}^\mathbb{N}$.} $(f_n)$ from $(A_n)$ to $(B_n)$ such that the following identities are satisfied:
\begin{enumerate}
\item $\eta;f_0=\eta\colon I \rightarrow B_0$,
\item $f_0;\epsilon=\epsilon\colon A_0 \rightarrow I$,
\item $f_n \otimes f_p;\nabla_{n,p}=\nabla_{n,p};f_{n+p}\colon A_n \otimes A_p \rightarrow B_{n+p}$ for all $n,p \ge 0$,
\item $\Delta_{n,p};f_n \otimes f_p=f_{n+p};\Delta_{n,p}\colon A_{n+p} \rightarrow B_n \otimes B_p$ for all $n,p \ge 0$.
\end{enumerate}
\end{definition}
\begin{definition}
Let $(\mathsf{C},\otimes,I)$ be a $\mathbb{Q}_{\ge 0}$-linear symmetric monoidal category. Let $(A_n)$ and $(B_n)$ be two permutation splittings. A \emph{morphism of permutation splittings} from $(A_n)$ to $(B_n)$ is any graded morphism $(f_n)$ from $(A_n)$ to $(B_n)$ such that the following identities are satisfied:
\begin{enumerate}
\item $f_1^{\otimes n};r_n=r_n;f_n\colon A_1^{\otimes n} \rightarrow B_n$ for every $n \ge 0$,
\item $s_n;f_1^{\otimes n}=f_n;s_n\colon A_n \rightarrow B_1^{\otimes n}$ for every $n \ge 0$.
\end{enumerate}
\end{definition}
We introduce four categories that we will prove are isomorphic.
\begin{definition}
Let $(\mathsf{C},\otimes,I)$ be a $\mathbb{Q}_{\ge 0}$-linear symmetric monoidal category.
\begin{enumerate}
\item The category $\mathsf{BinBimon}(\mathsf{C})$ has for objects the binomial bimonoids in $(\mathsf{C},\otimes,I)$ and for morphisms the morphisms of binomial bimonoids between them. The composition and the identities are the same as in $\mathsf{C}^\mathbb{N}$.
\item The category $\mathsf{BinBimon}_1(\mathsf{C})$ has for objects the binomial bimonoids in $(\mathsf{C},\otimes,I)$ and a morphism from $(A_n)$ to $(B_n)$ is any morphism from $A_1$ to $B_1$. The identity on $(A_n)$ is $\mathsf{id}_{A_1}$.
\item The category $\mathsf{PermSplit}(\mathsf{C})$ has for objects the permutation splittings in $(\mathsf{C},\otimes,I)$ and for morphisms the morphisms of permutation splittings between them. The composition and the identities are the same as in $\mathsf{C}^\mathbb{N}$.
\item The category $\mathsf{PermSplit}_1(\mathsf{C})$ has for objects the permutation splittings in $(\mathsf{C},\otimes,I)$ and a morphism from $(A_n)$ to $(B_n)$ is any morphism from $A_1$ to $B_1$. The identity on $(A_n)$ is $\mathsf{id}_{A_1}$.
\end{enumerate}
\end{definition}
\begin{remark}
A morphism $(f_n)$ in $\mathsf{BinBimon}(\mathsf{C})$ or $\mathsf{PermSplit}(\mathsf{C})$ is an isomorphism iff $f_n$ is an isomorphism for every $n \ge 0$.
\end{remark}
In the rest of the section, we suppose given a $\mathbb{Q}_{\ge 0}$-linear symmetric monoidal category $(\mathsf{C},\otimes,I)$. We will introduce some functors:
\begin{equation*}
\begin{tikzcd}
\mathsf{BinBimon}(\mathsf{C}) \arrow[rr, "F_{1}", shift left] &  & \mathsf{PermSplit}(\mathsf{C}) \arrow[rr, "F_{2}", shift left] \arrow[ll, "G_{1}", shift left] &  & \mathsf{PermSplit}_{1}(\mathsf{C}) \arrow[rr, "F_{3}", shift left] \arrow[ll, "G_{2}", shift left] &  & \mathsf{BinBimon}_{1}(\mathsf{C}) \arrow[ll, "G_{3}", shift left]
\end{tikzcd}~.
\end{equation*}
We will show that these are isomorphisms with $F_i^{-1}=G_i$ for any $i \in [1,3]$. 

On objects, these functors are defined as expected from \cref{main-theorem}. Let $(A_n)$ be a binomial bimonoid. We define $F_1((A_n))=G_3((A_n))$ as the permutation splitting obtained by keeping the same underlying graded object and equipping it with the operations defined in \cref{def-fun:-1,def-fun:2}. Let $(A_n)_{n \ge 0}$ be a permutation splitting. We define $G_1((A_n))=F_3((A_n))$ as the binomial bimonoid obtained by keeping the same underlying graded object and equipping it with the operations defined in \cref{perm-from-bimon-1,perm-from-bimon-2}. Finally, $F_2$ and $G_2$ act as the identity on objects. The fact that $G_i=F_i^{-1}$ on objects is either trivial or a consequence of \cref{main-theorem}.

From \cref{extension}, we have morphisms $\nabla_{1,\dots,1}~(n \text{ times }1)\colon A_1^{\otimes n} \rightarrow A_n$ defined for every $n \ge 2$. We extend this definition to any $n \ge 0$ by defining $\nabla_1=\mathsf{id}_{A_1}\colon A_1 \rightarrow A_1$ and $\nabla=\eta\colon I \rightarrow A_0$. In the same way, we extend the definition of $\Delta_{1,\dots,1}~(n \text{ times }1)\colon A_n  \rightarrow A_1^{\otimes n}$ from $n \ge 2$ to any $n \ge 0$ by defining $\Delta_1=\mathsf{id}_{A_1}\colon A_1 \rightarrow A_1$ and $\Delta=\epsilon\colon A_0 \rightarrow I$.
\begin{lemma}
Let $(A_n)$ and $(B_n)$ be binomial bimonoids and let $(f_n)$ be a morphism of $\mathbb{N}$-graded bimonoids from $(A_n)$ to $(B_n)$. We have $f_1^{\otimes n};\nabla_{1,\dots,1}=\nabla_{1,\dots,1};f_n$ ($n$ times $1$ in $\nabla_{1,\dots,1}$).
\end{lemma}
\begin{proof}
If $n=0$, we must prove that $\mathsf{id}_I;\eta=\eta;f_1$, that is, $\eta;f_1=\eta$, but this is exactly the unitality equation for an $\mathbb{N}$-graded monoid. If $n=1$, we have $f_1;\nabla_1=f_1;\mathsf{id}_{B_1}=f_1=\mathsf{id}_{A_1};f_1=\nabla_1;f_1$. Suppose the result is true for some $n \ge 1$. By \cref{higher-order-mult-ind}, we then have
\begin{align*}
f_1^{\otimes (n+1)};\nabla_{1,\dots,1}~(n+1 \text{ times }1)=&~f_1^{\otimes (n+1)};\nabla_{1,\dots,1}~(n \text{ times }1)  \otimes \mathsf{id}_{A_1};\nabla_{n,1} \\
=&~(f_1^{\otimes n};\nabla_{1,\dots,1}~(n \text{ times }1)) \otimes f_1;\nabla_{n,1} \\
=&~(\nabla_{1,\dots,1}~(n \text{ times }1);f_n) \otimes f_1;\nabla_{n,1} \\
=&~\nabla_{1,\dots,1}~(n \text{ times }1) \otimes \mathsf{id}_{A_1};f_n \otimes f_1;\nabla_{n,1} \\
=&~\nabla_{1,\dots,1}~(n \text{ times }1) \otimes \mathsf{id}_{A_1};\nabla_{n,1};f_{n+1} \\
=&~\nabla_{1,\dots,1}~(n+1 \text{ times }1);f_{n+1}.
\end{align*}
We conclude that the result is true for every $n \ge 0$.
\end{proof}
\begin{remark}
The dual result gives $\Delta_{1,\dots,1};f_1^{\otimes n}=f_n;\Delta_{1,\dots,1}$ ($n$ times $1$ in $\Delta_{1,\dots,1}$) for every $n \ge 0$.
\end{remark}
\begin{corollary} \label{lem-fun-mor-1}
Let $(A_n)$ and $(B_n)$ be binomial bimonoids and let $(f_n)$ be a morphism of binomial bimonoids from $(A_n)$ to $(B_n)$. Then $(f_n)$ is a morphism of permutation splittings from $F_1((A_n))$ to $F_1((B_n))$.
\end{corollary}
\begin{proof}
We compute:
\begin{equation*}
f_1^{\otimes n};r_n=f_1^{\otimes n};\nabla_{1,\dots,1}
=\nabla_{1,\dots,1};f_n
=r_n;f_n
\end{equation*}
and
\begin{equation*}
s_n;f_1^{\otimes n}=\Delta_{1,\dots,1};f_1^{\otimes n}
=f_n;\Delta_{1,\dots,1}
=f_n;s_n.\qedhere
\end{equation*}
\end{proof}
\begin{lemma} \label{lem-fun-mor-2}
Let $(A_n)$ and $(B_n)$ be permutation splittings and let $(f_n)$ be a morphism of permutation splittings from $(A_n)$ to $(B_n)$. Then $(f_n)$ is a morphism of binomial bimonoids from $G_1((A_n))$ to $G_1((B_n))$.
\end{lemma}
\begin{proof}
We compute:
\begin{equation*}
\eta;f_0=r_0;f_0
=f_1^{\otimes 0};r_0
=\mathsf{id}_I;r_0
=r_0
=\eta,
\end{equation*}
\begin{equation*}
f_0;\epsilon=f_0;s_0
=s_0;f_1^{\otimes 0}
=s_0;\mathsf{id}_I
=s_0
=\epsilon,
\end{equation*}
\begin{align*}
f_n \otimes f_p;\nabla_{n,p}=&~f_n \otimes f_p;s_n \otimes s_p;r_{n+p} \\
=&~(f_n;s_n) \otimes (f_p;s_p);r_{n+p} \\
=&~(s_n;f_1^{\otimes n}) \otimes (s_p;f_1^{\otimes p});r_{n+p} \\
=&~s_n \otimes s_p;f_1^{\otimes (n+p)};r_{n+p} \\
=&~s_n \otimes s_p;r_{n+p};f_{n+p} \\
=&~\nabla_{n,p};f_{n+p},
\end{align*}
\begin{align*}
\Delta_{n,p};f_n \otimes f_p=&~\binom{n+p}{n}s_{n+p};r_n \otimes r_p;f_n \otimes f_p \\
=&~\binom{n+p}{n}s_{n+p};(r_n;f_n) \otimes (r_p;f_p) \\
=&~\binom{n+p}{n}s_{n+p};(f_1^{\otimes n};r_n) \otimes (f_1^{\otimes p};r_p) \\
=&~\binom{n+p}{n}s_{n+p};f_1^{\otimes (n+p)};r_n \otimes r_p \\
=&~\binom{n+p}{n};f_{n+p};s_{n+p};r_n \otimes r_p \\
=&~f_{n+p};\Delta_{n,p}.\qedhere
\end{align*}
\end{proof}
From \cref{lem-fun-mor-1,lem-fun-mor-2}, we can define the functors $F_1$ and $G_1$ to act as the identity on morphisms. If $(A_n)$ and $(B_n)$ are permutation splittings, and $(f_n)$ is a morphism of permutation splittings from $(A_n)$ to $(B_n)$, then we define $F_2((f_n))=f_1$. The functors $F_3$ and $G_3$ act as the identity on morphisms. We still have to define the functor $G_2$ on morphisms.
\begin{lemma} \label{lem-fun-mor-3}
Let $(A_n)$ and $(B_n)$ be permutation splittings and let $f_1\colon A_1 \rightarrow B_1$ be any morphism. Define $f_n\colon A_n \rightarrow B_n$ for every $n \in \mathbb{N} \backslash \{1\}$ by the formula
\begin{equation} \label{ext-f1}
f_n=s_n;f_1^{\otimes n};r_n\colon A_n \rightarrow B_n.
\end{equation}
Then, \cref{ext-f1} holds for any $n \in \mathbb{N}$. Moreover, $(f_n)$ is a morphism of permutation splittings from $(A_n)$ to $(B_n)$. 
\end{lemma}
\begin{proof}
Suppose that $f_n$ is defined for every $n \in \mathbb{N} \backslash \{1\}$ by \cref{ext-f1}. Then \cref{ext-f1} also holds for $n=1$ since $s_1=r_1=\mathsf{id}_{A_1}$. We now prove that $(f_n)$ is a morphism of permutation splittings. For every $n \in \mathbb{N}$, we have
\begin{align*}
f_1^{\otimes n};r_n=&~\frac{1}{n!}n!(f_1^{\otimes n};r_n) \\
=&~\frac{1}{n!}\underset{\sigma \in S_n}{\sum}(f_1^{\otimes n};r_n) \\
=&~\frac{1}{n!}\underset{\sigma \in S_n}{\sum}(f_1^{\otimes n};\sigma;r_n) \\
=&~\frac{1}{n!}\underset{\sigma \in S_n}{\sum}(\sigma;f_1^{\otimes n};r_n) \\
=&~(\frac{1}{n!}\underset{\sigma \in S_n}{\sum}\sigma);f_1^{\otimes n};r_n \\
=&~(r_n;s_n);f_1^{\otimes n};r_n \\
=&~r_n;(s_n;f_1^{\otimes n};r_n) \\
=&~r_n;f_n
\end{align*}
and
\begin{align*}
s_n;f_1^{\otimes n}=&~\frac{1}{n!}n!(s_n;f_1^{\otimes n}) \\
=&~\frac{1}{n!}\underset{\sigma \in S_n}{\sum}(s_n;f_1^{\otimes n}) \\
=&~\frac{1}{n!}\underset{\sigma \in S_n}{\sum}(s_n;\sigma;f_1^{\otimes n}) \\
=&~\frac{1}{n!}\underset{\sigma \in S_n}{\sum}(s_n;f_1^{\otimes n};\sigma) \\
=&~s_n;f_1^{\otimes n};(\frac{1}{n!}\underset{\sigma \in S_n}{\sum}\sigma) \\
=&~s_n;f_1^{\otimes n};(r_n;s_n) \\
=&~(s_n;f_1^{\otimes n};r_n);s_n \\
=&~f_n;s_n.\qedhere
\end{align*}
\end{proof}
From \cref{lem-fun-mor-3}, if $(A_n)$ and $(B_n)$ are permutation splittings and $f_1\colon A_1 \rightarrow B_1$ is any morphism, we can define $G_2(f_1)=(f_n)$ where $f_n$ is given by \cref{ext-f1} for any $n \in \mathbb{N}$.

At this point, we have completely specified the actions of the functors $F_i$ and $G_i$ both on objects and on morphisms. It is fairly immediate from the definitions that the $F_i$ and $G_i$ preserve identities and composition. The only nontrivial case is the preservation of composition by $G_2$ which follows from a straightforward calculation.\footnote{Let $(A_n)$, $(B_n)$, $(C_n)$ be two permutation splittings and let $f_1\colon A_1 \rightarrow B_1$, $g_1\colon B_1 \rightarrow C_1$ be any morphisms. We compute
\begin{align*}
G_2(f_1);G_2(g_1)=&~s_n;f_1^{\otimes n};r_n;s_n;g_1^{\otimes n};r_n \\
=&~s_n;f_1^{\otimes n};\frac{1}{n!}\underset{\sigma \in S_n}{\sum}\sigma;g_1^{\otimes n};r_n \\
=&~\frac{1}{n!}\underset{\sigma \in S_n}{\sum}s_n;f_1^{\otimes n};\sigma;g_1^{\otimes n};r_n \\
=&~\frac{1}{n!}\underset{\sigma \in S_n}{\sum}s_n;\sigma;f_1^{\otimes n};g_1^{\otimes n};r_n \\
=&~\frac{1}{n!}\underset{\sigma \in S_n}{\sum}s_n;f_1^{\otimes n};g_1^{\otimes n};r_n \\
=&~\frac{1}{n!}(n!)s_n;f_1^{\otimes n};g_1^{\otimes n};r_n \\
=&~s_n;(f_1;g_1)^{\otimes n};r_n \\
=&~G_2(f_1;g_1).
\end{align*}}
 Hence, for each $i \in [1,3]$, the assignments $F_i$ and $G_i$ indeed define functors. We also know that the functors $F_i$ and $G_i$ realize bijections on objects. To prove that these functors are isomorphisms with $F_i^{-1}=G_i$, it only remains to prove that they realize bijections on morphisms. The functors $F_i$ and $G_i$ act as the identity on morphisms for $i \in \{1,3\}$. Moreover, if we start with a morphism $f_1$ in $\mathsf{PermSplit}_1(\mathsf{C})$, then it is immediate from \cref{lem-fun-mor-3}, that $F_2(G_2(f_1))=f_1$.
\begin{lemma}
Let $(A_n)$ and $(B_n)$ be permutation splittings and let $(f_n)$ be a morphism of permutation splittings from $(A_n)$ to $(B_n)$. Then \cref{ext-f1} holds for every $n \in \mathbb{N}$.
\end{lemma}
\begin{proof}
For every $n \in \mathbb{N}$, we have
\begin{align*}
s_n;f_1^{\otimes n};r_n=&~f_n;s_n;r_n \\
=&~f_n;\mathsf{id}_{A_n} \\
=&~f_n.\qedhere
\end{align*}
\end{proof}
We thus have $G_2(F_2((f_n)))=(f_n)$. We have completely proved the following.
\begin{theorem}
The functors $F_i$ and $G_i$ for $i \in [1,3]$ are isomorphisms with $F_i^{-1}=G_i$ for any $i \in [1,3]$.
\end{theorem}
\subsection{Property in a $\mathbb{Q}_{\ge 0}$-linear symmetric monoidal category} \label{SUBSEC:FIVE-TWO}
As promised in the introduction, this subsection shows that being a binomial bimonoid in a $\mathbb{Q}_{\ge 0}$-linear symmetric monoidal category is a property.
\begin{proposition} \label{prop-which-gives-the-iso}
Let $(A_n)$ and $(B_n)$ be binomial bimonoids in a $\mathbb{Q}_{\ge 0}$-linear symmetric monoidal category $(\mathsf{C},\otimes,I)$. Suppose that $A_1$ is isomorphic to $B_1$ in $\mathsf{C}$. Then $(A_n)$ and $(B_n)$ are isomorphic as binomial bimonoids. 
\end{proposition}
\begin{proof}
Let $f_1\colon A_1 \simeq B_1$ be an isomorphism in $\mathsf{C}$. We obtain an isomorphism $f_1\colon (A_n) \simeq (B_n)$ in $\mathsf{BinBimon}_1(\mathsf{C})$. We finally obtain an isomorphism $G_2(f_1)\colon(A_n) \simeq (B_n)$ in $\mathsf{BinBimon}(\mathsf{C})$.
\end{proof}
\begin{corollary} \label{cor-property}
Let $(A_n)$ and $(B_n)$ be binomial bimonoids in a $\mathbb{Q}_{\ge 0}$-linear symmetric monoidal category $(\mathsf{C},\otimes,I)$. Suppose the graded object $(A_n)$ is isomorphic to $(B_n)$ in $\mathsf{C}^\mathbb{N}$. Then $(A_n)$ and $(B_n)$ are isomorphic as binomial bimonoids. 
\end{corollary}
\begin{proof}
If $(f_n)\colon (A_n) \simeq (B_n)$ is an isomorphism in $\mathsf{C}^\mathbb{N}$, we obtain an isomorphism $f_1\colon B_1 \simeq B_1$ in $\mathsf{C}$. We then apply \cref{prop-which-gives-the-iso}.
\end{proof}
\subsection{Structure in an additive symmetric monoidal category} \label{SUBSEC:FIVE-THREE}
We will show in this subsection that \cref{cor-property} does not extend to additive symmetric monoidal categories. Being a binomial bimonoid in an arbitrary additive symmetric monoidal category is thus a structure rather than a property. Let $k$ be a field of characteristic $p>0$.

We will exhibit a binomial bimonoid $(k_n\langle x \rangle)$ in $\mathsf{Vec}_k$ such that $(k_n\langle x \rangle)$ is isomorphic to $(k_n[x])$ in $\mathsf{Vec}_k^\mathbb{N}$ but $(k_n\langle x \rangle)$ is not isomorphic to $(k_n[x])$ as a binomial bimonoid. We define $x^{[0]}=1$ and for every $n \in \mathbb{N} \backslash \{1\}$, we suppose given a symbol $x^{[n]} \neq 0$. We define $k_n\langle x\rangle$ as the $k$-vector space of all the expressions of the form $a_nx^{[n]}$ where $a_n \in k$. An element $a_nx^{[n]} \in k_n\langle x^n\rangle$ is called an \emph{homogeneous divided power polynomial of degree} $n$.

We will make $(k_n\langle x\rangle)$ into a binomial bimonoid by equipping it with the following $k$-linear maps:
\begin{equation*}
\nabla_{n,p}\colon
\begin{aligned}
k_n\langle x\rangle \otimes k_p\langle x \rangle \longrightarrow &~k_{n+p}\langle x\rangle \\
x^{[n]} \otimes x^{[p]} \longmapsto &~\binom{n+p}{n}x^{[n+p]}, \\
\end{aligned}
\qquad\qquad
\eta\colon
\begin{aligned}
k \longrightarrow &~k_0\langle x\rangle \\
1 \longmapsto &~x^{[0]}, \\
\end{aligned}
\end{equation*}
\begin{equation*}
\Delta_{n,p}\colon
\begin{aligned}
k_{n+p}\langle x\rangle \longrightarrow &~k_n\langle x\rangle \otimes k_p\langle x\rangle  \\
x^{[n+p]} \longmapsto &~x^{[n]} \otimes x^{[p]}, \\
\end{aligned}
\qquad\qquad
\epsilon\colon
\begin{aligned}
k_0\langle x\rangle \longrightarrow &~k \\
x^{[0]} \longmapsto &~1. \\
\end{aligned}
\end{equation*}
All the axioms in \cref{fig:bialg-axiom-nonred} must be satisfied. There are only two axioms which are not immediate. The first one is associativity, which follows easily from the identity
\begin{equation*}
\binom{n+p+q}{n}\binom{p+q}{p}=\frac{(n+p+q)!}{n!p!q!}=\binom{n+p}{n}\binom{n+p+q}{n+p}.
\end{equation*}
The second and most difficult axiom is \cref{compatibility-intro-graded-2}. The LHS in \cref{compatibility-intro-graded-2} applied to $x^{[n]} \otimes x^{[p]}$ gives 
\begin{equation} \label{eq-divided-1}
\binom{n+p}{n}x^{[q]} \otimes x^{[r]}
\end{equation}
whereas the RHS applied to $x^{[n]} \otimes x^{[p]}$ gives
\begin{equation} \label{eq-divided-2}
\underset{\substack{a,b,c,d \ge 0 \\ a+b=n \\ c+d=p \\ a+c=q \\ b+d=r}}{\sum}\binom{q}{a}\binom{r}{b}x^{[q]} \otimes x^{[r]}.
\end{equation}
If $(a,b) \in \mathbb{N}^2$ is a couple such that $a+b=n$ and for some $c,d \ge 0$, we have $c+d=p$, $a+c=q$ and $b+d=r$, then there exists a unique such couple $(c,d)$, which is equal to $(q-a,r-b)$. It follows that the sum (\ref{eq-divided-2}) is equal to
\begin{equation} \label{eq-divided-3}
\underset{(a,b) \in S}{\sum}\binom{q}{a}\binom{r}{b}x^{[q]} \otimes x^{[r]}
\end{equation}
where
\begin{equation*}
S=\{(a,b)~|~\exists c,d \ge 0,~a+b=n,~c+d=p,~a+c=q,~b+d=r\}.
\end{equation*}
But it is easy to see by double inclusion that
\begin{equation*}
S=\{(a,b)~|~0 \le a \le q,~0 \le b \le r,~a+b=n\}.
\end{equation*}
We can then conclude by Vandermonde's identity (\ref{intro-before-binomial-3}) that the sum (\ref{eq-divided-3}) is equal to the sum (\ref{eq-divided-1}) which ends the proof of \cref{compatibility-intro-graded-2} for $(k_n\langle x \rangle)$. We have thus proved that the above operations make $(k_n\langle x \rangle)$ into a binomial bimonoid.

The graded objects $(k_n[x])$ and $(k_n\langle x \rangle)$ are isomorphic in $\mathsf{Vec}_k^\mathbb{N}$ since for every $n \in \mathbb{N}$, we have $k_n[x] \simeq k \simeq k_n\langle x \rangle$. Now, note that in $(k_n\langle x \rangle)$, we have 
\begin{equation*}
\nabla_{1,p-1}(x^{[1]} \otimes x^{[p-1]})=\binom{p}{1}x^{[p]}=px^{[p]}=0
\end{equation*}
so that $\nabla_{1,p-1}=0$.

But in $(k_n[x])$, the map $\nabla_{1,p-1}$ is not equal to $0$ since
\begin{equation*}
\nabla_{1,p-1}(x \otimes x^{p-1})=x^p \neq 0.
\end{equation*}

These two identites imply that the binomial $k$-bialgebras $(k_n[x])$ and $(k_n\langle x \rangle)$ are not isomorphic. Indeed, suppose that $(f_n)$ was an isomorphism of binomial $k$-bialgebras from $(k_n\langle x\rangle)$ to $k_n[x]$. Then we would have
\begin{equation*}
0=\nabla_{1,p-1};f_1 \otimes f_{p-1}=f_p;\nabla_{1,p-1}
\end{equation*}
so that
\begin{equation*}
\nabla_{1,p-1}=f_p^{-1};0=0
\end{equation*}
in $(k_n[x])$ which is false. We conclude with the following proposition.
\begin{proposition}
Let $k$ be a field of positive characteristic. There exist two non-isomorphic binomial bimonoids in $(\mathsf{Vec}_k,\otimes,k)$ whose underlying graded $k$-vector spaces are isomorphic.
\end{proposition}
\section{Examples of binomial bimonoids from permutations splittings} \label{SEC:LAST}
In this section, we use \cref{main-theorem} to compute binomial graded bimonoids from permutation splittings in three $\mathbb{Q}_{\ge 0}$-linear symmetric monoidal categories: the category of modules over any commutative $\mathbb{Q}_{\ge 0}$-algebra, the category of sets and relations and the category of suplattices.

\paragraph{In the category of modules over a commutative $\mathbb{Q}_{\ge 0}$-algebra.}
Let $R$ be a commutative rig which is a $\mathbb{Q}_{\ge 0}$-algebra. Let $M$ be any $R$-module and consider the permutation splitting from \cref{ex:1b} whose underlying graded object is $(S^nM)_{n \ge 0}$. Using \cref{main-theorem}, we obtain a binomial bimonoid with same underlying graded object. The multiplication and comultiplication maps are given by
\begin{equation*}
\nabla_{n,p}=s_{n} \otimes s_{p}; r_{n+p}
\end{equation*}
and 
\begin{equation*}
\Delta_{n,p}=\binom{n+p}{n}s_{n+p};r_{n} \otimes r_{p}.
\end{equation*}
We compute
\begin{align*}
\nabla_{n,p}( (x_1 \otimes_s \cdots \otimes_s x_n) \otimes (y_1 \otimes_s \cdots \otimes_s y_p) )
=&~ r_{n+p}\bigg(\frac{1}{n!p!}\underset{\substack{\sigma \in S_{n} \\ \tau \in S_{p}}}{\sum} x_{\sigma(1)} \otimes \cdots \otimes x_{\sigma(n)} \otimes y_{\tau(1)} \otimes \cdots \otimes y_{\tau(p)}\bigg) \\
=&~ \frac{1}{n!p!}\underset{\substack{\sigma \in S_{n} \\ \tau \in S_{p}}}{\sum} x_{\sigma(1)} \otimes_s \cdots \otimes_s x_{\sigma(n)} \otimes_s y_{\tau(1)} \otimes_s \cdots \otimes y_{\tau(p)} \\ 
=&~\frac{1}{n!p!} (n!p!) x_1 \otimes_s \cdots \otimes_s x_n \otimes_s y_1 \otimes_s \cdots \otimes y_p \\ 
=&~x_1 \otimes_s \cdots \otimes_s x_n \otimes_s y_1 \otimes_s \cdots \otimes y_p
\end{align*}
and (see the footnote for the details of third step)\footnote{Note that given $x_{1},\ldots,x_{n+p} \in M$ the value of $x_{\tau(1)} \otimes_s \cdots \otimes_s x_{\tau(n)}$ does not depend on $\tau \in S_n$. It thus makes sense to define $x_{A} := x_{a_1} \otimes_{s} \cdots \otimes_{s} x_{a_n}$ for any subset $A=\{a_1,\ldots,a_n\} \subseteq [1,n+p]$ with $n$ elements. Given such a subset $A$, there are exactly $n!p!$ permutations $\sigma \in S_{n+p}$ such that $A=\{\sigma(1),\ldots,\sigma(n)\}$ and $[1,n+p] \backslash A = \{\sigma(n+1),\ldots,\sigma(n+p)\}$. Indeed, we know that each element among $1,\ldots,n$ must be sent to some distinct value in $A$. There are $n!$ ways to realize this. Each element in $n+1,\ldots,n+p$ must be sent to some distinct value in $[1,n+p] \backslash A$. There are $p!$ ways to realize this. 

Moreover, given two distinct subsets $A,B \subseteq [1,n+p]$ each with $n$ elements, the two sets of $n!p!$ permutations in $S_{n+p}$ associated respectively to $A$ and to $B$ are disjoint because these permutations are respectively such that $\{\sigma(1),\ldots,\sigma(n)\} = A$ or such that $\{\sigma(1),\ldots,\sigma(n)\} = B$. We thus have shown that $S_{n+p}$ is partitioned into $\binom{n+p}{n}$ sets of $n!p!$ permutations, each of these sets being the set $S_{n+p,A}$ of permutations $\sigma \in S_{n+p}$ such that $\{\sigma(1),\ldots,\sigma(n)\} = A$ for some subset $A \subseteq [1,n+p]$ with $n$ elements.}
\begin{align*}
\Delta_{n,p}(x_1 \otimes_s \cdots \otimes_s x_{n+p})
=&~\binom{n+p}{n} (r_n \otimes r_p)\bigg(\frac{1}{(n+p)!} \underset{\sigma \in S_{n+p}}{\sum} x_{\sigma(1)} \otimes \cdots \otimes x_{\sigma(n+p)}\bigg) \\
=&~\frac{1}{n!p!}\underset{\sigma \in S_{n+p}}{\sum}(x_{\sigma(1)} \otimes_s \cdots \otimes_s x_{\sigma(n)}) \otimes (x_{\sigma(n+1)} \otimes_s \cdots \otimes_s x_{\sigma(n+p)}) \\ 
=&~\frac{1}{n!p!} \underset{A \in \mathcal{P}_n([1,n+p])}{\sum}~\underset{\sigma \in S_{n+p,A}}{\sum} x_{A} \otimes x_{[1,\ldots,n+p] \backslash A}\\
=&~\frac{1}{n!p!} \underset{A \in \mathcal{P}_n([1,n+p])}{\sum}(n!p!) x_{A} \otimes x_{[1,\ldots,n+p] \backslash A}\\
=&~\underset{A \in \mathcal{P}_n([1,n+p])}{\sum} x_{A} \otimes x_{[1,\ldots,n+p] \backslash A}.
\end{align*}
Moreover, the unit and counit maps are given by
\begin{equation*}
\eta=r_0=\mathsf{id}_R\colon R \rightarrow S^0M=R
\end{equation*}
and
\begin{equation*}
\epsilon=s_0=\mathsf{id}_R\colon S^0M=R \rightarrow R.
\end{equation*}
\paragraph{In the category of sets and relations.}
Let $X$ be any set and consider the permutation splitting in $(\mathsf{Rel},\times,*)$ from \cref{ex:2b} whose underlying graded object is $(\mathcal{M}_nX)_{n \ge 0}$.
We compute the binomial bimonoid obtained from \cref{main-theorem}:
\begin{align*}
&~\nabla_{n,p} \\
=&~s_{n} \times s_{p} ; r_{n+p} \\
=&~\Big(\{([x_1,\ldots,x_n], (x_{\sigma(1)},\ldots,x_{\sigma(n)})),~x_1,\ldots,x_n \in X, \sigma \in S_n \}  \\
&\times \{([y_1,\ldots,y_p], (y_{\tau(1)},\ldots,y_{\tau(p)})),~y_1,\ldots,y_p \in X, \tau \in S_p \}\Big); \\
&\{ ((w_1,\ldots,w_{n+p}),[w_1,\ldots,w_{n+p}]),~w_1,\ldots,w_{n+p} \in X \} \\
=&~\{ (([x_1,\ldots,x_n],[y_1,\ldots,y_p]), (x_{\sigma(1)},\ldots,x_{\sigma(n)},y_{\tau(1)},\ldots,y_{\tau(p)})) ,~ \\
&x_1,\ldots,x_n,y_1,\ldots,y_p \in X, \sigma \in S_n, \tau \in S_p\};\{ ((w_1,\ldots,w_{n+p}),[w_1,\ldots,w_{n+p}]),~w_1,\ldots,w_{n+p} \in X \} \\
=&~\{ (([x_1,\ldots,x_n],[y_1,\ldots,y_p]), [x_1,\ldots,x_n,y_1,\ldots,y_p]),~x_1,\ldots,x_n,y_1,\ldots,y_{n+p} \in X \}.
\end{align*}

In words: $\nabla_{n,p}$ relates a bag of $n$ elements and a bag of $p$ elements to the bag of all these $n+p$ elements.  We also define

\begin{align*}
\Delta_{n,p}=&~ \binom{n+p}{n} s_{n+p};r_n \times r_p \\
=&~s_{n+p};r_n \times r_p \\
=&~ \{ ([x_1,\ldots,x_{n+p}],(x_{\sigma(1)},\ldots,x_{\sigma(n+p)})),~x_1,\ldots,x_{n+p} \in X,
 \sigma \in S_{n+p} \}; \\
&\{ ((y_1,\ldots,y_n),[y_1,\ldots,y_n]),~y_1,\ldots,y_n \in X \} \times \{ ((w_1,\ldots,w_p),[w_1,\ldots,w_p]),~w_1,\ldots,w_p \in X \} \\
=&~\{ ([x_1,\ldots,x_{n+p}], (x_{\sigma(1)},\ldots,x_{\sigma(n+p)})),~x_1,\ldots,x_{n+p} \in X, \sigma \in S_{n+p} \}; \\
&\{ ((y_1,\ldots,y_n,w_1,\ldots,w_p),([y_1,\ldots,y_n],[w_1,\ldots,w_p])),~y_1,\ldots,y_n,w_1,\ldots,w_p \in X \} \\
=&~\{ ([x_1,\ldots,x_{n+p}], ([x_{\sigma(1)},\ldots,  x_{\sigma(n)}],[x_{\sigma(n+1)},\ldots,x_{\sigma(n+p)}])),~x_1,\ldots,x_{n+p} \in X, \sigma \in S_{n+p}\}.
\end{align*}

In words: $\Delta_{n,p}$ relates a bag of $n+p$ elements to all the pairs of a bag of $n$ elements and a bag of $p$ elements obtained by dividing the bag of $n+p$ elements into two bags.

The unit and counit maps are identities:
\begin{equation*}
\eta=r_0=\mathsf{id}_*\colon *\rightarrow \mathcal{M}_0(X)=*
\end{equation*}
and
\begin{equation*}
\epsilon=s_0=\mathsf{id}_*\colon\mathcal{M}_0(X)=* \rightarrow *.
\end{equation*}
\paragraph{In the category of suplattices.} Let $X$ be a suplattice and consider the permutation splitting in $(\mathsf{Sup},\otimes,\mathcal{P}(*))$ from \cref{ex:3b} whose underlying graded object is $(S^nX)_{n \ge 0}$. We can compute the binomial bimonoid obtained from \cref{main-theorem}. The computations are essentially the same as in the case of $(\mathsf{Mod}_R,\otimes,R)$ for $R$ a commutative $\mathbb{Q}_{\ge 0}$-algebra.

We obtain that
\begin{equation*}
\nabla_{n,p}( (x_1 \otimes_s \cdots \otimes_s x_n) \otimes (y_1 \otimes_s \cdots \otimes_s y_p) )=x_1 \otimes_s \cdots \otimes_s x_n \otimes_s y_1 \otimes_s \cdots \otimes_s y_p
\end{equation*}
and
\begin{equation*}
\Delta_{n,p}(x_1 \otimes_s \cdots \otimes_s x_{n+p})=\underset{A \in \mathcal{P}_n([1,n+p])}{\bigvee} x_{A} \otimes x_{[1,\ldots,n+p] \backslash A}.
\end{equation*}
The unit and counit maps are identities:
\begin{equation*}
\eta=\mathsf{id}_{\mathcal{P}(*)}\colon \mathcal{P}(*) \rightarrow S^0M=\mathcal{P}(*)
\end{equation*}
and
\begin{equation*}
\epsilon=\mathsf{id}_\mathcal{P}(*)\colon S^0M=\mathcal{P}(*) \rightarrow \mathcal{P}(*).
\end{equation*}
\bibliographystyle{plainurl}
\bibliography{biblioyet-another-sym}
\appendix
\section{The initial pointed $\mathbb{Q}_{\ge 0}$-linear symmetric strict monoidal category} \label{app:initial}
We will define in this appendix the initial object $((\mathcal{I},\otimes,0),1)$ in the category of pointed $\mathbb{Q}_{\ge 0}$-linear symmetric strict monoidal categories.

The objects of the category $\mathcal{I}$ are the nonnegative integers. For all $n,p \in \mathbb{N}$, we set 
\begin{equation*}
\mathcal{I}(n,p)=
\left\{
\begin{aligned}
\{0\} &~\text{if }n\neq p, \\
\mathbb{Q}_{\ge 0}[S_n] &~\text{if }n=p
\end{aligned}
\right.
\end{equation*}
where $\mathbb{Q}_{\ge 0}[S_n]$ is the group $\mathbb{Q}_{\ge 0}$-algebra of $S_n$. The elements of $\mathbb{Q}_{\ge 0}[S_n]$ are formal linear combinations
\begin{equation*}
\underset{\sigma \in S_n}{\sum}\lambda_\sigma\sigma
\end{equation*}
where $\lambda_\sigma \in \mathbb{Q}_{\ge 0}$ for every $\sigma \in S_n$. It follows that $\mathbb{Q}_{\ge 0}[S_n]$ is a $\mathbb{Q}_{\ge 0}$-module. We will make $\mathbb{Q}_{\ge 0}[S_n]$ into a $\mathbb{Q}_{\ge 0}$-algebra by defining a multiplication on $\mathbb{Q}_{\ge 0}[S_n]$. This multiplication is defined by the formula
\begin{equation*}
\Big(\underset{\sigma \in S_n}{\sum}\lambda_\sigma\sigma\Big)\Big(\underset{\rho \in S_n}{\sum}\mu_\rho\rho\Big)=\underset{(\lambda,\sigma) \in {S_n}^2}{\sum}\lambda_\sigma\mu_\rho(\sigma;\rho).
\end{equation*}
The multiplicative unit is $1 \in S_n$. The composition map
\begin{equation*}
\begin{aligned}
\mathcal{I}(n,p) \times \mathcal{I}(p,q) &\longrightarrow \mathcal{I}(n,q) \\
(f,g) &\longmapsto f;g
\end{aligned}
\end{equation*}
is then defined as follows:
\begin{equation*}
f;g=
\left\{
\begin{aligned}
0 &~\text{if }n\neq p \text{ or }p \neq q, \\
fg &~\text{if }n=p=q.
\end{aligned}
\right.
\end{equation*}
The identity on $n \in \mathbb{N}$ is the multiplicative unit $1 \in S_n$.

We still have to define the symmetric monoidal structure on $\mathcal{I}$. The monoidal unit is $0 \in \mathbb{N}$ and the tensor product of objects is given by $n \otimes p:=n+p$. If $\sigma \in S_n$ and $\rho \in S_p$, we define a new permutation $\sigma \otimes \rho \in S_{n+p}$ by the formula
\begin{equation*}
(\sigma \otimes \rho)(k)=
\left\{
\begin{aligned}
\sigma(k) &~\text{if } 1 \le k \le n, \\
\rho(k-n)+n &~\text{if } n+1 \le k \le n+p. 
\end{aligned}
\right.
\end{equation*}
We extend this operation to a map
\begin{equation*}
\begin{aligned}
\mathbb{Q}_{\ge 0}[S_n] \times \mathbb{Q}_{\ge 0}[S_p] &\longrightarrow \mathbb{Q}_{\ge 0}[S_{n+p}] \\
\Big(\underset{\sigma \in S_n}{\sum}\lambda_\sigma\sigma,\underset{\rho \in S_p}{\sum}\mu_\rho\rho\Big) &\longmapsto f \otimes g:=\underset{(\lambda,\sigma) \in S_n \times S_p}{\sum}\lambda_\sigma\mu_\rho(\sigma \otimes \rho).
\end{aligned}
\end{equation*}
We also define
\begin{equation*}
f \otimes 0=0 \otimes f:=0 \in \mathcal{I}(n+q,p+r)
\end{equation*}
for all $n,p \ge 0$, $f \in \mathcal{I}(n,p)$ and $0 \in \mathcal{I}(q,r)$ where $q \neq r$. Finally, the exchange $\gamma_{n,p} \in \mathcal{I}(n+p,n+p)$ is given by
\begin{equation*}
\gamma_{n,p}(k)=
\left\{
\begin{aligned}
k+p &~\text{if }1 \le k \le n, \\
k-n &~\text{if }n+1 \le k \le n+p.
\end{aligned}
\right.
\end{equation*}
Checking that $(\mathcal{I},\otimes,0)$ satisfies the appropriate equations to be a $\mathbb{Q}_{\ge 0}$-linear symmetric strict monoidal category is then straightforward.
\begin{definition}
Let $(\mathsf{C},\otimes,I)$ be a $\mathbb{Q}_{\ge 0}$-linear symmetric strict monoidal category and let $(\mathsf{D},\otimes,I)$ be a $\mathbb{Q}_{\ge 0}$-linear symmetric monoidal category. A \emph{$\mathbb{Q}_{\ge 0}$-linear symmetric strong monoidal functor} is a functor $F\colon\mathsf{C} \rightarrow \mathsf{D}$ together with a natural isomorphism
\begin{equation*}
\mu_{A,B}\colon F(A) \otimes F(B) \simeq F(A \otimes B)
\end{equation*}
and an isomorphism
\begin{equation*}
u\colon I \simeq F(I)
\end{equation*}
such that the function
\begin{align*}
\mathsf{C}(A,B) &\longrightarrow \mathsf{D}(F(A),F(B)) \\
f &\longmapsto F(f)
\end{align*}
is $\mathbb{Q}_{\ge 0}$-linear for all $A,B \in \mathsf{C}$, and the following diagrams commute:
\begin{equation*}
\begin{tikzcd}
(F(A) \otimes F(B)) \otimes F(C) \arrow[dd, "\mu \otimes \mathsf{id}"'] \arrow[rr, "\alpha"] &  & F(A) \otimes (F(B) \otimes F(C)) \arrow[d, "\mathsf{id} \otimes \mu"] \\
                                                                                             &  & F(A) \otimes F(B \otimes C) \arrow[d, "\mu"]                          \\
F(A \otimes B) \otimes F(C) \arrow[rr, "\mu"']                                               &  & F(A \otimes B \otimes C)~,                                             
\end{tikzcd}
\end{equation*}
\begin{equation*}
\begin{tikzcd}
I \otimes F(A) \arrow[rd, "\lambda"'] \arrow[r, "u \otimes \mathsf{id}"] & F(I) \otimes F(A) \arrow[d, "\mu"] &  & F(A) \otimes I \arrow[rd, "\rho"'] \arrow[r, "\mathsf{id} \otimes u"] & F(A) \otimes F(I) \arrow[d, "\mu"] \\
                                                                         & F(A)~,                              &  &                                                                       & F(A)~,                            
\end{tikzcd}
\end{equation*}
\begin{equation*}
\begin{tikzcd}
F(A) \otimes F(B) \arrow[r, "\mu"] \arrow[d, "\gamma"'] & F(A \otimes B) \arrow[d, "F(\gamma)"] \\
F(B) \otimes F(A) \arrow[r, "\mu"']                     & F(B \otimes A)~.                      
\end{tikzcd}
\end{equation*}
\end{definition}
We define a \emph{pointed $\mathbb{Q}_{\ge 0}$-linear symmetric strict monoidal category} as a couple $((\mathsf{C},\otimes,I),A)$ where $(\mathsf{C},\otimes,I)$ is a $\mathbb{Q}_{\ge 0}$-linear symmetric strict monoidal category and $A \in \mathsf{C}$. A \emph{morphism of pointed $\mathbb{Q}_{\ge 0}$-linear symmetric strict monoidal categories} from $((\mathsf{C},\otimes,I),A)$ to $((\mathsf{D},\otimes,I),B)$ is a strong monoidal functor $F\colon (\mathsf{C},\otimes,I) \rightarrow (\mathsf{D},\otimes,I)$ such that $F(A)=B$.
\begin{proposition} \label{cons-mac}
$((\mathcal{I},\otimes,0),1)$ is the initial object in the category of pointed $\mathbb{Q}_{\ge 0}$-linear symmetric strict monoidal categories.
\end{proposition} 
If we want to work with a non-strict monoidal category in the domain of our unique $\mathbb{Q}_{\ge 0}$-linear symmetric strong monoidal functor, we have the following proposition which is a consequence of MacLane's coherence theorem on symmetric monoidal categories.
\begin{proposition}
Let $(\mathsf{C},\otimes,I)$ be a $\mathbb{Q}_{\ge 0}$-linear symmetric monoidal category and let $A \in \mathsf{C}$. There exists a unique $\mathbb{Q}_{\ge 0}$-linear symmetric strong monoidal functor
\begin{equation*}
F\colon(\mathcal{I},\otimes,0) \rightarrow (\mathsf{C},\otimes,I)
\end{equation*}
such that $F(n)=A^{\otimes n}$ for every $n \ge 0$ where $A^{\otimes n}$ is defined by $A^{\otimes 0}:=I$, $A^{\otimes 1}:=A$ and $A^{\otimes (n+1)}:=A^{\otimes n} \otimes A$ for every $n \ge 1$.
\end{proposition}
\end{document}